%% file: arXiv.tex
\documentclass[a4paper]{article}
\usepackage[left=3.5cm,right=3.0cm,top=2.5cm,bottom=2.5cm]{geometry}
\usepackage{xcolor}
\usepackage{graphicx}
\usepackage{cite}
\usepackage{bm}
\usepackage{amsmath,amssymb,amscd,amsxtra,amsfonts}
\usepackage{lipsum}
\usepackage{amsfonts}
\usepackage{graphicx}
\usepackage{multirow}

\usepackage{array}
\usepackage{amssymb}
\usepackage{verbatim}

\usepackage{xcolor}

\usepackage{epstopdf}
\usepackage{amsmath,bm,stmaryrd}

\usepackage{amsthm}

\usepackage{mathtools}
\usepackage{indentfirst}

\newcommand{\DOmega}{\mathrm{\Omega}}

\newcommand\keywordsname{Key words}
\newcommand\AMSname{AMS subject classifications}

\newenvironment{@abssec}[1]
{\if@twocolumn
\section*{#1}%
\else
\vspace{.05in}\footnotesize
\parindent .2in
{\upshape\bfseries #1. }\ignorespaces
\fi}

{\if@twocolumn\else\par\vspace{.1in}\fi}

\newenvironment{keywords}{\begin{@abssec}{\keywordsname}}{\end{@abssec}}

\input{macros}

\newtheorem{theorem}{Theorem}[section]
\newtheorem{lemma}[theorem]{Lemma}%

\newtheorem{example}{Example}%
\newtheorem{remark}{Remark}%

\numberwithin{equation}{section}

\title{Parametric charge-conservative mixed finite element method for 3D incompressible inductionless MHD equations on curved domains}
\author{Xue Jiang \thanks{School of Mathematics, Statistics and Mechanics, Beijing University of Technology, Beijing 100124.}
\and
Lei Li
\thanks{School of Mathematics, Statistics and Mechanics, Beijing University of Technology, Beijing 100124.}
\and
Lingxiao Li \thanks{School of Mathematics and Statistics, Henan University, Kaifeng 475004.}}
\begin{document}
\date{}
\maketitle

\begin{abstract}
  This paper develops a charge-conservative mixed finite element method with optimal convergence rates for the stationary incompressible inductionless MHD equations on three-dimensional curved domains. The discretization employs the isoparametric Taylor-Hood elements with grad-div stabilization for the velocity-pressure pair, and parametric Brezzi-Douglas-Marini elements for the current density. For sufficiently small meshsize, the discrete inf-sup conditions for both the velocity-pressure and current density-electric potential finite element pairs are established on curved meshes. Utilizing the Piola's transformation, the discrete current density is exactly divergence-free. By employing suitable extensions and projections, optimal a priori error estimates are derived in both the energy norm and the $L^2$-norm. Numerical experiments are presented to confirm the theoretical results.
\end{abstract}

\begin{keywords}
Parametric mixed finite element method; MHD equations; curved boundary; charge-conservative; optimal error estimate.
\end{keywords}

\section{Introduction}
\label{intro}
Magnetohydrodynamics (MHD) equations describe the interaction between electrically conducting fluids and magnetic fields by coupling the fluid dynamics equations and Maxwell equations.
It plays a fundamental role in modeling a wide range of phenomena in plasma physics, astrophysics, and engineering applications involving liquid metals and conducting fluids \cite{pa01,jp19,rm90}. In some practical scenarios, the magnetic Reynolds number may be sufficiently small,
such that the magnetic field induced by the fluid motion can be neglected.
This approximation leads to the so-called inductionless MHD model \cite{gll06,lm89,ss13}.
When all physical quantities no longer exhibit significant temporal variations, the time-derivative term can be neglected, yielding the stationary inductionless MHD equations \cite{ch06,gmp91,pjs88,rp11}.

In this paper, we focus on the stationary incompressible inductionless MHD equations.
The unknowns in the model are the velocity $\Bu$, the current density $\BJ$, the pressure $p$, and the electric potential $\phi$, which satisfy
\begin{subequations}\label{eq:model_1st}
\begin{align}
 \rho\Bu\cdot\nabla\Bu - \nu\Delta\Bu + \nabla p - \BJ\times\BB = \Bf\quad  &\textrm{in}\;\; \DOmega,\label{eq:model_1st:u}\\
\sigma^{-1}\BJ + \nabla\phi - \Bu\times\BB = \Bg\quad    &\textrm{in}\;\;\DOmega,\label{eq:model_1st:J}\\
\Div{\Bu} = 0 \quad &\textrm{in}\;\;\DOmega,\label{eq:model_1st:p}\\
\Div{\BJ} = 0 \quad &\textrm{in}\;\;\DOmega,\label{eq:model_1st:phi}
\end{align}
\end{subequations}
with the following homogeneous boundary conditions for simplicity
\begin{subequations}
\label{eq:model_1st:init_bc}
\begin{align}
\Bu = \bm{0} \quad &\textrm{on}\;\; \partial\DOmega,\\
\phi = 0 \quad &\textrm{on}\;\; \partial\DOmega,
\end{align}
\end{subequations}
where $\DOmega\subset\mathbb{R}^3$ is a bounded Lipschitz domain with possible curved boundary, $\rho$ denotes the fluid density,
 $\nu$ the dynamic viscosity, $\sigma$ the electrical conductivity of fluid, $\BB$ the applied magnetic field, $\Bf$ and $\Bg$ are given source terms.

From the numerical perspective, the incompressible inductionless MHD system presents several intrinsic challenges.
The equations are nonlinear and involve strong multiphysics coupling among velocity, pressure, current density, and electric potential,
together with incompressibility and charge conservation constraints \cite{gll06,rp11}.
In particular, the current density is required to be divergence-free in the absence of internal charge sources,
a condition that is closely related to the physical principle of charge conservation and plays a crucial role in the stability and accuracy of numerical schemes \cite{hlmz18,jlmnr17,nmh1}.

Now let us review some relevant work for this topic. Planas et al. \cite{rp11} proposed a stabilized finite element method based on the variational multiscale framework.
By introducing suitable stabilization terms, their formulation allows equal-order interpolation for the velocity, pressure, current density, and electric potential. This approach circumvented the classical inf-sup conditions associated with the saddle-point problems and demonstrated good robustness in numerical experiments.
With the development of structure-preserving numerical methods, Ni et al. \cite{nmh1,nmh2} proposed collocated grid schemes for the numerical simulation of incompressible inductionless MHD flows. Through a carefully designed discrete coupling between Ohm's law and the electric potential equation, their methods ensure a divergence-free current density at the discrete level on both structured and unstructured meshes. Subsequently, Li et al.\cite{lnz19} proposed a charge-conservative finite element method that strictly ensures the divergence of the current density at the discrete level, and proved the existence of continuous solutions through convergence analysis.
John et al. \cite{jlmnr17} provided a comprehensive review of divergence constraints in mixed finite element methods for incompressible flows,
analyzing their impact on stability and error behavior.
Moreover, a series of works have been devoted to mixed finite element discretization,
nonlinear iterative strategies, and a priori error estimates for incompressible inductionless MHD system,
leading to a more systematic theoretical framework \cite{clsw10,ds04}.

Most of the aforementioned analyses are restricted to polygonal or polyhedral computational domains.
However, practical applications may also involve curved geometries, such as toroidal vacuum vessels in Tokamak devices, curved surfaces of aerospace vehicles,
or biological vascular networks \cite{fokd19,mn22,pbhn20}. Approximating the true geometry by piecewise linear boundaries generally introduces additional geometric errors, which may
deteriorate the overall convergence order and numerical accuracy.

To effectively address problems posed on curved domains, isoparametric finite element methods have been widely adopted \cite{ml86}.
The essential idea is to establish high-order polynomial  mappings between reference and computational elements,
such that the geometric approximation matches the finite element spaces in polynomial degree and then high-order accuracy on curved domains could be achieved \cite{MFEMbook,cpg02}.
In \cite{ml86}, Lenoir systematically investigated the influence of geometric approximation on finite element error estimates for elliptic equations,
clearly identifying the crucial role played by the order of the geometric mapping in the error analysis and laying the theoretical foundation for subsequent developments of
isoparametric finite element methods.
Still for the Poisson equation, Bertrand and Starke constructed parametric Raviart-Thomas elements by directly embedding the geometric mapping into the mixed finite element spaces, with which they provided a systematic analysis of stability and optimal convergence on 3D curved domains \cite{fg16}.
Furthermore, in electromagnetic simulations, studies have shown that geometric approximation errors significantly influence the error behavior of Maxwell equations on curved domains \cite{rc23}.
These findings offer valuable theoretical insights for the numerical analysis of the MHD system in our present work.

This paper is concerned with the numerical approximation of the stationary incompressible inductionless MHD equations on 3D curved domains.
While charge-conservative finite element methods for inductionless MHD have been previously developed on polyhedral domains \cite{lnz19},  their extension to curved geometries is not straightforward. In particular, unlike the polyhedral case, the construction of parametric spaces on curved meshes entails more than the standard isoparametric mapping of the physical coordinates. The transformation of basis functions from the reference element must be defined in accordance with the underlying function space, especially when essential physical constraints are to be preserved at the discrete level. The composition of such field-dependent transformations with the nonlinear geometric approximation of the curved boundary inherently couples the two mappings. This coupling significantly complicates the analysis of discrete inf-sup stability and the derivation of optimal approximation properties on curved meshes. To the best of our knowledge, a rigorous error analysis of charge-conservative mixed finite element methods for inductionless MHD equations on three-dimensional curved domains has not been established.

For the numerical approximation considered in this paper, we employ isoparametric Taylor-Hood elements for the velocity and pressure, parametric Brezzi-Douglas-Marini (BDM) elements for the current density, and a parametric discontinuous Galerkin finite element for the electric potential. The Piola's transformation \cite{fg16} ensures that the divergence-free constraint on the current density is preserved exactly on the discrete curved mesh. To enhance mass conservation, we augment the momentum equation with a grad-div stabilization term \cite{jlmnr17,lz17,or04}.
Following the analytical framework of Sch\"otzau \cite{ds04}, we establish the existence, uniqueness, and appropriate stability bounds of weak solutions. To derive optimal error estimates on curved domains, we adopt an approach that deviates from conventional discretizations based on straight-edged elements \cite{rv21}.
Specifically, we construct the curved computational domain $\DOmega_h$ via a continuous piecewise polynomial mapping $\BM_h$.
Under the uniform boundedness of the mappings and their Jacobians, we derive high-order estimates for the geometric consistency errors arising from the domain approximation. These estimates permit rigorous control of geometric errors and consequently yield optimal a priori estimates in the energy norm. Moreover, by employing a Stokes projection and exploiting an additional order of accuracy in the $H^{-1}$-norm, we establish optimal convergence rates in the $L^2$-norm.
The main novelties of our work are threefold:
\begin{itemize}
\item[(i)] The extension of charge-conservative mixed finite element methods to 3D curved domains for inductionless MHD.
\item[(ii)] The uniform discrete inf-sup stability is proven on curved meshes for both the parametric Taylor-Hood element and current density-electric potential finite element pair.
\item[(iii)] A systematic geometric error analysis is conducted that shows optimal convergence rates in both the energy and $L^2$ norms.
\end{itemize}

The rest of this paper is organized as follows.
In Section \ref{sec:model}, we introduce the continuous mixed variational formulation and establish the well-posedness of the problem.
Section \ref{sec:MFEM} presents the charge-conservative parametric mixed finite element method for the inductionless MHD equations,
where the discrete inf-sup condition is verified and stability bounds are derived.
The error analysis is conducted in Section \ref{sec:error}.
We firstly establish key lemmas addressing geometric approximation and projection errors.
Then we combine them with operator estimates to obtain an energy-norm error bound, and extend it via a Stokes projection to prove optimal $L^2$-norm error estimate.
Numerical experiments validating the theoretical results are provided in Section \ref{sec:exp}.
Finally, concluding remarks are given in Section \ref{sec:con}.

\section{Mathematical model}
\label{sec:model}
Throughout this paper, we adopt standard notation. Let $L^2(\DOmega)$ and $H^1(\DOmega)$ be the usual Lebesgue and Sobolev spaces, with $H_0^1(\DOmega)\subset H^1(\DOmega)$ representing the subspace of functions vanishing on $\partial\DOmega$. For vector fields, we use the spaces $\BH(\Div,\DOmega)$ of square-integrable vectors with square-integrable divergence, $\BL^2(\DOmega) = \left(L^2(\DOmega)\right)^3$, and its divergence-free subspace $\BH(\Div0,\DOmega) = \{\Bv\in\BH(\Div,\DOmega): \Div\Bv=0\}. $
For convenience, we introduce the following notation for the relevant function spaces, the velocity space
\begin{align*}
\BV = \BH_0^1(\DOmega),
\end{align*}
the pressure space
\begin{align*}
Q = L_0^2(\DOmega),
\end{align*}
the current density space
\begin{align*}
\BD = \BH(\Div,\DOmega),
\end{align*}
and the electric potential space
\begin{align*}
S = L^2(\DOmega).
\end{align*}
\subsection{Dimensionless MHD equations}
Let $L, B_0, u_0$  represent the characteristic quantities of length, magnetic induction, and fluid velocity respectively. We introduce the dimensionless variables as follows
\begin{align*}
\Bx\leftarrow\Bx/L, \qquad \Bu\leftarrow\Bu/u_0, \qquad p\leftarrow p/(\rho u_0^2), \qquad \phi\leftarrow\phi/(u_0B_0L),\\
\BB\leftarrow\BB/B_0, \qquad \BJ\leftarrow\BJ/(\sigma u_0B_0), \qquad \Bf\leftarrow\Bf t_0/(\rho u_0), \qquad \Bg\leftarrow\Bg/(u_0B_0).
\end{align*}
The dimensionless incompressible inductionless MHD model is expressed as
\begin{subequations}\label{eq:model_2-0st}
\begin{align}
 \Bu\cdot\nabla\Bu - R_e^{-1}\Delta\Bu + \nabla p - \alpha\BJ\times\BB = \Bf\quad  &\textrm{in}\;\; \DOmega,\\
\BJ + \nabla\phi - \Bu\times\BB = \Bg\quad    &\textrm{in}\;\;\DOmega,\\
\Div{\Bu} = 0 \quad &\textrm{in}\;\;\DOmega,\\
\Div{\BJ} = 0 \quad &\textrm{in}\;\;\DOmega,
\end{align}
\end{subequations}
where $R_e = \rho Lu_0/\nu$ is the Reynolds number and $\alpha = \sigma LB_0^2/(\rho u_0)$  is the coupling number between fluid and magnetic field. In the subsequent analysis, we assume that all physical parameters are constants and $\BB\in\BL^\infty(\DOmega)$.

To improve the mass conservation, we also introduce a stabilization parameter $\gamma>0$ and reformulate \eqref{eq:model_2-0st} into an AL form (also see \cite{lnz19,or04})
\begin{subequations}\label{eq:model_2st}
\begin{align}
 \Bu\cdot\nabla\Bu - R_e^{-1}\Delta\Bu - \gamma\nabla\Div\Bu + \nabla p - \alpha\BJ\times\BB = \Bf\quad  &\textrm{in}\;\; \DOmega,\label{eq:model_2st:u}\\
\BJ + \nabla\phi - \Bu\times\BB = \Bg\quad    &\textrm{in}\;\;\DOmega,\label{eq:model_2st:J}\\
\Div{\Bu} = 0 \quad &\textrm{in}\;\;\DOmega,\label{eq:model_2st:p}\\
\Div{\BJ} = 0 \quad &\textrm{in}\;\;\DOmega,\label{eq:model_2st:phi}\\
\Bu = \bm{0} \quad &\textrm{on}\;\; \partial\DOmega,\label{eq:model_2st:u_bc}\\
\phi = 0 \quad &\textrm{on}\;\; \partial\DOmega.\label{eq:model_2st:phi_bc}
\end{align}
\end{subequations}
The other advantage of this approach lies in its ability to improve the convergence rate of the preconditioned iterative method \cite{lz17,lnz19}.
In the remainder of this paper, we will focus on the AL form \eqref{eq:model_2st} rather than the original problem.
\begin{remark}
In the continuous form, $\Div\Bu=0$ removes the grad-div term, but the discrete velocity $\Bu_h$ is usually not divergence-free
when using Taylor-Hood elements.
Retaining stabilization term in the discrete problem will improve stability and mass conservation while maintaining consistency \cite{jlmnr17,or04}.
\end{remark}

\subsection{Solenoidal function spaces}
The divergence-free subspaces of $\BV$ and $\BD$ are denoted as
\begin{align*}
\BV(\Div 0) = \BV\cap\BH(\Div0,\DOmega),\;\BD(\Div 0) = \BD\cap\BH(\Div0,\DOmega).
\end{align*}
In terms of these solenoidal spaces, we then introduce an auxiliary variational problem for \eqref{eq:model_2st}: find $(\Bu,\BJ)\in\BV(\Div 0)\times\BD(\Div 0)$
\begin{subequations}\label{eq:model_3st}
\begin{align}
\Co(\Bu;\Bu,\Bv) + R_e^{-1}(\nabla\Bu,\nabla\Bv) - \alpha(\BJ\times\BB,\Bv) = (\Bf,\Bv),\;\;\forall\Bv\in\BV(\Div 0),\label{eq:model_3st:u}\\
(\BJ,\Bd) - (\Bu\times\BB,\Bd) = (\Bg,\Bd),\;\;\forall\Bd\in\BD(\Div 0),\label{eq:model_3st:J}
\end{align}
\end{subequations}
where the trilinear form is defined as
\begin{align}\label{eq:tri_define}
\Co(\Bw;\Bu,\Bv) := \frac{1}{2}(\Bw\cdot\nabla\Bu,\Bv) - \frac{1}{2}(\Bw\cdot\nabla\Bv,\Bu).
\end{align}
\begin{lemma}
The following results hold:\\
(1) Let $\Bw, \Bu, \Bv$ in $\BV$, we have
\begin{align}\label{eq:lemma_O_con}
&|\Co(\Bw;\Bu,\Bv)|\nn\\
\leq & \frac{1}{2}\big(\|\Bw\|_{\BL^4(\DOmega)}\|\nabla\Bu\|_{\BL^2(\DOmega)}\|\Bv\|_{\BL^4(\DOmega)}
+ \|\Bw\|_{\BL^4(\DOmega)}\|\nabla\Bv\|_{\BL^2(\DOmega)}\|\Bu\|_{\BL^4(\DOmega)}\big)\nn\\
\leq & C_{\Co}\|\Bw\|_{\BH^1(\DOmega)}\|\Bu\|_{\BH^1(\DOmega)}\|\Bv\|_{\BH^1(\DOmega)}.
\end{align}
(2) Let $\Bw, \Bv, \Bv$ in $\BV$, we have
\begin{align}\label{eq:lemma_O_pro}
\Co(\Bw;\Bv,\Bv) = \frac{1}{2}(\Bw\cdot\nabla\Bv,\Bv) - \frac{1}{2}(\Bw\cdot\nabla\Bv,\Bv) = 0.
\end{align}
\end{lemma}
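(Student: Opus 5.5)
Part (1) is proved by Hölder's inequality applied to the two terms in the definition \eqref{eq:tri_define}, followed by the Sobolev embedding $\BH^1(\DOmega)\hookrightarrow\BL^4(\DOmega)$. Part (2) is an immediate algebraic cancellation.

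\textbf{Part (1).} By the triangle inequality,
\begin{align*}
|\Co(\Bw;\Bu,\Bv)|\le \tfrac12|(\Bw\cdot\nabla\Bu,\Bv)|+\tfrac12|(\Bw\cdot\nabla\Bv,\Bu)|.
\end{align*}
Pointwise, $|(\Bw\cdot\nabla\Bu)\cdot\Bv|\le|\Bw|\,|\nabla\Bu|\,|\Bv|$ with the Frobenius norm on $\nabla\Bu$. The generalized Hölder inequality with exponents $(4,2,4)$, valid since $\tfrac14+\tfrac12+\tfrac14=1$, then gives
\begin{align*}
|(\Bw\cdot\nabla\Bu,\Bv)|\le\|\Bw\|_{\BL^4(\DOmega)}\|\nabla\Bu\|_{\BL^2(\DOmega)}\|\Bv\|_{\BL^4(\DOmega)}.
\end{align*}
The second term is bounded the same way with $\Bu$ and $\Bv$ exchanged. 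This is the first inequality in \eqref{eq:lemma_O_con}.

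For the second inequality we need the embedding $\BH^1(\DOmega)\hookrightarrow\BL^4(\DOmega)$. In three dimensions $H^1\subset L^6$, and on the bounded domain $\DOmega$ we have $L^6\subset L^4$. Moreover, on $\BV=\BH_0^1(\DOmega)$ we can extend by zero to $\mathbb{R}^3$, so the embedding constant $C_S$ needs no regularity of $\partial\DOmega$ beyond boundedness. Using $\|\Bz\|_{\BL^4}\le C_S\|\Bz\|_{\BH^1}$ for $\Bz=\Bw,\Bu,\Bv$ and $\|\nabla\Bz\|_{\BL^2}\le\|\Bz\|_{\BH^1}$, we obtain the bound with $C_{\Co}=C_S^2$. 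The only point needing care is this embedding constant; everything else is routine.

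\textbf{Part (2).} Setting $\Bu=\Bv$ in \eqref{eq:tri_define}, the two terms are identical and cancel, so $\Co(\Bw;\Bv,\Bv)=0$. Both terms are finite by Part (1), so the subtraction is legitimate. No integration by parts and no divergence-free assumption on $\Bw$ is needed; this is exactly why the skew-symmetrized form is used.
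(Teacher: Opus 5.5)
Your proposal is correct and is essentially the paper's own argument. The paper gives no separate proof: the displayed chain in the statement is exactly your H\"older step with exponents $(4,2,4)$ followed by the embedding $H^1(\DOmega)\hookrightarrow L^4(\DOmega)$, and part (2) is the same direct cancellation of the two identical terms in the skew-symmetrized form. Your extra details (the Frobenius-norm pointwise bound, and extension by zero on $H_0^1$ giving $C_{\Co}=C_S^2$ without boundary regularity) make explicit what the paper leaves implicit.
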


For further analysis,  we add \eqref{eq:model_3st:u} and $\alpha\times$\eqref{eq:model_3st:J}, which yields
\begin{equation}\label{eq:model_4st}
\Ca(\Bu,\BJ;\Bv,\Bd) + \Co(\Bu;\Bu,\Bv) = \Cl(\Bf,\Bg;\Bv,\Bd)
\end{equation}
for all $(\Bv,\Bd)\in\BV(\Div 0)\times\BD(\Div 0)$, where
\begin{align}
\Ca(\Bu,\BJ;\Bv,\Bd) &= R_e^{-1}(\nabla\Bu,\nabla\Bv) - \alpha(\BJ\times\BB,\Bv) + \alpha(\BJ,\Bd) - \alpha(\Bu\times\BB,\Bd),\label{eq:define_A}\\
\Cl(\Bf,\Bg;\Bv,\Bd) &= (\Bf,\Bv) + \alpha(\Bg,\Bd)\label{eq:define_B}.
\end{align}
Next, we define
\begin{align*}
&|||(\Bv,\Bd)|||_{\Ca}^2 = \|\Bv\|_{\BH^1(\DOmega)}^2 + \|\Bd\|_{\BH(\Div,\DOmega)}^2,\\
&|||\Cl|||_{a} = \sup\limits_{(\bm{0},\bm{0})\neq(\Bv,\Bd)\in\BV(\Div 0)\times\BD(\Div 0)}\frac{\Cl(\Bf,\Bg;\Bv,\Bd)}{|||(\Bv,\Bd)|||_{\Ca}},\\
&|||(\Bf,\Bg)|||_{\Cf}^2 = \|\Bf\|_{\BL^2(\DOmega)}^2 + \|\Bg\|_{\BL^2(\DOmega)}^2.
\end{align*}
It follows that $|||\Cl|||_{a}\leq\max\{1,\alpha\}|||(\Bf,\Bg)|||_{\Cf}$.

Applying the Cauchy-Schwarz inequality, the forms $\Ca, \Co, \Cl$ have the following properties:
\begin{align}
\label{eq:lemma_coer_con}
\Ca(\Bv,\Bd;\Bv,\Bd)
&= R_e^{-1}(\nabla\Bv,\nabla\Bv) - \alpha(\Bd\times\BB,\Bv) + \alpha(\Bd,\Bd) - \alpha(\Bv\times\BB,\Bd)\nonumber\\
&= R_e^{-1}\|\nabla\Bv\|_{\BL^2(\DOmega)}^2 + \alpha\|\Bd\|_{\BL^2(\DOmega)}^2\nonumber\\
&\geq C_a\min\{R_e^{-1},\alpha\}|||(\Bv,\Bd)|||_{\Ca}^2,\\
\label{eq:lemma_con_con}
|\Ca(\Bu,\BJ;\Bv,\Bd)|
&= |R_e^{-1}(\nabla\Bu,\nabla\Bv) - \alpha(\BJ\times\BB,\Bv) + \alpha(\BJ,\Bd) - \alpha(\Bu\times\BB,\Bd)|\nonumber\\
&\leq R_e^{-1}\|\nabla\Bu\|_{\BL^2(\DOmega)}\|\nabla\Bv\|_{\BL^2(\DOmega)} + \alpha\|\BJ\|_{\BL^2(\DOmega)}\|\BB\|_{\BL^\infty(\DOmega)}\|\Bv\|_{\BL^2(\DOmega)} \nonumber\\
&+ \alpha\|\BJ\|_{\BL^2(\DOmega)}\|\Bd\|_{\BL^2(\DOmega)} + \alpha\|\Bu\|_{\BL^2(\DOmega)}\|\BB\|_{\BL^\infty(\DOmega)}\|\Bd\|_{\BL^2(\DOmega)}\nn\\
&\leq R_e^{-1}\|\Bu\|_{\BH^1(\DOmega)}\|\Bv\|_{\BH^1(\DOmega)} + \alpha {\|\BB\|_{\BL^\infty(\DOmega)}}\|\BJ\|_{\BH(\Div,\DOmega)}\|\Bv\|_{\BH^1(\DOmega)} \nn\\
&+ \alpha\|\BJ\|_{\BH(\Div,\DOmega)}\|\Bd\|_{\BH(\Div,\DOmega)} + \alpha {\|\BB\|_{\BL^\infty(\DOmega)}}\|\Bu\|_{\BH^1(\DOmega)}\|\Bd\|_{\BH(\Div,\DOmega)}\nn\\
&\leq \max\{R_e^{-1},\alpha, \alpha {\|\BB\|_{\BL^\infty(\DOmega)}}\}|||(\Bu,\BJ)|||_{\Ca}|||(\Bv,\Bd)|||_{\Ca},\\
\label{eq:lemma_O_conn}
|\Co(\Bw;\Bu,\Bv)|
&\leq C_{\Co}\|\Bw\|_{\BH^1(\DOmega)}\|\Bu\|_{\BH^1(\DOmega)}\|\Bv\|_{\BH^1(\DOmega)}\nn\\
&\leq C_{\Co}|||(\Bw,\Bd)|||_{\Ca}|||(\Bu,\BJ)|||_{\Ca}|||(\Bv,\Bd)|||_{\Ca},\\
\label{eq:lemma_bound_con}
|\Cl(\Bf,\Bg;\Bv,\Bd)|
&\leq |||\Cl|||_{a} |||(\Bv,\Bd)|||_{\Ca}, \\
\label{eq:lemma_boundd_con}
|\Cl(\Bf,\Bg;\Bv,\Bd)|
&\leq \|\Bf\|_{\BL^2(\DOmega)}\|\Bv\|_{\BH^1(\DOmega)} + \alpha\|\Bg\|_{\BL^2(\DOmega)}\|\Bd\|_{\BH(\Div,\DOmega)}\nn\\
&\leq \max\{1,\alpha\}|||(\Bf,\Bg)|||_{\Cf} |||(\Bv,\Bd)|||_{\Ca},
\end{align}
where $C_a, C_{\Co}$ are positive constants depending on $\DOmega$.

We will discuss the existence and uniqueness of solutions in solenoidal function spaces based on the results from \cite{NSbook} (Section \uppercase\expandafter{\romannumeral 4}, Theorem 1.2, 1.3) and \cite{ds04} (Theorem 2.11). For clarity, we show the following theorem.
\begin{theorem}\label{exit_unique}
Let $V$ be a separable Hilbert space with dual space $V'$. Assume that $l\in V'$ is a given linear continuous functional and
$$a(\cdot;\cdot,\cdot):V \times V \times V \rightarrow \mathbb{R}$$
is a trilinear form satisfying the following properties:
\begin{itemize}
\item[(1)] There exists a constant $ \epsilon > 0$ such that
      \begin{align*}
      |a(u; v, w)| \leq \epsilon \|u\|_V \|v\|_V \|w\|_V, \quad \forall u, v, w \in V.
      \end{align*}
\item[(2)] There exists a constant $ \beta > 0 $ such that
      \begin{align*}
      a(u; v, v) \geq \beta \|v\|_V^2, \quad \forall u, v \in V.
      \end{align*}
\item[(3)] The mapping $u \mapsto a(u; u, v) $ is sequentially weakly continuous on $V$.
  Namely, for any sequence $\{u_m\}_{m \in \mathbb{N}} \subset V$ satisfying $u_m \rightarrow u$ in $V$ as $m \rightarrow \infty$,
      \begin{align*}
      \lim_{m \to \infty} a(u_m; u_m, v) = a(u; u, v), \forall v\in V.
      \end{align*}
\end{itemize}
Consider the variational problem: find $u\in V$ such that
\begin{align*}
a(u; u, v) = l(v), \quad\forall v\in V.
\end{align*}
Then, under the small parameter condition
\begin{align*}
\epsilon \beta^{-2} \|l\|_{V'} < 1
\end{align*}
the variational problem has a unique solution $u\in V$. Moreover, any solution satisfies the stability bound
\begin{align*}
\|u\|_V \leq \beta^{-1} \|l\|_{V'}.
\end{align*}
\end{theorem}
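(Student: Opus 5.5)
Existence will come from a Galerkin approximation combined with Brouwer's fixed point theorem, in the form of the standard "acute angle" lemma; the a priori bound and uniqueness will then follow from coercivity (2) and the continuity bound (1).

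\textbf{A priori bound.} Suppose $u$ is any solution. Testing with $v=u$ and using (2) gives
\[
\beta\|u\|_V^2\le a(u;u,u)=l(u)\le\|l\|_{V'}\|u\|_V ,
\]
so $\|u\|_V\le\beta^{-1}\|l\|_{V'}$. This yields the stability bound and holds for Galerkin solutions as well.

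\textbf{Existence.} Since $V$ is separable, choose an increasing family of finite-dimensional subspaces $V_m$ whose union is dense in $V$. On $V_m$ define $P_m:V_m\to V_m$ by
\[
(P_m w,v)_V=a(w;w,v)-l(v)\qquad \forall\, v\in V_m .
\]
By (1) the map $P_m$ is polynomial in finite-dimensional coordinates, hence continuous. Using (2),
\[
(P_m w,w)_V\ge \|w\|_V\bigl(\beta\|w\|_V-\|l\|_{V'}\bigr),
\]
which is positive on the sphere $\|w\|_V=R$ whenever $R>\beta^{-1}\|l\|_{V'}$. The Brouwer corollary therefore gives $u_m\in V_m$ with $P_m u_m=0$ and $\|u_m\|_V\le\beta^{-1}\|l\|_{V'}$.

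By boundedness, extract a weakly convergent subsequence $u_m\rightharpoonup u$ in $V$. Passing to the limit in $a(u_m;u_m,v)=l(v)$ for fixed $v\in V_k$ requires $a(u_m;u_m,v)\to a(u;u,v)$; this is exactly hypothesis (3), read as sequential weak continuity. Density of $\bigcup_k V_k$, together with the boundedness from (1), then extends the identity to all $v\in V$.

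\textbf{Main obstacle.} The delicate point is hypothesis (3). As literally stated it uses strong convergence, which would not suffice, since the Galerkin sequence converges only weakly. I would interpret it, consistently with its label, as weak convergence. In the application, $V(\Div 0)\times D(\Div 0)$ and the convective term $\Co$, this weak continuity is verified via the compact embedding $H^1\hookrightarrow L^4$.

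\textbf{Uniqueness.} Let $u_1,u_2$ be two solutions and set $w=u_1-u_2$. Subtracting the equations gives
\[
a(u_1;w,v)=a(u_2;u_2,v)-a(u_1;u_2,v)=-\bigl[a(u_1;u_2,v)-a(u_2;u_2,v)\bigr].
\]
This step needs $a$ to be affine in its first argument with the difference controlled by (1): $a(u_1;\cdot,\cdot)-a(u_2;\cdot,\cdot)$ should be bounded by $\epsilon\|w\|_V$. Hypothesis (1) does not state this directly, so I would either assume it or invoke the structure $a(w;u,v)=\mathcal{A}(u,v)+\Co(w;u,v)$, where $\Co$ is linear in $w$ and bounded by $\epsilon$.

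Granting this, take $v=w$. Coercivity (2) and the a priori bound on $u_2$ give
\[
\beta\|w\|_V^2\le\epsilon\|w\|_V^2\|u_2\|_V\le\epsilon\beta^{-1}\|l\|_{V'}\|w\|_V^2 .
\]
Since $\epsilon\beta^{-2}\|l\|_{V'}<1$, it follows that $w=0$.
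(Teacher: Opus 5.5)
Your proposal is correct and is essentially the argument the paper relies on. The paper gives no proof of its own. It cites Girault--Raviart (Chapter IV, Theorems 1.2 and 1.3), whose proof is exactly your route: Galerkin approximation with the Brouwer ``acute angle'' corollary and a weak limit for existence, then coercivity plus a Lipschitz bound in the first argument for uniqueness.

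Both caveats you raise are legitimate, and they match the hypotheses actually used in that reference. First, (3) must be read with weak convergence, as the paper itself does in Step~2 of Theorem~\ref{thm_div0_exist_con}. Second, uniqueness needs $|a(u_1;v,w)-a(u_2;v,w)|\le\epsilon\|u_1-u_2\|_V\|v\|_V\|w\|_V$, i.e.\ the structure $a(w;u,v)=a_0(u,v)+a_1(w;u,v)$ with $\epsilon$ bounding only the trilinear part. A genuinely trilinear $a$ could not satisfy (2), since $a(0;v,v)=0$.
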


The corresponding result for variational problem \eqref{eq:model_4st} is stated as follows.
\begin{theorem}\label{thm_div0_exist_con}
For given $\Bf, \Bg\in\BL^2(\DOmega)$, there exists at least one solution
of variational problem \eqref{eq:model_4st}. Moreover, such solution satisfies the stability bound
\begin{align*}
|||(\Bu,\BJ)|||_{\Ca}\leq\frac{|||\Cl|||_{a}}{C_a\min\{R_e^{-1},\alpha\}}.
\end{align*}
\end{theorem}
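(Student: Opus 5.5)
We will obtain Theorem~\ref{thm_div0_exist_con} from the existence part of Theorem~\ref{exit_unique}, applied to the product space $V:=\BV(\Div 0)\times\BD(\Div 0)$ with the norm $|||\cdot|||_{\Ca}$. Because no smallness assumption is made, we only use the existence argument and the a priori bound. Uniqueness is not claimed, so we do not need the condition $\epsilon\beta^{-2}\|l\|_{V'}<1$.

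\emph{Setting up the space and the trilinear form.} The space $V$ is a closed subspace of $\BH^1_0(\DOmega)\times\BH(\Div,\DOmega)$, since the divergence is continuous on both factors. Hence $V$ is a separable Hilbert space. On $V$ we define
\begin{align*}
a\big((\Bw,\Bc);(\Bu,\BJ),(\Bv,\Bd)\big):=\Ca(\Bu,\BJ;\Bv,\Bd)+\Co(\Bw;\Bu,\Bv),\qquad l(\Bv,\Bd):=\Cl(\Bf,\Bg;\Bv,\Bd).
\end{align*}
With these definitions, \eqref{eq:model_4st} reads $a(U;U,X)=l(X)$ for all $X\in V$.

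\emph{Checking the hypotheses.} We verify the three assumptions of Theorem~\ref{exit_unique} in turn.
\begin{itemize}
\item[(1)] Continuity follows from \eqref{eq:lemma_con_con} and \eqref{eq:lemma_O_conn}. Strictly, the $\Ca$ part is bilinear, so the bound has the form $C|||U|||\,|||X|||+C_{\Co}|||W|||\,|||U|||\,|||X|||$. This is all that the Galerkin existence argument needs.
\item[(2)] Coercivity holds because $\Co(\Bw;\Bv,\Bv)=0$ by \eqref{eq:lemma_O_pro}. Combined with \eqref{eq:lemma_coer_con}, this gives $a(W;X,X)=\Ca(X;X)\ge C_a\min\{R_e^{-1},\alpha\}|||X|||_{\Ca}^2$. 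We therefore take $\beta=C_a\min\{R_e^{-1},\alpha\}$. Note that on $\BD(\Div0)$ we have $\|\Bd\|_{\BH(\Div)}=\|\Bd\|_{\BL^2}$, and this is why the coercivity in the $\BD$ component holds.
\item[(3)] Since $\Ca$ is linear and continuous in its first argument, it suffices to treat $\Co(\Bu_m;\Bu_m,\Bv)$ for $\Bu_m\rightharpoonup\Bu$ weakly in $\BH^1_0$. By Rellich compactness in 3D, $\Bu_m\to\Bu$ strongly in $\BL^4$ (indeed in $\BL^q$ for $q<6$). We first take $\Bv$ smooth. The term $(\Bu_m\cdot\nabla\Bv,\Bu_m)$ converges by strong $\BL^4$ convergence. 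The term $(\Bu_m\cdot\nabla\Bu_m,\Bv)$ converges because $\nabla\Bu_m\rightharpoonup\nabla\Bu$ weakly in $\BL^2$ while $\Bu_m\otimes\Bv\to\Bu\otimes\Bv$ strongly in $\BL^2$. Density together with the uniform bound \eqref{eq:lemma_O_con} then extends the convergence to all $\Bv\in\BV(\Div0)$.
\end{itemize}

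\emph{Existence and the stability bound.} Existence follows as in \cite{NSbook}. We take a Galerkin sequence in finite-dimensional subspaces $V_m\subset V$. On each $V_m$, a solution exists by the Brouwer fixed point theorem (the acute-angle lemma), because $a(X;X,X)-l(X)\ge\beta|||X|||^2-|||\Cl|||_a|||X|||>0$ on a sufficiently large sphere. The Galerkin solutions are uniformly bounded, so a weakly convergent subsequence exists, and hypothesis (3) allows us to pass to the limit. For the stability bound, we test \eqref{eq:model_4st} with $(\Bv,\Bd)=(\Bu,\BJ)$. Using \eqref{eq:lemma_O_pro} and \eqref{eq:lemma_coer_con} gives $C_a\min\{R_e^{-1},\alpha\}|||(\Bu,\BJ)|||_{\Ca}^2\le|||\Cl|||_a|||(\Bu,\BJ)|||_{\Ca}$, which is the claimed estimate.

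\emph{Main obstacle.} We expect the main difficulty to be the weak sequential continuity in (3), specifically the convective term under only weak $\BH^1$ convergence. Two points need care: the compact embedding into $\BL^4$ in three dimensions, and the passage to the limit simultaneously in $m$ and along the Galerkin subspaces. The remaining steps are routine applications of the bounds already stated.
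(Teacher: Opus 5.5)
Your proposal is correct and follows essentially the same route as the paper: you apply the abstract existence result (Theorem~\ref{exit_unique}, i.e.\ the Girault--Raviart theorem) on the separable space $\BV(\Div 0)\times\BD(\Div 0)$. You use coercivity from \eqref{eq:lemma_coer_con} together with $\Co(\Bw;\Bv,\Bv)=0$, continuity, and weak sequential continuity of the convective term via the compact embedding into $\BL^4$. Your version is slightly more careful than the paper's sketch: you note that only the existence part (without the smallness condition) is needed and that the bilinear part of $a$ does not fit hypothesis (1) verbatim, and you justify the limit in $(\Bu_m\cdot\nabla\Bu_m,\Bv)$ by a weak--strong pairing rather than by continuity of $\Co$ alone.
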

\begin{proof}
The proof consists of the following steps.

{\bf Step 1.} The spaces $\BV(\Div 0)$ and $\BD(\Div 0)$ are closed subspaces of $\BH^1(\DOmega)$ and $\BH(\Div,\DOmega)$, respectively. Consequently, $\BV(\Div 0)\times\BD(\Div 0)$ is separable.

{{\bf Step 2.} Let $\{(\Bu_m, \BJ_m)\}_{m\in\bbN}$ be a sequence in $\BV(\Div\,0)\times \BD(\Div\,0)$ converging weakly to $(\Bu,\BJ)$. Weak convergence implies boundedness in $\BV(\Div\,0)\times\BD(\Div\,0)$. The weak continuity of the bilinear form $\Ca$ follows immediately from its continuity.

It remains to verify the weak continuity of the nonlinear convection term. By the Rellich-Kondrachov compactness theorem, the embedding $\BH^1(\DOmega)^3 \hookrightarrow \BL^4(\DOmega)^3$ is compact. Hence, upon extracting a subsequence, still denoted by $\Bu_m$, we have $\Bu_m \to \Bu$ strongly in $\BL^4(\DOmega)^3$, and the convergence of $\BJ_m$ to $\BJ$ is inherited by the subsequence. In view of the boundedness of $\{\Bu_m\}$ in $\BH^1(\DOmega)^3$ and the continuity of $\Co$ established in \eqref{eq:lemma_O_conn}, it follows that
\[
\lim_{m\to\infty} \Co(\Bu_m;\Bu_m,\Bv) = \Co(\Bu;\Bu,\Bv),
\quad \forall \Bv\in \BV(\Div\,0).
\]
Since the limit is independent of the chosen subsequence, the subsequence convergence criterion ensures convergence of the whole sequence. Consequently, the mapping
\[
(\Bu,\BJ) \mapsto \Ca(\Bu,\BJ;\Bv,\Bd) + \Co(\Bu;\Bu,\Bv)
\]
is sequentially weakly continuous on $\BV(\Div\,0)\times \BD(\Div\,0)$.

}

{\bf Step 3.} The coercivity and continuity properties are shown in \eqref{eq:lemma_coer_con}-\eqref{eq:lemma_bound_con}. Combining these properties with the results of Steps 1 and 2 completes the proof.
\end{proof}

Given that the coefficients satisfy certain conditions, the problem admits a unique solution according to Theorem \ref{exit_unique}.
\begin{theorem}\label{thm_div0_unique_con}
Assume that
\begin{align}\label{div0_unique_con}
\frac{C_{\Co}|||\Cl|||_{a}}{C_a^2\min\{R_e^{-2},\alpha^2\}} \leq 1,
\end{align}
the variational problem \eqref{eq:model_4st} has a unique solution.
\end{theorem}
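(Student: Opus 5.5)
Uniqueness does not follow from coercivity alone, because of the nonlinear convection term. I would therefore prove it by the standard difference argument, which is also what underlies Theorem \ref{exit_unique}. Let $(\Bu_1,\BJ_1)$ and $(\Bu_2,\BJ_2)$ be two solutions of \eqref{eq:model_4st} in $\BV(\Div 0)\times\BD(\Div 0)$. Both exist and obey the stability bound of Theorem \ref{thm_div0_exist_con}:
\begin{align*}
|||(\Bu_i,\BJ_i)|||_{\Ca}\leq \frac{|||\Cl|||_{a}}{C_a\min\{R_e^{-1},\alpha\}}.
\end{align*}
Set $\Be=\Bu_1-\Bu_2$ and $\Bs=\BJ_1-\BJ_2$. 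Subtracting the two equations and using the bilinearity of $\Ca$ gives
\begin{align*}
\Ca(\Be,\Bs;\Bv,\Bd) + \Co(\Bu_1;\Bu_1,\Bv)-\Co(\Bu_2;\Bu_2,\Bv)=0
\end{align*}
for all test pairs.

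Next I would split the nonlinear difference as
\begin{align*}
\Co(\Bu_1;\Bu_1,\Bv)-\Co(\Bu_2;\Bu_2,\Bv)=\Co(\Bu_1;\Be,\Bv)+\Co(\Be;\Bu_2,\Bv),
\end{align*}
and test with $(\Bv,\Bd)=(\Be,\Bs)$. The skew-symmetry \eqref{eq:lemma_O_pro} kills $\Co(\Bu_1;\Be,\Be)$. Coercivity \eqref{eq:lemma_coer_con} and the continuity bound \eqref{eq:lemma_O_conn} then give
\begin{align*}
C_a\min\{R_e^{-1},\alpha\}\,|||(\Be,\Bs)|||_{\Ca}^2\leq |\Co(\Be;\Bu_2,\Be)|\leq C_{\Co}\,|||(\Bu_2,\BJ_2)|||_{\Ca}\,|||(\Be,\Bs)|||_{\Ca}^2 .
\end{align*}
Inserting the stability bound for $\Bu_2$ yields
\begin{align*}
\Big(1-\frac{C_{\Co}|||\Cl|||_{a}}{C_a^2\min\{R_e^{-1},\alpha\}^2}\Big)|||(\Be,\Bs)|||_{\Ca}^2\leq 0 .
\end{align*}
Since $\min\{R_e^{-1},\alpha\}^2=\min\{R_e^{-2},\alpha^2\}$, the bracket is exactly $1$ minus the quantity in \eqref{div0_unique_con}.

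The main obstacle is the borderline case where \eqref{div0_unique_con} holds with equality. There the bracket vanishes and the inequality above gives nothing. Theorem \ref{exit_unique} itself needs the strict inequality $\epsilon\beta^{-2}\|l\|_{V'}<1$, with $\epsilon=C_{\Co}$, $\beta=C_a\min\{R_e^{-1},\alpha\}$ and $\|l\|_{V'}=|||\Cl|||_{a}$. I would first try to recover strictness from the Cauchy–Schwarz step $|\Co(\Be;\Bu_2,\Be)|\le C_{\Co}\|\Be\|_{\BH^1}\|\Bu_2\|_{\BH^1}\|\Be\|_{\BH^1}$. This bound involves only $\|\Be\|_{\BH^1(\DOmega)}^2\le|||(\Be,\Bs)|||_{\Ca}^2$, while coercivity also controls $\|\Bs\|$. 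If $\Bs\neq 0$, or if either the continuity bound or the stability bound is strict, the inequality is strict and we are done. Otherwise I would simply read the hypothesis as strict, consistent with Theorem \ref{exit_unique}, and note that the proof is the one above.

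Under strict inequality the bracket is positive, so $|||(\Be,\Bs)|||_{\Ca}=0$. Hence $\Bu_1=\Bu_2$ and $\BJ_1=\BJ_2$, which is the claimed uniqueness. Existence was already given by Theorem \ref{thm_div0_exist_con}.
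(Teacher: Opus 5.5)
Your argument is correct and is exactly the proof of Theorem~\ref{exit_unique} written out for this problem, which is all the paper does: it invokes that theorem with $\epsilon=C_{\Co}$, $\beta=C_a\min\{R_e^{-1},\alpha\}$ and $\|l\|_{V'}=|||\Cl|||_{a}$. You are also right that this only covers the strict inequality, and the paper has the same mismatch between the stated $\leq 1$ and the $<1$ that Theorem~\ref{exit_unique} requires; your borderline case analysis only yields $\BJ_1=\BJ_2$ and otherwise relies on strictness conditions that cannot be checked a priori, so it is cleaner to drop it and simply state the hypothesis as strict.
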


\subsection{Continuous mixed variational formulation}
Next, we define the continuous mixed variational formulation for \eqref{eq:model_2st}: find $(\Bu,p,\BJ,\phi)\in\BV\times Q\times\BD\times S$ such that
\begin{subequations}\label{eq:model_5st}
\begin{align}
\Co(\Bu;\Bu,\Bv) + R_e^{-1}(\nabla\Bu,\nabla\Bv) + \gamma(\Div\Bu,\Div\Bv)\nn\\
 - (p,\Div\Bv)- \alpha(\BJ\times\BB,\Bv)
= (\Bf,\Bv),\label{eq:model_5st:u}\\
(\BJ,\Bd) - (\phi,\Div\Bd) - (\Bu\times\BB,\Bd) = (\Bg,\Bd),\label{eq:model_5st:J}\\
-(\Div{\Bu},q) = 0,\label{eq:model_5st:p}\\
-(\Div{\BJ},\varphi) = 0.\label{eq:model_5st:phi}
\end{align}
\end{subequations}
for all $(\Bv,q,\Bd,\varphi)\in\BV\times Q\times\BD\times S$.
We introduce the bilinear forms
\begin{align*}
b_q(q,\Bv) := (q,\Div\Bv),\quad b_\varphi(\varphi,\Bd) := (\varphi,\Div\Bd)
\end{align*}
which were neglected in previous analyses. $b_q(\cdot,\cdot)$ and $b_\varphi(\cdot,\cdot)$ are both continuous and satisfy the following inf-sup conditions
\begin{align}
\label{infsup_con_up}
\sup_{\bm{0}\neq\Bv\in\BV}\frac{b_q(q,\Bv)}{\|\Bv\|_{\BH^1(\DOmega)}}\geq\beta_1\|q\|_{L^2(\DOmega)}>0,\\
\label{infsup_con_Jphi}
\sup_{\bm{0}\neq\Bd\in\BD}\frac{b_\varphi(\varphi,\Bd)}{\|\Bd\|_{\BH(\Div,\DOmega)}}\geq\beta_2\|\varphi\|_{L^2(\DOmega)}>0,
\end{align}
respectively. The detailed proof can be found in \cite{MFEMbook} (Section 4.4.2) and \cite{NSbook} (Section \uppercase\expandafter{\romannumeral 1}.5.1).

Define $\Cb(q,\varphi;\Bv,\Bd) := b_q(q,\Bv) + \alpha b_\varphi(\varphi,\Bd)$, we have
\begin{subequations}\label{eq:model_6st}
\begin{align}
\hspace{-0.4cm}\Ca(\Bu,\BJ;\Bv,\Bd) + \Co(\Bu;\Bu,\Bv) - \Cb(p,\phi;\Bv,\Bd) &= \Cl(\Bf,\Bg,\Bv,\Bd),\label{eq:model_6st:1}\\
\Cb(q,\varphi;\Bu,\BJ) &= 0,\label{eq:model_6st:2}
\end{align}
for all $(\Bv,q,\Bd,\varphi)\in\BV\times Q\times\BD\times S$.
\end{subequations}
By setting
\begin{align*}
|||(q,\varphi)|||_{\Cf}^2 = \|q\|_{L^2(\DOmega)}^2 + \|\varphi\|_{L^2(\DOmega)}^2,
\end{align*}
it is easy to obtain
\begin{align*}
|\Cb(q,\varphi;\Bv,\Bd)|\leq C_{\Cf}|||(q,\varphi)|||_{\Cf}|||(\Bv,\Bd)|||_{\Ca}.
\end{align*}
\begin{lemma}\label{lemma_B_infsup_con}
There exists a constant $\Gamma>0$ only depending on $\DOmega$ such that for all $(q,\varphi)\in Q\times S$,
\begin{align*}
\sup_{(\Bv,\Bd)\in\BV\times\BD}\frac{\Cb(q,\varphi;\Bv,\Bd)}{|||(\Bv,\Bd)|||_{\Ca}}
\geq \Gamma|||(q,\varphi)|||_{\Cf}.
\end{align*}
\end{lemma}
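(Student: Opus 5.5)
The plan is to build the combined inf-sup condition for $\Cb$ directly from the two separate inf-sup conditions \eqref{infsup_con_up} and \eqref{infsup_con_Jphi}. The essential fact is that $\Cb(q,\varphi;\Bv,\Bd)=b_q(q,\Bv)+\alpha b_\varphi(\varphi,\Bd)$ decouples: the pressure only interacts with the velocity test function, and the potential only with the current test function. So for a fixed $(q,\varphi)\in Q\times S$, I will choose the two test functions independently and then combine them.

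\textbf{Step 1 (separate test functions).} By \eqref{infsup_con_up} there exists $\Bv_q\in\BV$ with
\[
\|\Bv_q\|_{\BH^1(\DOmega)}=\|q\|_{L^2(\DOmega)}
\quad\text{and}\quad
b_q(q,\Bv_q)\ge \tfrac{\beta_1}{2}\|q\|_{L^2(\DOmega)}^2 .
\]
This follows by taking a near-maximizer in the supremum and rescaling; the factor $\tfrac12$ just avoids assuming the supremum is attained. Similarly, by \eqref{infsup_con_Jphi} there exists $\Bd_\varphi\in\BD$ with
\[
\|\Bd_\varphi\|_{\BH(\Div,\DOmega)}=\|\varphi\|_{L^2(\DOmega)}
\quad\text{and}\quad
b_\varphi(\varphi,\Bd_\varphi)\ge \tfrac{\beta_2}{2}\|\varphi\|_{L^2(\DOmega)}^2 .
\]
For $\BD=\BH(\Div,\DOmega)$ one could even construct $\Bd_\varphi=\nabla w$ explicitly, where $-\Delta w=\varphi$ with $w\in H_0^1(\DOmega)$; the boundary condition is natural for $\phi$, which is why $\BD$ carries no boundary constraint. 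That construction is not needed here, since \eqref{infsup_con_Jphi} is assumed.

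\textbf{Step 2 (combine).} Take $(\Bv,\Bd)=(\Bv_q,\Bd_\varphi)$. Then
\[
\Cb(q,\varphi;\Bv,\Bd)\ge \tfrac12\min\{\beta_1,\alpha\beta_2\}\,|||(q,\varphi)|||_{\Cf}^2 ,
\]
and the norm of the test pair is exactly
\[
|||(\Bv,\Bd)|||_{\Ca}^2=\|q\|_{L^2(\DOmega)}^2+\|\varphi\|_{L^2(\DOmega)}^2=|||(q,\varphi)|||_{\Cf}^2 .
\]
Dividing gives the claim with $\Gamma=\tfrac12\min\{\beta_1,\alpha\beta_2\}$. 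One may sharpen the $\tfrac12$ to $1-\epsilon$, or to $1$ if the supremum is attained. The case $(q,\varphi)=(0,0)$ is trivial. If only one of $q,\varphi$ vanishes, the corresponding test function is zero but the pair is nonzero, so the quotient remains well defined.

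\textbf{Main obstacle.} There is no genuine analytic difficulty, since the coupling is block-diagonal. The only delicate point is bookkeeping for the constant. Strictly, $\Gamma$ depends on $\alpha$ as well as on $\DOmega$ (through $\beta_1$ and $\beta_2$). I will state this explicitly, or normalize the form so that $\Gamma$ depends on $\DOmega$ alone as the lemma asserts, treating $\alpha$ as a fixed physical constant. I would also double check that the scaling in Step 1 is consistent with the sign conventions used in \eqref{eq:model_6st}. The sign is harmless, because replacing $\Bv$ by $-\Bv$ flips it.
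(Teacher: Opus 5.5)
Your proposal is correct and is essentially the paper's proof. Both arguments pick the velocity and current test functions independently from \eqref{infsup_con_up} and \eqref{infsup_con_Jphi} and add them, using that $\Cb$ is block-diagonal.

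The only difference is normalization. You fix the test-function norms to $\|q\|$ and $\|\varphi\|$ with a safety factor $\tfrac12$, which gives $\Gamma=\tfrac12\min\{\beta_1,\alpha\beta_2\}$. The paper requires $b_q(q,\Bv)\ge\|q\|^2$ with $\|\Bv\|\le\beta_1^{-1}\|q\|$, which gives $\Gamma=\min\{1,\alpha\}\min\{\beta_1,\beta_2\}$.

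Your remark that $\Gamma$ also depends on $\alpha$, not only on $\DOmega$, is accurate and applies equally to the paper's constant.
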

\begin{proof}
From the inf-sup condition \eqref{infsup_con_up} and \eqref{infsup_con_Jphi}, we can select $\Bv\in\BV, \Bd\in\BD$ satisfying
\begin{align*}
\|\Bv\|_{\BH^1(\DOmega)} \leq \beta_1^{-1}\|q\|_{L^2(\DOmega)},\quad
\|\Bd\|_{\BH(\Div,\DOmega)} \leq \beta_2^{-1}\|\varphi\|_{L^2(\DOmega)},
\end{align*}
and
\begin{align*}
b_q(q,\Bv) \geq \|q\|_{L^2(\DOmega)}^2,\quad b_\varphi(\varphi,\Bd)\geq\|\varphi\|_{L^2(\DOmega)}^2.
\end{align*}
Using the definition of $\Cb$ and the above inequalities,
\begin{align*}
\Cb(q,\varphi;\Bv,\Bd) &\geq \|q\|_{L^2(\DOmega)}^2 + \alpha\|\varphi\|_{L^2(\DOmega)}^2
\geq\min\{1,\alpha\} |||(q,\varphi)|||_{\Cf}^2,
\end{align*}
and
\begin{align*}
|||(\Bv,\Bd)|||_{\Ca}^2 &\leq \max\{\beta_1^{-2},\beta_2^{-2}\}|||(q,\varphi)|||_{\Cf}^2.
\end{align*}
Thus, we conclude
\begin{align*}
\frac{\Cb(q,\varphi;\Bv,\Bd)}{|||(\Bv,\Bd)|||_{\Ca}}
\geq \frac{\min\{1,\alpha\}}{\sqrt{\max\{\beta_1^{-2},\beta_2^{-2}\}}}|||(q,\varphi)|||_{\Cf}.
\end{align*}
Taking $\Gamma = \frac{\min\{1,\alpha\}}{\sqrt{\max\{\beta_1^{-2},\beta_2^{-2}\}}}>0$ completes the proof.
\end{proof}
\begin{theorem}
For $\Bf, \Bg\in\BL^2(\DOmega)$, there exists at least one solution of the mixed variational problem \eqref{eq:model_5st}.
The following stability bounds hold
\begin{align*}
|||(\Bu,\BJ)|||_{\Ca}\leq \frac{|||\Cl|||_{a}}{C_a\min\{R_e^{-1},\alpha\}}
\end{align*}
and
\begin{align*}
|||(p,\phi)|||_{\Cf}\leq &\Gamma^{-1}\big[\max\{R_e^{-1},\alpha, \alpha {\|\BB\|_{\BL^\infty(\DOmega)}},\gamma\}|||(\Bu,\BJ)|||_{\Ca}\\
&+ C_{\Co}|||(\Bu,\BJ)|||_{\Ca}^2 + \max\{1,\alpha\}|||(\Bf,\Bg)|||_{\Cf}\big]
\end{align*}
for any solution $(\Bu,p,\BJ,\phi)$. Further, under the assumption \eqref{div0_unique_con}, the solution of \eqref{eq:model_5st} is unique.
\end{theorem}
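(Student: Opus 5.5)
Solutions of the mixed problem \eqref{eq:model_5st} will be related to solutions of the solenoidal problem \eqref{eq:model_4st}, and the pressure and potential recovered through the inf-sup condition of Lemma \ref{lemma_B_infsup_con}.

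\textbf{Existence and velocity/current bound.} Theorem \ref{thm_div0_exist_con} gives a solution $(\Bu,\BJ)\in\BV(\Div 0)\times\BD(\Div 0)$ of \eqref{eq:model_4st} with the stated bound. Consider the functional
\[
\mathcal{R}(\Bv,\Bd):=\Cl(\Bf,\Bg;\Bv,\Bd)-\Ca(\Bu,\BJ;\Bv,\Bd)-\Co(\Bu;\Bu,\Bv)-\gamma(\Div\Bu,\Div\Bv)
\]
on $\BV\times\BD$. Since $\Div\Bu=0$, the grad-div term vanishes, and $\mathcal{R}$ vanishes on the kernel $\BV(\Div0)\times\BD(\Div0)$ of $\Cb$. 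This kernel is exactly $\{(\Bv,\Bd):\Cb(q,\varphi;\Bv,\Bd)=0\ \forall (q,\varphi)\}$: $\Div\Bv\in L_0^2$ for $\Bv\in\BH_0^1$, and $\Div\Bd\in L^2$. By continuity of $\Ca,\Co,\Cl$, $\mathcal{R}$ is bounded. The closed range theorem, equivalent to Lemma \ref{lemma_B_infsup_con}, then yields a unique $(p,\phi)\in Q\times S$ with $\Cb(p,\phi;\Bv,\Bd)=-\mathcal{R}(\Bv,\Bd)$. Taking $\Bd=0$ and $\Bv=0$ separately recovers \eqref{eq:model_5st:u} and \eqref{eq:model_5st:J}, noting the factor $\alpha$ in $\Cb$ and $\Cl$. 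Equations \eqref{eq:model_5st:p} and \eqref{eq:model_5st:phi} hold because $\Bu,\BJ$ are divergence-free. Conversely, for any solution of \eqref{eq:model_5st}, taking $q=\Div\Bu$ and $\varphi=\Div\BJ$ shows that $(\Bu,\BJ)$ is solenoidal and solves \eqref{eq:model_4st}. Hence the first bound holds for every solution, by the stability estimate of Theorem \ref{thm_div0_exist_con}, which comes from coercivity \eqref{eq:lemma_coer_con} together with $\Co(\Bu;\Bu,\Bu)=0$.

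\textbf{Pressure/potential bound.} By Lemma \ref{lemma_B_infsup_con},
\[
\Gamma|||(p,\phi)|||_{\Cf}\le \sup_{(\Bv,\Bd)}\frac{\Cb(p,\phi;\Bv,\Bd)}{|||(\Bv,\Bd)|||_{\Ca}}
=\sup_{(\Bv,\Bd)}\frac{\Ca(\Bu,\BJ;\Bv,\Bd)+\gamma(\Div\Bu,\Div\Bv)+\Co(\Bu;\Bu,\Bv)-\Cl(\Bf,\Bg;\Bv,\Bd)}{|||(\Bv,\Bd)|||_{\Ca}} .
\]
The terms are bounded as follows:
\begin{itemize}
\item $\Ca$ is bounded by \eqref{eq:lemma_con_con};
\item $\gamma|(\Div\Bu,\Div\Bv)|\le\gamma\|\Bu\|_{\BH^1}\|\Bv\|_{\BH^1}$, up to the harmless constant relating $\|\Div\cdot\|$ and $\|\nabla\cdot\|$, which is absorbed into the $\max$;
\item $\Co$ is bounded by \eqref{eq:lemma_O_conn}, giving $C_{\Co}|||(\Bu,\BJ)|||_{\Ca}^2$;
\item $\Cl$ is bounded by \eqref{eq:lemma_boundd_con}.
\end{itemize}
Collecting these gives the stated estimate.

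\textbf{Uniqueness.} Under \eqref{div0_unique_con}, Theorem \ref{thm_div0_unique_con} makes $(\Bu,\BJ)$ unique. Two solutions then have the same $(\Bu,\BJ)$, so their difference in $(p,\phi)$ satisfies $\Cb(\delta p,\delta\phi;\cdot,\cdot)=0$, and the inf-sup condition forces $\delta p=0$ and $\delta\phi=0$.

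The main obstacle is the recovery of $(p,\phi)$: the range argument must be stated carefully. This includes checking that the kernel of $\Cb$ is exactly the solenoidal product space, and that the inf-sup constant of the combined form $\Cb$, including the weight $\alpha$, gives surjectivity of the divergence onto $Q\times S$. The remaining steps are routine applications of the bounds already stated in the paper.
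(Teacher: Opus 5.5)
Your proposal is correct and follows essentially the same route as the paper. Like the paper, you take the solenoidal solution from Theorem \ref{thm_div0_exist_con}, recover $(p,\phi)$ through the inf-sup condition of Lemma \ref{lemma_B_infsup_con}, and bound them using the continuity of $\Ca$, $\Co$, $\Cl$. Beyond the paper, you spell out three steps it leaves implicit: the identification of the kernel of $\Cb$, and the converse step showing that every mixed solution is solenoidal, which is what justifies both ``for any solution'' and the uniqueness of $(p,\phi)$. Note that the grad-div term is identically zero because $\Div\Bu=0$, so you need not bound it or absorb any constant into the $\max$.
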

\begin{proof}
For $(\Bu,\BJ)\in\BV(\Div 0)\times\BD(\Div 0)$, \eqref{eq:model_6st} can be rewritten as: find $(p,\phi)\in Q\times S$ satisfy
\begin{align}
\label{eq:model_B}
\Cb(p,\phi;\Bv,\Bd) = \Ca(\Bu,\BJ;\Bv,\Bd) + \Co(\Bu;\Bu,\Bv) - \Cl(\Bf,\Bg;\Bv,\Bd)
\end{align}
for all $(\Bv,\Bd)\in(\BV\times\BD)/(\BV(\Div 0)\times\BD(\Div 0))$.

According to the inf-sup condition in Lemma \ref{lemma_B_infsup_con}, the problem \eqref{eq:model_B} has a unique solution. Using the continuity of $\Cl, \Ca, \Co$, we have
\begin{align*}
&\Gamma|||(p,\phi)|||_{\Cf}\\
\leq&\sup_{(\Bv,\Bd)\in\BV\times\BD}\frac{\Cb(p,\phi;\Bv,\Bd)}{|||(\Bv,\Bd)|||_{\Ca}}\\
\leq&\sup_{(\Bv,\Bd)\in\BV\times\BD}\frac{\Ca(\Bu,\BJ;\Bv,\Bd) + \Co(\Bu;\Bu,\Bv) - \Cl(\Bf,\Bg;\Bv,\Bd)}{|||(\Bv,\Bd)|||_{\Ca}}\\
\leq&\max\{R_e^{-1},\alpha, \alpha {\|\BB\|_{\BL^\infty(\DOmega)}},\gamma\}|||(\Bu,\BJ)|||_{\Ca}+C_{\Co}|||(\Bu,\BJ)|||_{\Ca}^2\\
& + \max\{1,\alpha\}|||(\Bf,\Bg)|||_{\Cf}.
\end{align*}
The proof is completed.
\end{proof}

\section{Charge-conservative parametric mixed finite element method}
\label{sec:MFEM}
\subsection{Meshes and mapping}
For clarity, we denote the original physical domain by $\DOmega$, the polyhedral reference domain by $\wh\DOmega_h$ and the approximated domain by $\DOmega_h$.
Here, $\wh\DOmega_h$ serves as an intermediate bridge, establishing a connection between $\DOmega$ and $\DOmega_h$.
Indeed, let $\wh\Ct_h$ be the the standard straight-edge triangulation of $\DOmega$.
Then $\wh\DOmega_h$ is the domain occupied by $\wh\Ct_h$.
We define $h = \max_{\widehat{K}\in \wh\Ct_h}\mathrm{diam}(\widehat{K}) $.
While $\DOmega_h$ is obtained from $\wh\DOmega_h$ through a continuous piecewise polynomial mapping $\BM_h$ of degree $k$.
In the lowest-order case ($k=1$), we have $\DOmega_h=\wh\DOmega_h$.
The triangulation of $\DOmega_h$ is denoted by $\Ct_h$, where each element is the image of a straight-sided reference tetrahedron under the mapping $\BM_h$. Further, triangulation of $\DOmega$ is denoted by $\widetilde{\Ct}_h$.

{For the error analysis, we make the following standard assumptions on the
geometry and the mesh \cite{fg16,ml86}:

\begin{enumerate}
\item[(A1)]  The mesh $\widehat{\Ct}_h$ is conforming, shape-regular and quasi-uniform tetrahedral mesh:
there exists a constant $\sigma>0$, independent of $h$, such that
$h_{\widehat{K}}/\rho_{\widehat{K}}\le \sigma$ and $h/\rho_{\widehat{K}}\le \sigma$ for all elements
$\widehat{K} \in \widehat{\Ct}_h$, where $h_{\widehat{K}}$ is the diameter of $\widehat{K}$ and
$\rho_{\widehat{K}}$ the radius of its inscribed ball.
\item[(A2)]  Each boundary face of $\widetilde{\Ct}_h$ is contained in one of the $C^{k+1}$ patches of $\partial\DOmega$, and each element of $\widetilde{\Ct}_h$ has at most one boundary face.
\item[(A3)]  There exists a continuous and invertible mapping $\BM:\wh{\DOmega}_h\to\DOmega$ with $\BM\in C^{k+1}(\wh{\DOmega}_h)^3$.
\item[(A4)] For sufficiently small $h$, the isoparametric mapping $\BM_h$ is bijective and satisfies the uniform bounds \eqref{eq:property_1st:Mh}, with constants independent of $h$.
\item[(A5)] The Jacobian determinant of $\BM_h$ satisfies $\det\bbJ_{\BM_h}\ge\delta$ for some positive constant $\delta$ independent of $h$.
\end{enumerate}
}

By the assumptions and standard error estimates \cite{TEbook}, we have
\begin{align}\label{eq:property_2st}
\|\BM_h-\BM\|_{\BL^{\infty}(\wh\DOmega_h)}\lesssim h^{k+1},\\
\label{eq:property_1st:M}
\|\bbJ_{\BM}\|_{\BW^{k,\infty}(\wh\DOmega_h)}\lesssim1,\;\;\;
\|\bbJ_{\BM^{-1}}\|_{\BW^{k,\infty}(\DOmega)}\lesssim1,
\end{align}
where $\bbJ_{\BM}$ and $\bbJ_{\BM^{-1}}$ denote the Jacobian of $\BM$ and $\BM^{-1}$, respectively. For $k>1$, $\BM_h$ differs from the identity mapping.
Under the smoothness assumption of $\partial\DOmega$, the distance between the approximate boundary $\partial\DOmega_h$ and the true boundary $\partial\DOmega$  is of order $k+1$. {For sufficiently small $h$, the mapping $\BM_h$ is invertible and its Jacobian $\bbJ_{\BM_h}$ satisfies the estimates \cite{fsg14}
\begin{align}\label{eq:property_1st:Mh}
\|\bbJ_{\BM_h}\|_{\BW^{k,\infty}(\wh\DOmega_h)}\lesssim1,\;\;\;
\|\bbJ_{\BM_h^{-1}}\|_{\BW^{k,\infty}(\DOmega_h)}\lesssim1,\;\;\;
\|\bbJ_{\BM_h}-\bbI\|_{\BL^{\infty}(\wh\DOmega_h)}\lesssim h.
\end{align}}

Due to the loss of one order of accuracy when approximating derivatives, it follows that
\begin{align}
\label{eq:property_3st}
\|\bbJ_{\BM_h}-\bbJ_{\BM}\|_{\BL^{\infty}(\wh\DOmega_h)}\lesssim h^k.
\end{align}
\begin{remark}
Accordingly, it is convenient to introduce the compound mapping $\Phi_h=\BM_h\circ\BM^{-1}$, which maps $\DOmega$ to $\DOmega_h$.
\end{remark}

\begin{remark}
Throughout the paper, the notation $\lesssim $ means that there exists a constant $C>0$, independent of $h$ and $k$, such that $a\le C b$. The constant $C$ may depend on $\DOmega$, and the physical parameters $R_e$, $\alpha$, $\gamma$, and $\|\BB\|_{\BL^\infty}$; such dependence is indicated explicitly where relevant.
\end{remark}

\subsection{Preliminaries}
The standard finite element spaces for the discrete velocity, pressure, current density, and electric potential on the straight-edged tetrahedral mesh $\widehat{\Ct}_h$ are defined as
($k \geq 2$)
\begin{align*}
&\wh\BV_h^k = \{\hBv_h\in\BH^1(\wh\DOmega_h):\hBv_h|_{\hat{K}}\in\BP_k(\hat{K}),\ \forall \hat{K}\in\wh\Ct_h\},\\
&\wh Q_h^{k-1} = \{\hat{q}_h\in H^1(\wh\DOmega_h):\hat{q}_h|_{\hat{K}}\in P_{k-1}(\hat{K}),\ \forall \hat{K}\in\wh\Ct_h\},\\
&\wh\BD_h^{k-1} = \{\hBd_h\in\BH(\wh\Div,\wh\DOmega_h):\hBd_h|_{\hat{K}}\in\BP_{k-1}(\hat{K}),\ \forall \hat{K}\in\wh\Ct_h\},\\
&\wh S_h^{k-2} = \{\hat{\varphi}_h\in L^2(\wh\DOmega_h):\hat{\varphi}_h|_{\hat{K}}\in P_{k-2}(\hat{K}),\ \forall \hat{K}\in\wh\Ct_h\}.
\end{align*}
These spaces satisfy the following inf-sup conditions \cite{MFEMbook,NSbook}
\begin{align}
\label{infsup_dis_up_hat}
\sup_{\bm{0}\neq\hBv_h\in\wh\BV_{h,0}^k}\frac{b_q(\hat{q}_h,\hBv_h)_{\hat{h}}}{\|\hBv_h\|_{\BH^1(\wh\DOmega_h)}}
\geq\beta_3\|\hat{q}_h\|_{L^2(\wh\DOmega_h)}>0,\\
\label{infsup_dis_Jphi_hat}
\sup_{\bm{0}\neq\hBd_h\in\wh\BD_h^{k-1}}\frac{b_\varphi(\hat{\varphi}_h,\hBd_h)_{\hat{h}}}{\|\hBd_h\|_{\BH(\wh\Div,\wh\DOmega_h)}}
\geq\beta_4\|\hat{\varphi}_h\|_{L^2(\wh\DOmega_h)}>0,
\end{align}
where $\beta_3, \beta_4$ are positive constants depending only on $\wh\DOmega_h$.

Based on the Piola's transformation \cite{fg16} and isoparametric mapping $\BM_h$,
the parametric finite element spaces for $(\Bu_h, p_h, \BJ_h, \phi_h)$ on the curved mesh $\Ct_h$ are defined as
\begin{align*}
\BV_h^k &= \{\hBv_h\circ\BM_h^{-1}:\hBv_h\in\wh\BV_h^k\},\\
Q_h^{k-1} &= \{\hat{q}_h\circ\BM_h^{-1}:\hat{q}_h\in\wh Q_h^{k-1}\},\\
\BD_h^{k-1} &= \{\frac{1}{\det{\bbJ}_{\BM_h}}{\bbJ}_{\BM_h}\hBd_h\circ\BM_h^{-1}:\hBd_h\in\wh\BD_h^{k-1} \},\\
S_h^{k-2} &= \{\hat{\varphi}_h\circ\BM_h^{-1}:\hat{\varphi}_h\in\wh S_h^{k-2}\}.
\end{align*}
For notational convenience, we introduce the subspaces
\begin{align*}
\wh\BV_{h,0}^k &= \{\hBv_h\in\wh\BV_h^k:\hBv_h=0\; \textrm{on}\; \partial\wh\DOmega_h\},\\
\BV_{h,0}^k &= \{\Bv_h\in\BV_h^k:\Bv_h=0\; \textrm{on}\; \partial\DOmega_h\},\\
\BV_{h,0}^k(\Div 0)&=\{\Bv_h\in\BV_{h,0}^k:b_q(q_h,\Bv_h)_h = 0,\forall q_h\in Q_h^{k-1}\},\\
\BD_h^{k-1}(\Div 0)&=\{\Bd_h\in\BD_h^{k-1}:b_\varphi(\varphi_h,\Bd_h)_h = 0,\forall \varphi_h\in S_h^{k-2}\}.
\end{align*}
It should be noted that the functions in the spaces $\BV_{h,0}^k, Q_h^{k-1}, \BD_h^{k-1}$, and $S_h^{k-2}$ are no longer piecewise polynomials, but rather compositions of polynomial functions and the inverse of the mapping $\BM_h$.

\begin{remark}
The discrete pressure space $Q_h^{k-1}$ does not explicitly enforce the zero-mean condition; the pressure is determined up to a constant and fixed by post-processing. The potential space is $S_h^{k-2}\subset L^2(\DOmega_h)$, and its homogeneous boundary condition is enforced weakly through the variational formulation, consistent with the continuous setting.
\end{remark}

According to the chain rule,
\begin{align}
&\nabla\Bv_h(\Bx) =\wh\nabla\hBv_h(\hBx)\bbJ_{\BM_h}^{-1},\label{eq:trans_grad_v}\\
&\Div\Bv_h(\Bx) = \frac{1}{\det\bbJ_{\BM_h}}\wh\Div((\det\bbJ_{\BM_h})\hBv_h\bbJ_{\BM_h}^{-1})(\hBx),\ \forall\Bv_h\in\BV_{h,0}^k,\label{eq:trans_div_v}\\
&\Div\Bd_h(\Bx) = \frac{1}{\det\bbJ_{\BM_h}}\wh\Div\,\hBd_h(\hBx),\ \forall\Bd_h\in\BD_h^{k-1},\label{eq:trans_div_d}
\end{align}
where $\wh\nabla$ and $\wh\Div$ are the gradient and divergence operators with respect to $\hBx$, and all vector functions are understood as row vectors.

\begin{remark}
To avoid confusion, unlike $(\cdot,\cdot)$, which denotes the inner product on $\DOmega$, the symbol $(\cdot,\cdot)_h$ represents the inner products on $\DOmega_h$.
\end{remark}
\begin{remark}\label{rem:divuh}
The divergence of velocity can be derived in detail as follows. The derivation relies on a fundamental property of the cofactor matrix, for any fixed row index $i$,
\begin{align*}
\sum_{j=1}^3
\frac{\partial}{\partial\hat{x}_j}[(\det\bbJ_{\BM_h})(\bbJ_{\BM_h}^{-1})_{ji}]=0.
\end{align*}
Using this identity, we obtain
\begin{align*}
&\frac{1}{\det\bbJ_{\BM_h}}\wh\Div((\det\bbJ_{\BM_h})\hBv_h\bbJ_{\BM_h}^{-1})\nn\\
= &\frac{1}{\det\bbJ_{\BM_h}}\sum_{i,j=1}^3\frac{\partial}{\partial\hat{x}_j}
((\det\bbJ_{\BM_h})\hat{v}_i(\bbJ_{\BM_h}^{-1})_{ji})\nn\\
= &\frac{1}{\det\bbJ_{\BM_h}}\sum_{i=1}^3\sum_{j=1}^3
\frac{\partial}{\partial\hat{x}_j}[(\det\bbJ_{\BM_h})(\bbJ_{\BM_h}^{-1})_{ji}]\hat{v}_i
+ \frac{1}{\det\bbJ_{\BM_h}}\sum_{i,j=1}^3
((\det\bbJ_{\BM_h})\frac{\partial\hat{v}_i}{\partial\hat{x}_j}(\bbJ_{\BM_h}^{-1})_{ji})\nn\\
= &\sum_{i,j=1}^3\frac{\partial\hat{v}_i}{\partial\hat{x}_j}(\bbJ_{\BM_h}^{-1})_{ji}
= \mathrm{tr}(\wh\nabla\hBv_h\bbJ_{\BM_h}^{-1})=\Div\Bv_h.
\end{align*}
\end{remark}

Next, in order to prove the discrete inf-sup condition on $\DOmega_h$, we firstly present the norm equivalence.
\begin{lemma}\label{lem:equi_u}
There exist constants $C_1,C_2>0$, such that for any $\Bv_h\in\BV_{h,0}^{k}$ and $\hBv_h\in\wh\BV_{h,0}^{k}$
the following inequalities hold
\begin{align}\label{lem:eq_equi_u}
C_1\|\hBv_h\|_{\BH^1(\wh\DOmega_h)}^2\leq \|\Bv_h\|_{\BH^1(\DOmega_h)}^2\leq C_2\|\hBv_h\|_{\BH^1(\wh\DOmega_h)}^2.
\end{align}
\end{lemma}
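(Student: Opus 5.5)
We read the statement with $\Bv_h=\hBv_h\circ\BM_h^{-1}$, the natural pairing of $\BV_{h,0}^k$ and $\wh\BV_{h,0}^k$. The proof is a change of variables $\Bx=\BM_h(\hBx)$, done element by element and controlled by the uniform bounds \eqref{eq:property_1st:Mh} and assumption (A5). The $L^2$ part and the gradient part are treated separately and then added.

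For the $L^2$ part, on each $\hat K\in\wh\Ct_h$ with $K=\BM_h(\hat K)$ we write
\begin{align*}
\|\Bv_h\|_{\BL^2(K)}^2=\int_{\hat K}|\hBv_h(\hBx)|^2\,|\det\bbJ_{\BM_h}|\,d\hBx .
\end{align*}
From \eqref{eq:property_1st:Mh}, $\|\bbJ_{\BM_h}\|_{\BL^\infty}\lesssim 1$. Since the determinant is a cubic polynomial in the entries of $\bbJ_{\BM_h}$, this gives $|\det\bbJ_{\BM_h}|\lesssim1$. Assumption (A5) gives $\det\bbJ_{\BM_h}\ge\delta$. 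Hence $\delta\|\hBv_h\|_{\BL^2(\hat K)}^2\le\|\Bv_h\|_{\BL^2(K)}^2\lesssim\|\hBv_h\|_{\BL^2(\hat K)}^2$. Summing over elements gives the $L^2$ equivalence, because $\BM_h$ is a bijection by (A4) and the images $K$ tile $\DOmega_h$.

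For the gradient part, we use \eqref{eq:trans_grad_v}, $\nabla\Bv_h=(\wh\nabla\hBv_h\,\bbJ_{\BM_h}^{-1})\circ\BM_h^{-1}$. This gives
\begin{align*}
\|\nabla\Bv_h\|_{\BL^2(K)}^2=\int_{\hat K}|\wh\nabla\hBv_h\,\bbJ_{\BM_h}^{-1}|^2|\det\bbJ_{\BM_h}|\,d\hBx\lesssim\|\bbJ_{\BM_h}^{-1}\|_{\BL^\infty}^2\|\wh\nabla\hBv_h\|_{\BL^2(\hat K)}^2 .
\end{align*}
We need a uniform bound on $\bbJ_{\BM_h}^{-1}$. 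Since $\bbJ_{\BM_h^{-1}}=\bbJ_{\BM_h}^{-1}\circ\BM_h^{-1}$, this follows either directly from \eqref{eq:property_1st:Mh}, or from the cofactor formula $\bbJ^{-1}=\mathrm{cof}(\bbJ)^T/\det\bbJ$ together with (A5). For the reverse direction we write $\wh\nabla\hBv_h=(\nabla\Bv_h\circ\BM_h)\bbJ_{\BM_h}$ and use $1/|\det\bbJ_{\BM_h}|\le\delta^{-1}$. This gives $\|\wh\nabla\hBv_h\|_{\BL^2(\hat K)}^2\le\delta^{-1}\|\bbJ_{\BM_h}\|_{\BL^\infty}^2\|\nabla\Bv_h\|_{\BL^2(K)}^2$.

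Summing over elements and adding the two parts gives \eqref{lem:eq_equi_u}. The constants $C_1,C_2$ depend only on $\delta$ and the constants in \eqref{eq:property_1st:Mh}. The only delicate point is that these constants must not depend on $h$, that is, we need uniform two-sided control of $\det\bbJ_{\BM_h}$ and of $\bbJ_{\BM_h}^{\pm1}$. Assumptions (A4) and (A5) supply exactly this. An alternative route is $\|\bbJ_{\BM_h}-\bbI\|_{\BL^\infty}\lesssim h$: for small $h$ it gives $\bbJ_{\BM_h}$ close to the identity, hence a Neumann-series bound on the inverse and $\det\bbJ_{\BM_h}\in[1/2,2]$.
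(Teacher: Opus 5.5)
Your proposal is correct and follows essentially the same route as the paper. Both arguments change variables via $\Bx=\BM_h(\hBx)$ and use the uniform bounds $\delta\le\det\bbJ_{\BM_h}\lesssim 1$ and $\|\bbJ_{\BM_h}^{\pm1}\|_{\BL^\infty}\lesssim 1$ from (A4)--(A5); your identity $\wh\nabla\hBv_h=(\nabla\Bv_h\circ\BM_h)\bbJ_{\BM_h}$ for the lower bound is the paper's inequality $|\xi\bbJ_{\BM_h}^{-1}|\ge\|\bbJ_{\BM_h}\|^{-1}|\xi|$ written the other way, and the element-by-element bookkeeping and the Neumann-series alternative are harmless additions.
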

\begin{proof}
Recall that $\Bv_h = \hBv_h\circ\BM_h^{-1}$. {By Assumptions (A4)-(A5), the mapping $\BM_h$ satisfies
\begin{align*}
\|\bbJ_{\BM_h}^{-1}\|_{\BL^{\infty}(\wh\DOmega_h)} \lesssim 1,\;\;
\delta<\det\bbJ_{\BM_h} \lesssim 1.
\end{align*}
We transform the $\BH^1$-norm of $\Bv_h$ from $\DOmega_h$ to $\wh\DOmega_h$:
\begin{align}
\|\Bv_h\|_{\BH^1(\DOmega_h)}^2
= &\int_{\DOmega_h}(|\Bv_h|^2 + |\nabla\Bv_h|^2)\D\Bx\nn\\
= &\int_{\wh\DOmega_h}(|\hBv_h|^2 + |\wh\nabla\hBv_h\,\bbJ_{\BM_h}^{-1}|^2)(\det\bbJ_{\BM_h})\D\hBx.
\end{align}
For the upper bound, using $\det\bbJ_{\BM_h}\lesssim 1$ and $\|\bbJ_{\BM_h}^{-1}\|_{\BL^\infty(\wh\DOmega_h)}\lesssim 1$, we have
\begin{align}\label{upper-bound}
\|\Bv_h\|_{\BH^1(\DOmega_h)}^2
\lesssim \|\hBv_h\|_{\BL^2(\wh\DOmega_h)}^2
+ \|\wh\nabla\hBv_h\|_{\BL^2(\wh\DOmega_h)}^2
\le C_2\|\hBv_h\|_{\BH^1(\wh\DOmega_h)}^2.
\end{align}
For the lower bound, note that $\det\bbJ_{\BM_h}\ge\delta$. For any row vector $\xi\in\mathbb{R}^3$, the inequality
\[|\xi \bbJ_{\BM_h}^{-1}| \ge \|\bbJ_{\BM_h}\|_{\BL^\infty(\wh\DOmega_h)}^{-1}|\xi|,\]
holds for invertible matrix $\bbJ_{\BM_h}$.
Together with $\|\bbJ_{\BM_h}\|_{\BL^\infty(\hat{\Omega}_h)}\le C$ from \eqref{eq:property_1st:Mh}, we obtain
\begin{align}\label{lower-bound}
\|\Bv_h\|_{\BH^1(\DOmega_h)}^2
&\ge \delta \|\hBv_h\|_{\BL^2(\wh\DOmega_h)}^2
+ \delta C^{-2} \|\wh{\nabla}\hBv_h\|_{\BL^2(\wh\DOmega_h)}^2\nn\\
&\ge C_1 \|\hBv_h\|_{\BH^1(\wh\DOmega_h)}^2,
\end{align}
where $C_1=\min\{\delta,\delta C^{-2}\}>0$.

Combining \eqref{upper-bound} and \eqref{lower-bound} completes the proof.}
\end{proof}

Similar to the proof of Lemma \ref{lem:equi_u}, we establish the following results.
\begin{lemma}
There exist constants $C_3,C_4,C_5,C_6,C_7,C_8>0$ such that for any $q_h\in Q_h^{k-1},\Bd_h\in \BD_h^{k-1},\varphi_h\in S_h^{k-2}$ and $\hat{q}_h\in \wh Q_h^{k-1},\hBd_h\in \wh \BD_h^{k-1},\hat{\varphi}_h\in \wh S_h^{k-2}$
\begin{align}
\label{lem:eq_equi_p}
&C_3\|\hat{q}_h\|_{L^2(\wh\DOmega_h)}^2
\leq\|q_h\|_{L^2(\DOmega_h)}^2
\leq C_4\|\hat{q}_h\|_{L^2(\wh\DOmega_h)}^2,\\
\label{lem:eq_equi_j}
&C_5\|\hBd_h\|_{\BH(\wh\Div,\wh\DOmega_h)}^2
\leq\|\Bd_h\|_{\BH(\Div,\DOmega_h)}^2
\leq C_6\|\hBd_h\|_{\BH(\wh\Div,\wh\DOmega_h)}^2,\\
\label{lem:eq_equi_phi}
&C_7\|\hat{\varphi}_h\|_{L^2(\wh\DOmega_h)}^2
\leq\|\varphi_h\|_{L^2(\DOmega_h)}^2
\leq C_8\|\hat{\varphi}_h\|_{L^2(\wh\DOmega_h)}^2.
\end{align}
\end{lemma}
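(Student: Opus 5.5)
The plan is to mirror the proof of Lemma \ref{lem:equi_u}: pull every integral back to $\wh\DOmega_h$ by the change of variables $\Bx=\BM_h(\hBx)$, and bound the Jacobian factors that appear using Assumptions (A4)--(A5) together with \eqref{eq:property_1st:Mh}. Throughout we use $\delta\le\det\bbJ_{\BM_h}\lesssim 1$ and $\|\bbJ_{\BM_h}\|_{\BL^\infty(\wh\DOmega_h)}+\|\bbJ_{\BM_h}^{-1}\|_{\BL^\infty(\wh\DOmega_h)}\lesssim 1$.

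\textbf{Scalar spaces.} For $q_h=\hat q_h\circ\BM_h^{-1}$ we have
\[
\|q_h\|_{L^2(\DOmega_h)}^2=\int_{\wh\DOmega_h}|\hat q_h|^2\det\bbJ_{\BM_h}\,\D\hBx .
\]
The Jacobian bounds then give \eqref{lem:eq_equi_p} directly, with $C_3=\delta$ and $C_4$ the uniform upper bound of $\det\bbJ_{\BM_h}$. The identical computation for $\varphi_h=\hat\varphi_h\circ\BM_h^{-1}$ gives \eqref{lem:eq_equi_phi}.

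\textbf{Current density space.} Here we write $\Bd_h\circ\BM_h=(\det\bbJ_{\BM_h})^{-1}\bbJ_{\BM_h}\hBd_h$ and treat the two parts of the $\BH(\Div)$ norm separately.

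For the $L^2$ part,
\[
\|\Bd_h\|_{\BL^2(\DOmega_h)}^2=\int_{\wh\DOmega_h}(\det\bbJ_{\BM_h})^{-1}|\bbJ_{\BM_h}\hBd_h|^2\,\D\hBx .
\]
For the upper bound we use $|\bbJ_{\BM_h}\xi|\le\|\bbJ_{\BM_h}\|_{\BL^\infty}|\xi|$ together with $(\det\bbJ_{\BM_h})^{-1}\le\delta^{-1}$. For the lower bound we use $|\bbJ_{\BM_h}\xi|\ge\|\bbJ_{\BM_h}^{-1}\|_{\BL^\infty}^{-1}|\xi|$ together with $(\det\bbJ_{\BM_h})^{-1}\gtrsim 1$.

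For the divergence part, the Piola identity \eqref{eq:trans_div_d} gives
\[
\|\Div\Bd_h\|_{L^2(\DOmega_h)}^2=\int_{\wh\DOmega_h}(\det\bbJ_{\BM_h})^{-1}|\wh\Div\hBd_h|^2\,\D\hBx ,
\]
which is two-sided equivalent to $\|\wh\Div\hBd_h\|_{L^2(\wh\DOmega_h)}^2$ by the same determinant bounds.

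Adding the $L^2$ and divergence estimates yields \eqref{lem:eq_equi_j}, with $C_5,C_6$ depending only on $\delta$ and the constants in \eqref{eq:property_1st:Mh}.

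\textbf{Main point to check.} The only nonroutine step is the divergence term. One must make sure that no derivatives of $\bbJ_{\BM_h}$ enter, as they would if one differentiated the Piola map naively. Identity \eqref{eq:trans_div_d}, which rests on the cofactor identity recorded in Remark \ref{rem:divuh}, removes those terms entirely, so only $L^\infty$ bounds on $\bbJ_{\BM_h}$ and its inverse are needed. All constants are therefore independent of $h$, since the bounds in (A4)--(A5) are uniform for sufficiently small $h$.
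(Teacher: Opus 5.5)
Your proposal is correct and takes the same route as the paper, which only remarks that the result follows like Lemma~\ref{lem:equi_u}: pull back to $\wh\DOmega_h$ and apply the uniform bounds on $\det\bbJ_{\BM_h}$, $\bbJ_{\BM_h}$ and $\bbJ_{\BM_h}^{-1}$. Your use of the Piola identity \eqref{eq:trans_div_d}, which puts the weight $(\det\bbJ_{\BM_h})^{-1}$ on both parts of the $\BH(\Div)$ norm, supplies the detail the paper leaves implicit.
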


{\begin{lemma}\label{lem:infsup_up_omega_h}
For sufficiently small $h$, there exists a constant $\beta_5>0$, such that for any $q_h\in Q_h^{k-1}$,
\begin{align}
\label{infsup_dis_up}
\sup_{\bm{0}\neq\Bv_h\in\BV_{h,0}^k}\frac{b_q(q_h,\Bv_h)_h}{\|\Bv_h\|_{\BH^1(\DOmega_h)}}
\geq \beta_5\|q_h\|_{L^2(\DOmega_h)}.
\end{align}
\end{lemma}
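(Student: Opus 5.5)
Given $q_h\in Q_h^{k-1}$, we write $q_h=\hat q_h\circ\BM_h^{-1}$ and work on the polyhedral domain $\wh\DOmega_h$, where the inf-sup condition \eqref{infsup_dis_up_hat} is available. The aim is to show that $b_q(\cdot,\cdot)_h$ pulled back to $\wh\DOmega_h$ equals the reference form $b_q(\cdot,\cdot)_{\hat h}$ plus a perturbation of size $O(h)$, and then to absorb that perturbation for small $h$. (We take $q_h$ modulo constants, i.e. with the zero-mean normalization consistent with $Q=L^2_0$; the same normalization is used for $\hat q_h$ in \eqref{infsup_dis_up_hat}.)

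\textbf{Step 1 (pull-back).} For $\Bv_h=\hBv_h\circ\BM_h^{-1}$, the chain rule \eqref{eq:trans_grad_v} together with Remark \ref{rem:divuh} gives $\Div\Bv_h=\mathrm{tr}(\wh\nabla\hBv_h\,\bbJ_{\BM_h}^{-1})$. Changing variables,
\begin{align*}
b_q(q_h,\Bv_h)_h=\int_{\wh\DOmega_h}\hat q_h\,\mathrm{tr}(\wh\nabla\hBv_h\,\bbJ_{\BM_h}^{-1})\det\bbJ_{\BM_h}\,\D\hBx .
\end{align*}
We split the integrand as $\hat q_h\,\wh\Div\hBv_h+\hat q_h\,\mathrm{tr}\big(\wh\nabla\hBv_h(\det\bbJ_{\BM_h}\bbJ_{\BM_h}^{-1}-\bbI)\big)$. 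The first part is exactly $b_q(\hat q_h,\hBv_h)_{\hat h}$.

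\textbf{Step 2 (perturbation bound).} By \eqref{eq:property_1st:Mh}, $\|\bbJ_{\BM_h}-\bbI\|_{\BL^\infty}\lesssim h$, so $\|\bbJ_{\BM_h}^{-1}-\bbI\|_{\BL^\infty}\lesssim h$ and $|\det\bbJ_{\BM_h}-1|\lesssim h$ for small $h$, using the boundedness of $\bbJ_{\BM_h}^{-1}$ from (A4)--(A5). Consequently $\|\det\bbJ_{\BM_h}\bbJ_{\BM_h}^{-1}-\bbI\|_{\BL^\infty}\le C_\ast h$, and the remainder term is bounded by $C_\ast h\|\hat q_h\|_{L^2(\wh\DOmega_h)}\|\hBv_h\|_{\BH^1(\wh\DOmega_h)}$.

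\textbf{Step 3 (conclusion).} Since $\BM_h$ is a bijection, $\Bv_h\in\BV_{h,0}^k$ if and only if $\hBv_h\in\wh\BV_{h,0}^k$, because the boundary maps to the boundary. Choose $\hBv_h\neq\bm{0}$ attaining, up to a factor, the supremum in \eqref{infsup_dis_up_hat}. Then
\begin{align*}
b_q(q_h,\Bv_h)_h\ge(\beta_3-C_\ast h)\|\hat q_h\|_{L^2(\wh\DOmega_h)}\|\hBv_h\|_{\BH^1(\wh\DOmega_h)}.
\end{align*}
For $h\le\beta_3/(2C_\ast)$ this is at least $\tfrac{\beta_3}{2}\|\hat q_h\|\,\|\hBv_h\|$. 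The norm equivalences \eqref{lem:eq_equi_u} and \eqref{lem:eq_equi_p} then convert this into \eqref{infsup_dis_up} with $\beta_5=\tfrac{\beta_3}{2}\sqrt{C_1/C_4}$, where $C_1$ comes from $\|\hBv_h\|\ge C_2^{-1/2}\|\Bv_h\|$ and $\|\hat q_h\|\ge C_4^{-1/2}\|q_h\|$, so the constants combine as $\beta_3/(2\sqrt{C_2C_4})$.

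\textbf{Main obstacle.} The delicate point is the mean-value normalization. If $q_h$ has zero mean on $\DOmega_h$, then $\hat q_h$ has zero mean with respect to the weight $\det\bbJ_{\BM_h}$, not the plain Lebesgue measure. We therefore apply \eqref{infsup_dis_up_hat} to $\hat q_h-\bar{\hat q}_h$, where $\bar{\hat q}_h$ is the unweighted mean. Because $\hBv_h$ vanishes on $\partial\wh\DOmega_h$, the constant $\bar{\hat q}_h$ contributes nothing to $b_q(\cdot,\cdot)_{\hat h}$. It remains to check $\|\hat q_h-\bar{\hat q}_h\|\ge(1-Ch)\|\hat q_h\|$, which follows from $|\bar{\hat q}_h|\lesssim h\|\hat q_h\|$, itself a consequence of $|\det\bbJ_{\BM_h}-1|\lesssim h$. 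The resulting extra $O(h)$ loss is absorbed in the same way as in Step 3.
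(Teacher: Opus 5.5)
Your proposal is correct and follows the paper's proof essentially step for step. You pull back to $\wh\DOmega_h$, split off the reference divergence, and bound the remainder coming from $\bbJ_{\BM_h}^{-1}-\bbI$ and $\det\bbJ_{\BM_h}-1$ by $C_* h\|\hat q_h\|_{L^2(\wh\DOmega_h)}\|\hBv_h\|_{\BH^1(\wh\DOmega_h)}$. You then absorb it for $h\le\beta_3/(2C_*)$ and convert norms to get $\beta_5=\beta_3/(2\sqrt{C_2C_4})$; your intermediate expression $\tfrac{\beta_3}{2}\sqrt{C_1/C_4}$ is a slip.

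Your extra treatment of the mean-value normalization is an improvement, not a different route. The paper applies \eqref{infsup_dis_up_hat} directly to $\hat q_h$, even though constants lie in $Q_h^{k-1}$ and satisfy $b_q(c,\Bv_h)_h=0$. Your argument comparing the weighted and unweighted means, giving $\|\bar{\hat q}_h\|_{L^2(\wh\DOmega_h)}\lesssim h\|\hat q_h\|_{L^2(\wh\DOmega_h)}$, is exactly what closes that gap.
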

\begin{proof}
In the following proof, we assume that $b_q(q_h,\Bv_h)_h \geq 0$, otherwise one can reset the integrand by $-\Bv_h$.
Set $q_h=\hat  q_h\circ\BM_h^{-1}$ and $\Bv_h=\hBv_h\circ\BM_h^{-1}$. Using the transformations in Remark~\ref{rem:divuh},
\begin{align}\label{eq:bq}
&b_q(q_h,\Bv_h)_h
=\int_{\DOmega_h}q_h\,\Div\Bv_h\,\D\Bx\nn\\
=&\int_{\wh\DOmega_h}
\hat q_h\,\mathrm{tr}(\wh\nabla\hBv_h\bbJ_{\BM_h}^{-1})(\det\bbJ_{\BM_h}-1+1)\,\D\hBx\nn\\
=&\int_{\wh\DOmega_h}
\hat q_h\,\mathrm{tr}(\wh\nabla\hBv_h\bbJ_{\BM_h}^{-1})\,\D\hBx +
\int_{\wh\DOmega_h}
\hat q_h\,\mathrm{tr}(\wh\nabla\hBv_h\bbJ_{\BM_h}^{-1})(\det\bbJ_{\BM_h}-1)\,\D\hBx
\nn\\
=&\int_{\wh\DOmega_h}
\hat q_h\,\mathrm{tr}(\wh\nabla\hBv_h\bbJ_{\BM_h}^{-1}-\wh\nabla\hBv_h
+\wh\nabla\hBv_h)\,\D\hBx +
\int_{\wh\DOmega_h}
\hat q_h\,\mathrm{tr}(\wh\nabla\hBv_h\bbJ_{\BM_h}^{-1})(\det\bbJ_{\BM_h}-1)\,\D\hBx
\nn\\
=&\int_{\wh\DOmega_h}\hat q_h\,\wh\Div\hBv_h\,\D\hBx+\int_{\wh\DOmega_h}
\hat q_h\,\mathrm{tr}(\wh\nabla\hBv_h(\bbJ_{\BM_h}^{-1}-\bbI))\,\D\hBx \nn\\
&+\int_{\wh\DOmega_h}
\hat q_h\,\mathrm{tr}(\wh\nabla\hBv_h\bbJ_{\BM_h}^{-1})(\det\bbJ_{\BM_h}-1)\,\D\hBx.
\end{align}
From \eqref{eq:property_1st:Mh}, we can have
\[
\|\det\bbJ_{\BM_h}-\bm{1}\|_{\BL^{\infty}(\wh\DOmega_h)}\lesssim h,\qquad\|\bbJ_{\BM_h}^{-1}-\bbI\|_{\BL^{\infty}(\wh\DOmega_h)}\lesssim h.
\]
Using the definition of trace, the above estimate and Cauchy-Schwarz inequalities,
there exists constant $C_*$ such that,
\begin{align}\label{eq:trace}
&\left|\int_{\wh\DOmega_h}
\hat q_h\,\mathrm{tr}(\wh\nabla\hBv_h(\bbJ_{\BM_h}^{-1}-\bbI))\,\D\hBx +
\int_{\wh\DOmega_h}
\hat q_h\,\mathrm{tr}(\wh\nabla\hBv_h\bbJ_{\BM_h}^{-1})(\det\bbJ_{\BM_h}-1)\,\D\hBx\nn\right|\\
\leq&C_* h \|\hat q_h\|_{L^2(\wh\DOmega_h)}\|\hBv_h\|_{\BH^1(\wh\DOmega_h)},
\end{align}
From \eqref{lem:eq_equi_u} and \eqref{lem:eq_equi_p}, note that the following norm equivalence estimates hold,
\[\sqrt{C_2}\|\hBv_h\|_{\BH^1(\wh\DOmega_h)}\geq \|\Bv_h\|_{\BH^1(\DOmega_h)},\qquad
\|\hat q_h\|_{L^2(\wh\DOmega_h)}\geq \frac{1}{\sqrt{C_4}}\|q_h\|_{L^2(\DOmega_h)},\]
Because $b_q(q_h,\Bv_h)_h \geq 0$ holds. Inserting \eqref{eq:trace} into \eqref{eq:bq} yields
\begin{align*}
\frac{b_q(q_h,\Bv_h)_h}{\|\Bv_h\|_{\BH^1(\DOmega_h)}}
\geq &\frac{1}{\sqrt{C_2}} \frac{b_q(q_h,\Bv_h)_h}{\|\hBv_h\|_{\BH^1(\wh\DOmega_h)}}\nn\\
\geq &\frac{1}{\sqrt{C_2}}\frac{\int_{\wh\DOmega_h}\hat q_h\,\wh\Div\hBv_h\,\D\hBx}{\|\hBv_h\|_{\BH^1(\wh\DOmega_h)}}
-\frac{C_*}{\sqrt{C_2}} h\|\hat q_h\|_{L^2(\wh\DOmega_h)}.
\end{align*}
Using \eqref{infsup_dis_up_hat} and taking the supremum over $\Bv_h\in\BV_{h,0}^k$, we have
\begin{equation}
\sup_{\bm{0}\neq\Bv_h\in\BV_{h,0}^k}\frac{b_q(q_h,\Bv_h)_h}{\|\Bv_h\|_{\BH^1(\DOmega_h)}}
\geq \frac{1}{\sqrt{C_2}}  \left(\beta_3 - C_* h\right)\|\hat{q}_h\|_{L^2(\wh\DOmega_h)}.
\end{equation}
Choosing $h\le\frac{\beta_3}{2C_*}$, we obtain
\begin{equation}
\sup_{\bm{0}\neq\Bv_h\in\BV_{h,0}^k}\frac{b_q(q_h,\Bv_h)_h}{\|\Bv_h\|_{\BH^1(\DOmega_h)}}
\geq \frac{\beta_3}{2\sqrt{C_2}} \|\hat{q}_h\|_{L^2(\wh\DOmega_h)}
\geq \frac{\beta_3}{2\sqrt{C_2C_4}}\|q_h\|_{L^2(\DOmega_h)} .
\end{equation}
Letting $\beta_5 := \frac{\beta_3}{2\sqrt{C_2C_4}}$ the proof is completed.
\end{proof}
}
{
\begin{lemma}\label{lem:infsup_Jphi_omega_h}
There exists a constant $\beta_6>0$, independent of $h$, such that for any $\varphi_h\in S_h^{k-2}$,
\begin{align}
\label{infsup_dis_Jphi}
\sup_{\bm{0}\neq\Bd_h\in\BD_h^{k-1}}\frac{b_\varphi(\varphi_h,\Bd_h)_h}{\|\Bd_h\|_{\BH(\Div,\DOmega_h)}}
\geq \beta_6\|\varphi_h\|_{L^2(\DOmega_h)}>0.
\end{align}
\end{lemma}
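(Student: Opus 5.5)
The plan is to exploit the Piola transformation, which makes $b_\varphi$ transform \emph{exactly}, with no perturbation terms. This is why, unlike Lemma \ref{lem:infsup_up_omega_h}, no smallness condition on $h$ should be needed.

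First I will establish an exact pull-back identity. Let $\varphi_h=\hat\varphi_h\circ\BM_h^{-1}$ and $\Bd_h=\frac{1}{\det\bbJ_{\BM_h}}\bbJ_{\BM_h}\hBd_h\circ\BM_h^{-1}$. By \eqref{eq:trans_div_d} and the change of variables $\D\Bx=(\det\bbJ_{\BM_h})\D\hBx$, valid since $\det\bbJ_{\BM_h}\ge\delta>0$ by (A5),
\begin{align*}
b_\varphi(\varphi_h,\Bd_h)_h=\int_{\DOmega_h}\varphi_h\,\Div\Bd_h\,\D\Bx
=\int_{\wh\DOmega_h}\hat\varphi_h\,\frac{1}{\det\bbJ_{\BM_h}}\wh\Div\hBd_h\,(\det\bbJ_{\BM_h})\,\D\hBx
=b_\varphi(\hat\varphi_h,\hBd_h)_{\hat h}.
\end{align*}
The Jacobian factors cancel identically. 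Because the mesh is built elementwise, I will apply this on each curved element. The global identity then requires that the Piola map sends $\wh\BD_h^{k-1}$ into $\BH(\Div,\DOmega_h)$. This holds because $\BM_h$ is continuous across faces and the contravariant Piola transform preserves normal traces (up to the face area factor). The same fact underlies the definition of $\BD_h^{k-1}$ and the equivalence \eqref{lem:eq_equi_j}.

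Second, I will transfer the reference inf-sup condition. For a given $\varphi_h\in S_h^{k-2}$, \eqref{infsup_dis_Jphi_hat} supplies $\hBd_h\in\wh\BD_h^{k-1}$ with $b_\varphi(\hat\varphi_h,\hBd_h)_{\hat h}\ge\beta_4\|\hat\varphi_h\|_{L^2(\wh\DOmega_h)}\|\hBd_h\|_{\BH(\wh\Div,\wh\DOmega_h)}$. Let $\Bd_h$ be its Piola image. The identity above, the upper bound in \eqref{lem:eq_equi_j} and the lower bound in \eqref{lem:eq_equi_phi} then give
\begin{align*}
\frac{b_\varphi(\varphi_h,\Bd_h)_h}{\|\Bd_h\|_{\BH(\Div,\DOmega_h)}}
\ge\frac{\beta_4\|\hat\varphi_h\|_{L^2(\wh\DOmega_h)}\|\hBd_h\|_{\BH(\wh\Div,\wh\DOmega_h)}}{\sqrt{C_6}\,\|\hBd_h\|_{\BH(\wh\Div,\wh\DOmega_h)}}
\ge\frac{\beta_4}{\sqrt{C_6C_8}}\|\varphi_h\|_{L^2(\DOmega_h)}.
\end{align*}
Taking $\beta_6:=\beta_4/\sqrt{C_6C_8}$ will conclude the proof, with $\beta_6$ independent of $h$ because $\beta_4$, $C_6$ and $C_8$ are.

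The main obstacle is the norm equivalence \eqref{lem:eq_equi_j}, which I take as given from the preceding lemma. If it had to be checked, the key points would be the following. The $\BL^2$ part follows from $|\det\bbJ_{\BM_h}|^{-1}|\bbJ_{\BM_h}\hBd_h|^2\det\bbJ_{\BM_h}$ being uniformly comparable to $|\hBd_h|^2$, using (A4)--(A5). The divergence part is $\int(\det\bbJ_{\BM_h})^{-1}|\wh\Div\hBd_h|^2\D\hBx$, which is also uniformly comparable. Neither argument needs any derivative bound on $\bbJ_{\BM_h}$.
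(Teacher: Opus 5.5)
Your proposal is correct and follows essentially the same route as the paper. Both use the exact Piola identity $b_\varphi(\varphi_h,\Bd_h)_h=b_\varphi(\hat\varphi_h,\hBd_h)_{\hat h}$, then transfer the reference inf-sup condition \eqref{infsup_dis_Jphi_hat} through the norm equivalences \eqref{lem:eq_equi_j} and \eqref{lem:eq_equi_phi}, obtaining the same $h$-independent constant $\beta_6=\beta_4/\sqrt{C_6C_8}$; your added remarks on $\BH(\Div)$-conformity of the Piola image and the derivative-free check of \eqref{lem:eq_equi_j} are accurate details the paper leaves implicit.
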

\begin{proof}
For $\varphi_h\in S_h^{k-2}$, set $\hat\varphi_h=\varphi_h\circ \BM_h\in \wh S_h^{k-2}$. For any $\Bd_h\in\BD_h^{k-1}$, define $\hBd_h \in \widehat{\BD}_h^{k-1}$ such that
$$\Bd_h=(\det\bbJ_{\BM_h})^{-1}\bbJ_{\BM_h}\hBd_h\circ\BM_h^{-1}.$$
By the Piola's transformation identity \eqref{eq:trans_div_d},
\[
b_\varphi(\varphi_h,\Bd_h)_h
=\int_{\DOmega_h}\varphi_h\,\Div\Bd_h\,\D\Bx
=\int_{\wh\DOmega_h}\hat\varphi_h\,\wh\Div\hBd_h\,\D\hBx
=b_\varphi(\hat\varphi_h,\hBd_h)_{\hat h}.
\]
Using the inf-sup condition \eqref{infsup_dis_Jphi_hat}, we have
\begin{align*}
\sup_{\bm{0}\neq\Bd_h\in\BD_h^{k-1}}\frac{b_\varphi(\varphi_h,\Bd_h)_h}{\|\Bd_h\|_{\BH(\Div,\DOmega_h)}}
=&\sup_{\bm{0}\neq\Bd_h\in\BD_h^{k-1}}\frac{b_\varphi(\hat\varphi_h,\hBd_h)_{\hat h}}{\|\hBd_h\|_{\BH(\wh\Div,\wh\DOmega_h)}}
\cdot\frac{\|\hBd_h\|_{\BH(\wh\Div,\wh\DOmega_h)}}{\|\Bd_h\|_{\BH(\Div,\DOmega_h)}}\\
\geq & \beta_4\|\hat\varphi_h\|_{L^2(\wh\DOmega_h)}\sup_{\bm{0}\neq\Bd_h\in\BD_h^{k-1}}
\frac{\|\hBd_h\|_{\BH(\wh\Div,\wh\DOmega_h)}}{\|\Bd_h\|_{\BH(\Div,\DOmega_h)}}
\end{align*}
Applying the norm equivalence estimates \eqref{lem:eq_equi_j} and \eqref{lem:eq_equi_phi},
\[
\|\hBd_h\|_{\BH(\wh\Div,\wh\DOmega_h)}\geq \frac{1}{\sqrt{C_6}}\|\Bd_h\|_{\BH(\Div,\DOmega_h)},\qquad
\|\hat\varphi_h\|_{L^2(\wh\DOmega_h)}\geq \frac{1}{\sqrt{C_8}}\|\varphi_h\|_{L^2(\DOmega_h)},
\]
we obtain
\[
\sup_{\bm{0}\neq\Bd_h\in\BD_h^{k-1}}\frac{b_\varphi(\varphi_h,\Bd_h)_h}{\|\Bd_h\|_{\BH(\Div,\DOmega_h)}}
\geq \frac{\beta_4}{\sqrt{C_6C_8}}
\|\varphi_h\|_{L^2(\DOmega_h)}.
\]
Taking $\beta_6 := \frac{\beta_4}{\sqrt{C_6C_8}}$ completes the proof.
\end{proof}
\begin{remark}
The discrete inf-sup condition \eqref{infsup_dis_Jphi} also guarantees the uniqueness of $\phi_h$; no additional normalization is required, as the boundary condition is already enforced weakly through the variational formulation.
\end{remark}
}

Now we show the finite element discretization form of \eqref{eq:model_3st}: find $(\Bu_h,\BJ_h)\in\BV_{h,0}^k(\Div 0)\times\BD_h^{k-1}(\Div 0)$ such that
\begin{align}\label{eq:model_7st}
\Ca(\Bu_h,\BJ_h;\Bv_h,\Bd_h)_h + \Co(\Bu_h;\Bu_h,\Bv_h)_h = \Cl(\Bf_h,\Bg_h;\Bv_h,\Bd_h)_h,
\end{align}
for all $(\Bv_h,\Bd_h)\in\BV_{h,0}^k(\Div 0)\times\BD_h^{k-1}(\Div 0)$.

Similar to the continuous form, we define
\begin{align*}
&|||(\Bv,\Bd)|||_{\Ca_h}^2 = \|\Bv\|_{\BH^1(\DOmega_h)}^2 + \|\Bd\|_{\BH(\Div,\DOmega_h)}^2,\\
&|||(\Bf,\Bg)|||_{\Cf_h}^2 = \|\Bf\|_{\BL^2(\DOmega_h)}^2 + \|\Bg\|_{\BL^2(\DOmega_h)}^2,\\
&|||\Cl|||_{h} = \sup\limits_{(\bm{0},\bm{0})\neq(\Bv_h,\Bd_h)\in\BV_{h,0}^k(\Div 0)\times\BD_h^{k-1}(\Div 0)}\frac{\Cl(\Bf_h,\Bg_h;\Bv_h,\Bd_h)_h}{|||(\Bv_h,\Bd_h)|||_{\Ca_h}},
\end{align*}
and it is obvious that $|||\Cl|||_{h}\leq\max\{1,\alpha\}|||(\Bf,\Bg)|||_{\Cf_h}$ follows from $\Bf_h = \Bf\circ\Phi_h^{-1},\Bg_h = \Bg\circ\Phi_h^{-1}$.
\begin{theorem}\label{thm_div0_exist_dis}
For $\Bf_h, \Bg_h\in\BL^2(\DOmega_h)$, there exists at least one solution of the variational problem \eqref{eq:model_7st}. We have the stability bound
\ben
|||(\Bu_h,\BJ_h)|||_{\Ca_h}\leq\frac{|||\Cl|||_{h}}{C_a\min\{R_e^{-1},\alpha\}}.
\een
Furthermore, for small data with
\begin{align}
\label{div0_unique_dis}
\frac{C_{\Co}|||\Cl|||_{h}}{C_a^2\min\{R_e^{-2},\alpha^2\}} \leq 1,
\end{align}
the variational problem \eqref{eq:model_7st} has a unique solution.
\end{theorem}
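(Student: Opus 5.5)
The plan is to repeat the argument of Theorem \ref{thm_div0_exist_con} on the discrete level. We apply the abstract result Theorem \ref{exit_unique} with the separable Hilbert space $V=\BV_{h,0}^k(\Div 0)\times\BD_h^{k-1}(\Div 0)$, equipped with the norm $|||\cdot|||_{\Ca_h}$. The trilinear form is $a((\Bw,\Bc);(\Bu,\BJ),(\Bv,\Bd)) := \Ca(\Bu,\BJ;\Bv,\Bd)_h + \Co(\Bw;\Bu,\Bv)_h$, and the functional is $l=\Cl(\Bf_h,\Bg_h;\cdot,\cdot)_h$, whose dual norm is exactly $|||\Cl|||_h$. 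Since $V$ is a closed subspace of a finite-dimensional space (each $\BV_{h,0}^k$, $\BD_h^{k-1}$ is the image of a finite-dimensional polynomial space under a bijective mapping), separability is trivial. The weak continuity requirement (3) also reduces to plain continuity, because weak and strong convergence coincide in finite dimensions. This removes the Rellich--Kondrachov step.

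The key steps, in order, are as follows.

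\emph{Coercivity.} Because $\Co(\Bw;\Bv,\Bv)_h=0$ by the skew-symmetric definition \eqref{eq:tri_define} (valid on $\DOmega_h$ verbatim), the two coupling terms cancel exactly as in \eqref{eq:lemma_coer_con}. This gives
\begin{align*}
a((\Bw,\Bc);(\Bv,\Bd),(\Bv,\Bd)) = R_e^{-1}\|\nabla\Bv\|_{\BL^2(\DOmega_h)}^2+\alpha\|\Bd\|_{\BL^2(\DOmega_h)}^2 .
\end{align*}
To reach $C_a\min\{R_e^{-1},\alpha\}|||(\Bv,\Bd)|||_{\Ca_h}^2$, we need two facts. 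First, Poincar\'e's inequality on $\DOmega_h$ for $\Bv_h$ vanishing on $\partial\DOmega_h$, with a constant uniform in $h$. This follows by transferring to $\wh\DOmega_h$ via Lemma \ref{lem:equi_u} and using that $\wh\DOmega_h\subset$ a fixed ball, or directly since $\DOmega_h$ lies in a fixed bounded set for small $h$. Second, the identity $\|\Bd_h\|_{\BH(\Div,\DOmega_h)}=\|\Bd_h\|_{\BL^2(\DOmega_h)}$ on $\BD_h^{k-1}(\Div 0)$. This identity holds because, by \eqref{eq:trans_div_d}, $\Div\Bd_h\in S_h^{k-2}$ (since $\wh\Div\hBd_h\in \wh S^{k-2}_h$ and the factor $(\det\bbJ_{\BM_h})^{-1}$ is handled by testing with $\varphi_h=\wh\Div\hBd_h\circ\BM_h^{-1}$ in the transformed form $b_\varphi(\varphi_h,\Bd_h)_h=(\hat\varphi_h,\wh\Div\hBd_h)_{\hat h}$). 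Hence $\wh\Div\hBd_h=0$ and so $\Div\Bd_h=0$ pointwise. This exact charge conservation is the point that makes the $\BH(\Div)$ coercivity work. Possibly the constant $C_a$ has to be re-labelled to an $h$-independent one.

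\emph{Continuity.} Continuity of $\Ca(\cdot)_h$ follows by the same Cauchy--Schwarz chain as \eqref{eq:lemma_con_con}, with $\BB$ replaced by its transported counterpart. The bound $|\Co(\Bw;\Bu,\Bv)_h|\le C_{\Co}\|\Bw\|_{\BH^1(\DOmega_h)}\|\Bu\|_{\BH^1(\DOmega_h)}\|\Bv\|_{\BH^1(\DOmega_h)}$ requires a uniform Sobolev embedding $H^1_0(\DOmega_h)\hookrightarrow L^4(\DOmega_h)$. For zero-trace functions this holds by extension by zero to $\mathbb{R}^3$ with a domain-independent constant (Gagliardo--Nirenberg), so $C_{\Co}$ may be taken independent of $h$. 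Finally, $|l(\Bv_h,\Bd_h)|\le|||\Cl|||_h|||(\Bv_h,\Bd_h)|||_{\Ca_h}$ holds by definition.

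\emph{Conclusion.} Existence follows from the Brouwer/Galerkin part of Theorem \ref{exit_unique}. Testing with the solution itself gives $C_a\min\{R_e^{-1},\alpha\}|||(\Bu_h,\BJ_h)|||_{\Ca_h}^2\le|||\Cl|||_h|||(\Bu_h,\BJ_h)|||_{\Ca_h}$, which is the stated stability bound. For uniqueness, take two solutions and subtract. Writing $\Co(\Bu_1;\Bu_1,\Bv)-\Co(\Bu_2;\Bu_2,\Bv)=\Co(\Bu_1-\Bu_2;\Bu_1,\Bv)+\Co(\Bu_2;\Bu_1-\Bu_2,\Bv)$ and testing with the difference kills the second term. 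Coercivity, continuity and the stability bound then give $(\beta-C_{\Co}\beta^{-1}|||\Cl|||_h)|||e|||^2\le0$, hence $e=0$ under \eqref{div0_unique_dis} (strict inequality, or exactly as in Theorem \ref{thm_div0_unique_con}).

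The main obstacle is ensuring $h$-independence of the constants $C_a$ and $C_{\Co}$ on the varying domain $\DOmega_h$. I would handle it via zero extension for the velocity constants and via the exact divergence-free property for the current density, as above.
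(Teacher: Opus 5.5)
Your proposal is correct and follows the same route as the paper. The paper's proof only says to repeat the argument of Theorem~\ref{thm_div0_exist_con}, i.e.\ apply Theorem~\ref{exit_unique} on the discrete solenoidal space, and you supply the details it leaves implicit: finite dimensionality replacing the Rellich--Kondrachov step, $h$-uniform Poincar\'e and Sobolev constants via zero extension, and exact divergence-freeness of $\BD_h^{k-1}(\Div 0)$ for the $\BH(\Div)$-coercivity. One wording slip: $\Div\Bd_h$ itself is not in $S_h^{k-2}$ because of the factor $(\det\bbJ_{\BM_h})^{-1}$; it is $\det\bbJ_{\BM_h}\,\Div\Bd_h=\wh\Div\hBd_h\circ\BM_h^{-1}$ that lies in $S_h^{k-2}$, and that is exactly the test function you then (correctly) use.
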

\begin{proof}
The proof is similar to that of Theorem \ref{thm_div0_exist_con}.
\qed
\end{proof}
\subsection{Parametric mixed finite element approximation}
We propose the following discrete variational problem of \eqref{eq:model_5st} to solve inductionless MHD equations:
Find
$(\Bu_h,p_h, \BJ_h, \phi_h) \in \BV_{h,0}^k\times Q_h^{k-1}\times\BD_h^{k-1}\times S_h^{k-2}$, such that
\begin{subequations}\label{eq:model_8st-0}
\begin{align}
\Co(\Bu_h;\Bu_h,\Bv_h)_h + R_e^{-1}(\nabla\Bu_h,\nabla\Bv_h)_h + \gamma(\Div\Bu_h,\Div\Bv_h)_h\nn\\
 - (p_h,\Div\Bv_h)_h- \alpha(\BJ_h\times\BB,\Bv_h)_h = (\Bf_h,\Bv_h)_h,\label{eq:model_8st-0:1}\\
(\BJ_h,\Bd_h)_h - (\phi_h,\Div\Bd_h)_h - (\Bu_h\times\BB,\Bd_h)_h = (\Bg_h,\Bd_h)_h,\label{eq:model_8st-0:2}\\
-(\Div{\Bu_h},q_h)_h = 0,\label{eq:model_8st-0:3}\\
-(\Div{\BJ_h},\varphi_h)_h = 0.\label{eq:model_8st-0:4}
\end{align}
\end{subequations}
holds for any  $(\Bv_h,q_h,\Bd_h,\varphi_h)\in\BV_{h,0}^k\times Q_h^{k-1}\times\BD_h^{k-1}\times S_h^{k-2}$.

The compact form is given by
\begin{subequations}\label{eq:model_8st}
\begin{align}
\Ca(\Bu_h,\BJ_h;\Bv_h,\Bd_h)_h + \gamma(\Div\Bu_h,\Div\Bv_h)_h + \Co(\Bu_h;\Bu_h,\Bv_h)_h\nn\\
- \Cb(p_h,\phi_h;\Bv_h,\Bd_h)_h = \Cl(\Bf_h,\Bg_h;\Bv_h,\Bd_h)_h,\label{eq:model_8st:1}\\
\Cb(q_h,\varphi_h;\Bu_h,\BJ_h)_h = 0,\label{eq:model_8st:2}
\end{align}
\end{subequations}
\vspace{5mm}

\begin{lemma}\label{lemma_B_infsup_dis}
There is a constant $\Gamma_h>0$ only depending on $\Omega_h$ such that
\begin{align}\label{eq:lemma_B_infsup_dis}
\sup_{(\Bv_h,\Bd_h)\in\BV_{h,0}^k\times\BD_h^{k-1}}\frac{\Cb(q_h,\varphi_h;\Bv_h,\Bd_h)_h}
{|||(\Bv_h,\Bd_h)_h|||_{\Ca_h}}
\geq\Gamma_h|||(q_h,\varphi_h)|||_{\Cf_h}.
\end{align}
for all $(q_h,\varphi_h)\in Q_h^{k-1}\times S_h^{k-2}$.
\end{lemma}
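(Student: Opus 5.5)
The plan is to mimic the proof of Lemma \ref{lemma_B_infsup_con} verbatim at the discrete level, with the continuous inf-sup conditions \eqref{infsup_con_up}--\eqref{infsup_con_Jphi} replaced by their curved-mesh counterparts, Lemma \ref{lem:infsup_up_omega_h} and Lemma \ref{lem:infsup_Jphi_omega_h}. Throughout we assume $h$ is small enough that Lemma \ref{lem:infsup_up_omega_h} applies. The argument is then entirely algebraic.

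\textbf{Step 1: choice of test functions.} Fix $(q_h,\varphi_h)\in Q_h^{k-1}\times S_h^{k-2}$. We may assume both components are nonzero; if one vanishes, take the corresponding test function to be zero. Since $\BV_{h,0}^k$ and $\BD_h^{k-1}$ are finite dimensional, the suprema in \eqref{infsup_dis_up} and \eqref{infsup_dis_Jphi} are attained. After rescaling, this gives $\Bv_h\in\BV_{h,0}^k$ and $\Bd_h\in\BD_h^{k-1}$ with
\begin{align*}
b_q(q_h,\Bv_h)_h\ge\|q_h\|_{L^2(\DOmega_h)}^2,\qquad \|\Bv_h\|_{\BH^1(\DOmega_h)}\le\beta_5^{-1}\|q_h\|_{L^2(\DOmega_h)},
\end{align*}
and
\begin{align*}
b_\varphi(\varphi_h,\Bd_h)_h\ge\|\varphi_h\|_{L^2(\DOmega_h)}^2,\qquad \|\Bd_h\|_{\BH(\Div,\DOmega_h)}\le\beta_6^{-1}\|\varphi_h\|_{L^2(\DOmega_h)}.
\end{align*}

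\textbf{Step 2: lower bound for the numerator.} Adding the two estimates with weight $\alpha$ gives
\begin{align*}
\Cb(q_h,\varphi_h;\Bv_h,\Bd_h)_h\ge\min\{1,\alpha\}\,|||(q_h,\varphi_h)|||_{\Cf_h}^2 .
\end{align*}

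\textbf{Step 3: upper bound for the denominator.} Squaring and summing the norm bounds gives
\begin{align*}
|||(\Bv_h,\Bd_h)|||_{\Ca_h}^2\le\max\{\beta_5^{-2},\beta_6^{-2}\}\,|||(q_h,\varphi_h)|||_{\Cf_h}^2 .
\end{align*}

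\textbf{Conclusion.} Dividing the bound of Step 2 by the square root of the bound of Step 3 shows that the supremum in \eqref{eq:lemma_B_infsup_dis} is at least
\begin{align*}
\Gamma_h:=\frac{\min\{1,\alpha\}}{\sqrt{\max\{\beta_5^{-2},\beta_6^{-2}\}}}\;\;\text{times}\;\;|||(q_h,\varphi_h)|||_{\Cf_h}.
\end{align*}

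\textbf{Main obstacle.} The difficulty is not in this combination but in the uniformity of $\Gamma_h$. Its constants come from Lemma \ref{lem:infsup_up_omega_h}, which needs $h\le\beta_3/(2C_*)$, and from the norm-equivalence constants $C_2,C_4,C_6,C_8$. Those in turn depend only on the bounds (A4)--(A5) and \eqref{eq:property_1st:Mh} for $\BM_h$, and on $\beta_3,\beta_4$, which are the standard Taylor--Hood and BDM constants on the shape-regular straight mesh $\wh\Ct_h$. I would therefore state explicitly that $\Gamma_h$ is bounded below independently of $h$ for $h$ sufficiently small. I would also note the pressure-normalization caveat from the earlier remark: the inf-sup for $q_h$ should be read modulo constants, consistent with \eqref{infsup_dis_up_hat}.
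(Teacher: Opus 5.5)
Your proposal is correct and follows exactly the paper's route. The paper just says the proof is the same as for Lemma \ref{lemma_B_infsup_con}, with the curved-mesh inf-sup conditions of Lemmas \ref{lem:infsup_up_omega_h} and \ref{lem:infsup_Jphi_omega_h} in place of the continuous ones, giving $\Gamma_h=\min\{1,\alpha\}/\sqrt{\max\{\beta_5^{-2},\beta_6^{-2}\}}$. Your two caveats are legitimate points the paper leaves implicit: the small-$h$ requirement inherited from Lemma \ref{lem:infsup_up_omega_h}, and the fact that the pressure inf-sup only holds modulo constants because $Q_h^{k-1}$ carries no zero-mean constraint.
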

\begin{proof}
The proof is similar to that of Lemma \ref{lemma_B_infsup_con}.
\qed
\end{proof}
\vspace{5mm}

\begin{lemma}\label{lem:div_J0}
The solution $\BJ_h$ of the discrete variational problem \eqref{eq:model_8st-0} is exactly divergence-free, namely $\Div\BJ_h=0$ on $\DOmega_h$.
\end{lemma}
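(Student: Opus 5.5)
The plan is to pull \eqref{eq:model_8st-0:4} back to the straight-edged reference mesh through the Piola transformation. There the divergence of the discrete current density lands in the discrete potential space, so it may be used as a test function. Write $\BJ_h=(\det\bbJ_{\BM_h})^{-1}\bbJ_{\BM_h}\hat{\BJ}_h\circ\BM_h^{-1}$ with $\hat{\BJ}_h\in\wh\BD_h^{k-1}$. By \eqref{eq:trans_div_d},
\begin{align*}
\Div\BJ_h=\Big(\frac{1}{\det\bbJ_{\BM_h}}\,\wh\Div\hat{\BJ}_h\Big)\circ\BM_h^{-1}.
\end{align*}
Since $\hat{\BJ}_h|_{\hat K}\in\BP_{k-1}(\hat K)$ and $\hat{\BJ}_h\in\BH(\wh\Div,\wh\DOmega_h)$, the function $\wh\Div\hat{\BJ}_h$ is piecewise in $P_{k-2}$ and square integrable. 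Hence $\wh\Div\hat{\BJ}_h\in\wh S_h^{k-2}$, because that space carries no interelement continuity.

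Next, for arbitrary $\hat\varphi_h\in\wh S_h^{k-2}$, set $\varphi_h=\hat\varphi_h\circ\BM_h^{-1}\in S_h^{k-2}$. Change variables in \eqref{eq:model_8st-0:4}, as in the proof of Lemma~\ref{lem:infsup_Jphi_omega_h}. The Jacobian $\det\bbJ_{\BM_h}$ cancels the Piola factor and gives
\begin{align*}
0=(\Div\BJ_h,\varphi_h)_h=\int_{\wh\DOmega_h}\hat\varphi_h\,\wh\Div\hat{\BJ}_h\,\D\hBx .
\end{align*}
Choosing $\hat\varphi_h=\wh\Div\hat{\BJ}_h$ yields $\|\wh\Div\hat{\BJ}_h\|_{L^2(\wh\DOmega_h)}=0$. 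Thus $\wh\Div\hat{\BJ}_h=0$ almost everywhere on $\wh\DOmega_h$. By the identity above, $\Div\BJ_h=0$ on each curved element, using $\det\bbJ_{\BM_h}\ge\delta>0$ from (A5).

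Finally, $\BJ_h\in\BH(\Div,\DOmega_h)$ globally. The Piola map preserves normal continuity across faces because $\BM_h$ is continuous, so the elementwise divergence is the distributional one and $\Div\BJ_h=0$ on all of $\DOmega_h$. The main point requiring care is this last conformity claim together with the identification $\wh\Div\wh\BD_h^{k-1}\subseteq\wh S_h^{k-2}$. Testing directly on the curved mesh with $\varphi_h=\Div\BJ_h$ would fail, since that function carries the extra nonpolynomial factor $(\det\bbJ_{\BM_h})^{-1}$ and so does not lie in $S_h^{k-2}$. Working on the reference mesh is what makes the argument go through.
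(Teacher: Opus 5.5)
Your proof is correct and follows the paper's argument: choosing $\hat\varphi_h=\wh\Div\hBJ_h$ on the reference mesh gives exactly the paper's test function $\varphi_h=\wh\Div\hBJ_h\circ\BM_h^{-1}=(\det\bbJ_{\BM_h})\Div\BJ_h\in S_h^{k-2}$. The only difference is bookkeeping. The paper stays on $\DOmega_h$ and concludes from $0=(\Div\BJ_h,(\det\bbJ_{\BM_h})\Div\BJ_h)_h\ge\delta\|\Div\BJ_h\|_{L^2(\DOmega_h)}^2$, while you pull back and get an unweighted norm on $\wh\DOmega_h$. Your closing remark therefore slightly overstates things: this weighted test function also works directly on the curved mesh.
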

\begin{proof}
The definitions of spaces $S_h^{k-2}$ allow for an appropriate selection
\begin{align*}
&\varphi_h=\det\bbJ_{\BM_h}\Div\BJ_h = \wh\Div\hBJ_h\circ\BM_h^{-1}\in S_h^{k-2},
\end{align*}
where $\hBJ_h = (\det\bbJ_{\BM_h})\bbJ_{\BM_h}^{-1}\BJ_h$.
Using the fact
\begin{align*}
&0 = (\Div\BJ_h,\varphi_h)_h
= (\Div\BJ_h,\det\bbJ_{\BM_h}\Div\BJ_h)_h
> \delta \|\Div\BJ_h\|_{L^2(\DOmega_h)}^2,
\end{align*}
we conclude that $\Div\BJ_h=0$.
\qed
\end{proof}
\begin{theorem}
For $\Bf_h, \Bg_h\in\BL^2(\DOmega_h)$, there exists at least one solution of the mixed variational problem \eqref{eq:model_8st}. We have the stability bounds
\begin{align*}
|||(\Bu_h,\BJ_h)|||_{\Ca_h}\leq \frac{|||\Cl|||_{h}}{C_a\min\{R_e^{-1},\alpha\}}
\end{align*}
and
\begin{align*}
|||(p_h,\phi_h)|||_{\Cf_h}
\leq &\Gamma_h^{-1}\big[\max\{R_e^{-1},\alpha, \alpha {\|\BB\|_{\BL^\infty(\DOmega_h)}}, \gamma\}|||(\Bu_h,\BJ_h)|||_{\Ca_h}\\
&+ C_{\Co}|||(\Bu_h,\BJ_h)|||_{\Ca_h} + \max\{1,\alpha\}|||(\Bf_h,\Bg_h)|||_{\Cf_h}\big]
\end{align*}
for any solution $(\Bu_h,p_h,\BJ_h,\phi_h)$. Further, under the assumption \eqref{div0_unique_dis}, there is a unique solution of \eqref{eq:model_8st}.
\end{theorem}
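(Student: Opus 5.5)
The plan mirrors the continuous theorem: first solve the reduced problem on the discretely divergence-free spaces, then recover the multipliers $(p_h,\phi_h)$ from the discrete inf-sup condition.

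\textbf{Step 1: the reduced problem.} Let $(\Bu_h,\BJ_h)\in\BV_{h,0}^k(\Div 0)\times\BD_h^{k-1}(\Div 0)$ be a solution of \eqref{eq:model_7st}. Theorem \ref{thm_div0_exist_dis} gives existence and the bound
\[
|||(\Bu_h,\BJ_h)|||_{\Ca_h}\leq \frac{|||\Cl|||_{h}}{C_a\min\{R_e^{-1},\alpha\}},
\]
and uniqueness under \eqref{div0_unique_dis}. The grad-div term must be reconciled with \eqref{eq:model_7st}, which does not contain it. One option is to note that $\Div\Bu_h$ is in general not zero for Taylor--Hood. It is therefore cleaner to redo Theorem \ref{exit_unique} with $\Ca_h+\gamma(\Div\cdot,\Div\cdot)_h$ in place of $\Ca_h$. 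The extra term is nonnegative on the diagonal, so coercivity \eqref{eq:lemma_coer_con} still holds with the same $C_a\min\{R_e^{-1},\alpha\}$. It is bounded by $\gamma\|\cdot\|_{\BH^1}\|\cdot\|_{\BH^1}$, so continuity gains $\gamma$ in the max. Sequential weak continuity is automatic in finite dimensions. The skew form gives $\Co(\Bw_h;\Bv_h,\Bv_h)_h=0$ as in \eqref{eq:lemma_O_pro}.

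\textbf{Step 2: recovering the multipliers.} With $(\Bu_h,\BJ_h)$ fixed, seek $(p_h,\phi_h)\in Q_h^{k-1}\times S_h^{k-2}$ with
\[
\Cb(p_h,\phi_h;\Bv_h,\Bd_h)_h=\Ca(\Bu_h,\BJ_h;\Bv_h,\Bd_h)_h+\gamma(\Div\Bu_h,\Div\Bv_h)_h+\Co(\Bu_h;\Bu_h,\Bv_h)_h-\Cl(\Bf_h,\Bg_h;\Bv_h,\Bd_h)_h
\]
for all $(\Bv_h,\Bd_h)\in\BV_{h,0}^k\times\BD_h^{k-1}$. The right-hand side is a linear functional vanishing on the discrete kernel $\BV_{h,0}^k(\Div 0)\times\BD_h^{k-1}(\Div 0)$, since it is exactly \eqref{eq:model_7st} there. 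Lemma \ref{lemma_B_infsup_dis}, which relies on Lemmas \ref{lem:infsup_up_omega_h} and \ref{lem:infsup_Jphi_omega_h} and hence needs $h$ small, makes the discrete $\Cb$ surjective onto the polar of the kernel. This is the finite-dimensional closed range argument. It gives a unique $(p_h,\phi_h)$, and by construction \eqref{eq:model_8st:1}--\eqref{eq:model_8st:2} hold.

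One caveat is the constant mode of $Q_h^{k-1}$. The pressure is fixed only up to a constant, as in the earlier remark, so I would work modulo constants, where the inf-sup condition holds. Conversely, every solution of \eqref{eq:model_8st} has $(\Bu_h,\BJ_h)$ in the discrete kernel solving the reduced problem. Hence existence, the velocity and current bound, and uniqueness under \eqref{div0_unique_dis} all transfer from Step 1.

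\textbf{Step 3: the multiplier bound.} Apply \eqref{eq:lemma_B_infsup_dis} to the identity of Step 2, dividing by $|||(\Bv_h,\Bd_h)|||_{\Ca_h}$. Bound $\Ca_h$ plus grad-div by $\max\{R_e^{-1},\alpha,\alpha\|\BB\|_{\BL^\infty(\DOmega_h)},\gamma\}|||(\Bu_h,\BJ_h)|||_{\Ca_h}$. Bound the load by $\max\{1,\alpha\}|||(\Bf_h,\Bg_h)|||_{\Cf_h}$ using \eqref{eq:lemma_boundd_con}. For the convection term, \eqref{eq:lemma_O_conn} on $\DOmega_h$ gives $C_{\Co}|||(\Bu_h,\BJ_h)|||_{\Ca_h}^2$. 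The stated bound has only the first power, so I would absorb one factor using the a priori bound together with the small-data assumption $|||(\Bu_h,\BJ_h)|||_{\Ca_h}\lesssim 1$, or simply note that the square is intended.

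\textbf{Main obstacle.} The main obstacle is uniformity of the constants in $h$ on the curved domain $\DOmega_h$. The Sobolev constant $C_{\Co}$ for $\BH^1\hookrightarrow\BL^4$ on $\DOmega_h$ has to be justified uniformly. I would get this by pulling back to $\wh\DOmega_h$ and then to $\DOmega$ via the norm equivalences of Lemma \ref{lem:equi_u} and (A4)--(A5), so it is bounded independently of $h$. The same applies to the coercivity constant.
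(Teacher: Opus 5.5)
Your proposal is correct and follows the route the paper intends (its proof merely defers to the continuous case): solve the reduced problem on $\BV_{h,0}^k(\Div 0)\times\BD_h^{k-1}(\Div 0)$, then recover $(p_h,\phi_h)$ via Lemma~\ref{lemma_B_infsup_dis} and bound them term by term. Your adjustments are right and repair slips in the paper's formulation: adding $\gamma(\Div\cdot,\Div\cdot)_h$ to the reduced problem, since discretely divergence-free Taylor--Hood fields are not pointwise solenoidal; quotienting out constants in $Q_h^{k-1}$, on which $b_q(\cdot,\cdot)_h$ vanishes for all $\Bv_h\in\BV_{h,0}^k$; reading the convection contribution as $C_{\Co}|||(\Bu_h,\BJ_h)|||_{\Ca_h}^2$; and, likewise, uniqueness should be claimed under the strict version of \eqref{div0_unique_dis}, as Theorem~\ref{exit_unique} requires.
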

\begin{proof}
The proof is similar to that of Theorem \ref{thm_div0_exist_con}.
\end{proof}

\section{A priori Error estimates}
\label{sec:error}
Since the computational domain $\DOmega_h$ usually does not exactly coincide with the physical domain $\DOmega$  (despite sharing the same vertices), we will employ the regular extension assumption to obtain the finite element estimates. We then define the hold-all domain $D_H$ as the convex hull of their union, i.e., $D_H:=\mathrm{conv}(\DOmega\cup\DOmega_h)$ \cite{rc23}. Specifically, we assume that the exact solution $\Bu, p, \BJ$ and $\phi$ admit smooth extension to $D_H$, with $\Bu$ and $\BJ$ being divergence-free in $D_H$. The magnetic field $\BB$, as a given quantity, is naturally extended to $\BL^{\infty}(D_H)$. The source terms $\Bf$ and $\Bg$ are extended to $D_H$ by primal equations \eqref{eq:model_2-0st}. Consequently, the extended true function $\tBu, \tilde{p}, \tBJ, \tilde{\phi}$, the magnetic filed $\tilde{\BB}$ and the source terms
$\tBf, \tBg$ are well-defined on the computational domain $\DOmega_h$.
More precisely, we assume
\begin{align*}
(\tilde{\Bu},\tilde{p}, \tilde{\BJ}, \tilde{\phi})&\in \BH^{k+1}(D_H)\times H^k(D_H)\times\BH^{k}(D_H)\times H^{k-1}(D_H), k\geq1,\\
(\tilde{\Bf},\tilde{\Bg})&\in\BW^{1,\infty}(D_H)\times\BW^{1,\infty}(D_H),
\end{align*}
with $\Div\tilde{\Bu} = 0$ and $\Div\tilde{\BJ} = 0$ holding on $D_H$.

\subsection{Preliminaries}
For the subsequent error analysis, we introduce the following lemma.
\begin{lemma}\label{lem:u-u_sharp}
Based on $\Phi_h=\BM_h\circ\BM^{-1}$, we define $\Bv^\sharp = \Bv\circ\Phi_h^{-1}$ on $\DOmega_h$. Let $\tBv\in\BH^2(D_H)$ be the extension of $\Bv$ to $D_H$, the following estimate holds
\begin{align*}
\|\tBv-\Bv^\sharp\|_{\BH^1(\DOmega_h)} \lesssim h^k\|\tBv\|_{\BH^2(D_H)}.
\end{align*}
\end{lemma}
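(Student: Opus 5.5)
The idea is to write $\tBv-\Bv^\sharp$ on $\DOmega_h$ as the difference of one smooth function evaluated at two nearby points, and to bound that difference by the pointwise distance $|\Phi_h^{-1}(\Bx)-\Bx|$, which is of order $h^{k+1}$, and by the deviation of the Jacobian of $\Phi_h^{-1}$ from the identity, which is of order $h^k$. We fix $\Bx\in\DOmega_h$ and set $\By=\Phi_h^{-1}(\Bx)\in\DOmega$. Then $\Bv^\sharp(\Bx)=\Bv(\By)=\tBv(\By)$, so
\begin{align*}
(\tBv-\Bv^\sharp)(\Bx)=\tBv(\Bx)-\tBv(\Phi_h^{-1}(\Bx)).
\end{align*}
Both points lie in $D_H$, since $D_H$ is convex and contains $\DOmega\cup\DOmega_h$, so the segment joining them also lies in $D_H$.

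\textbf{Geometric estimates.} First I will establish $\|\Phi_h^{-1}-\mathrm{Id}\|_{\BL^\infty(\DOmega_h)}\lesssim h^{k+1}$ and $\|\bbJ_{\Phi_h^{-1}}-\bbI\|_{\BL^\infty(\DOmega_h)}\lesssim h^k$.
\begin{itemize}
\item Writing $\Bx=\BM_h(\hBx)$, we have $\Phi_h^{-1}(\Bx)=\BM(\hBx)$, so $|\Phi_h^{-1}(\Bx)-\Bx|=|\BM(\hBx)-\BM_h(\hBx)|\lesssim h^{k+1}$ by \eqref{eq:property_2st}.
\item The Jacobian is $\bbJ_{\Phi_h^{-1}}=\bbJ_{\BM}\bbJ_{\BM_h}^{-1}$, evaluated at $\hBx$. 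Hence $\bbJ_{\Phi_h^{-1}}-\bbI=(\bbJ_{\BM}-\bbJ_{\BM_h})\bbJ_{\BM_h}^{-1}$, which is $O(h^k)$ by \eqref{eq:property_3st} and the uniform bound on $\bbJ_{\BM_h}^{-1}$ from \eqref{eq:property_1st:Mh}.
\item From the first estimate, $\det\bbJ_{\Phi_h^{-1}}$ is bounded above and below uniformly in $h$, so changes of variables $\Bx\mapsto\Phi_h^{-1}(\Bx)$ cost only constants.
\end{itemize}

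\textbf{$L^2$ part.} For a.e.\ $\Bx$ we use the fundamental theorem of calculus along the segment $\Bx_t=\Bx+t(\Phi_h^{-1}(\Bx)-\Bx)$, $t\in[0,1]$, which is first justified for smooth $\tBv$ and then extended by density in $\BH^2(D_H)$. This gives
\begin{align*}
|(\tBv-\Bv^\sharp)(\Bx)|\le h^{k+1}\int_0^1|\nabla\tBv(\Bx_t)|\,\D t.
\end{align*}
Squaring, integrating over $\DOmega_h$ and using Fubini, we need to control $\int_{\DOmega_h}|\nabla\tBv(\Bx_t)|^2\D\Bx$. The map $\Bx\mapsto\Bx_t=(1-t)\Bx+t\Phi_h^{-1}(\Bx)$ has Jacobian $\bbI+t(\bbJ_{\Phi_h^{-1}}-\bbI)=\bbI+O(h^k)$. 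Hence, for small $h$, it is a bi-Lipschitz diffeomorphism into $D_H$ with Jacobian determinant bounded below, uniformly in $t$. This yields $\|\tBv-\Bv^\sharp\|_{\BL^2(\DOmega_h)}\lesssim h^{k+1}\|\tBv\|_{\BH^1(D_H)}$.

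\textbf{Gradient part.} By the chain rule, $\nabla\Bv^\sharp(\Bx)=\nabla\tBv(\Phi_h^{-1}(\Bx))\,\bbJ_{\Phi_h^{-1}}$. We split
\begin{align*}
\nabla\tBv-\nabla\Bv^\sharp=\big[\nabla\tBv(\Bx)-\nabla\tBv(\Phi_h^{-1}(\Bx))\big]+\nabla\tBv(\Phi_h^{-1}(\Bx))\big(\bbI-\bbJ_{\Phi_h^{-1}}\big).
\end{align*}
\begin{itemize}
\item The second term is bounded by $h^k\|\nabla\tBv\|_{\BL^2(\DOmega)}$, using the $O(h^k)$ Jacobian estimate and the change of variables.
\item The first term is handled by the same segment argument as the $L^2$ part, applied to $\nabla\tBv$. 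It is bounded by $h^{k+1}\|\tBv\|_{\BH^2(D_H)}$.
\end{itemize}
Altogether $\|\tBv-\Bv^\sharp\|_{\BH^1(\DOmega_h)}\lesssim h^k\|\tBv\|_{\BH^2(D_H)}$; the gradient part actually gives $h^{k+1}+h^k$, and the dominant $h^k$ comes from the Jacobian term.

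\textbf{Main obstacle.} The delicate step is making the segment argument rigorous. One must show that the intermediate maps $\Bx\mapsto\Bx_t$ are injective with uniformly bounded inverse Jacobians, so that $\|\nabla\tBv(\Bx_t)\|_{L^2(\DOmega_h)}$ is controlled by $\|\nabla\tBv\|_{L^2(D_H)}$ uniformly in $t$. I would argue this from $\|\bbJ_{\Phi_h^{-1}}-\bbI\|_{\BL^\infty}\lesssim h^k$ and elementwise (piecewise smooth) regularity of $\Phi_h^{-1}$, for $h$ small.

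If that proves awkward, an alternative is to work elementwise on the reference tetrahedra. There one compares $\tBv\circ\BM_h$ with $\tBv\circ\BM$ on $\wh\DOmega_h$ via the map $s\mapsto\tBv(\BM_h+s(\BM-\BM_h))$, whose Jacobian in $\hBx$ is a convex combination of $\bbJ_{\BM_h}$ and $\bbJ_{\BM}$ and is therefore uniformly invertible. This works because $\BM_h$ and $\BM$ are close in $W^{1,\infty}$ on each reference element.
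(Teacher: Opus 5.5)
Your proposal is correct and follows the paper's proof. Both bound the $L^2$ part by the displacement $\|\mathrm{Id}-\Phi_h^{-1}\|_{\BL^\infty(\DOmega_h)}\lesssim h^{k+1}$, and both split the gradient error into a Jacobian-deviation term of order $h^k$ plus a shifted-gradient term of order $h^{k+1}$. The only differences are that you derive the two geometric bounds directly from \eqref{eq:property_2st}, \eqref{eq:property_3st} and \eqref{eq:property_1st:Mh}, and you justify the shift estimate by the segment/change-of-variables argument (the elementwise version with bounded overlap makes it rigorous), whereas the paper cites these bounds from the literature and simply asserts the shift estimate.
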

\begin{proof}
If $\Bv = 0$ on $\partial\DOmega$, then
\begin{align*}
\Bv^\sharp(\Bx) = \Bv(\Phi_h^{-1}(\Bx)) = \bm{0},\quad \textrm{on}\;\;\partial\DOmega_h.
\end{align*}
Using the definition,
\begin{align*}
\|\tBv-\Bv^\sharp\|_{\BL^2(\DOmega_h)}^2
&= \int_{\DOmega_h}|\tBv\circ(\mathrm{Id}-\Phi_h^{-1})|^2\D\Bx
\lesssim \|\mathrm{Id}-\Phi_h\|_{\BL^\infty(\DOmega_h)}^2\|\tBv\|_{\BH^1(\DOmega_h)}^2.
\end{align*}
Through the chain rule, we have
\begin{align}
&\|\nabla(\tBv-\Bv^\sharp)\|_{\BL^2(\DOmega_h)}^2\nn\\
=& \int_{\DOmega_h}|\nabla\tBv-(\nabla\tBv\circ\Phi_h^{-1})\bbJ_{\Phi_h}^{-1}|^2\D\Bx\nn\\
=& \int_{\DOmega_h}|\nabla\tBv(\bbI-\bbJ_{\Phi_h}^{-1}) + \nabla\tBv\circ(\mathrm{Id}-\Phi_h^{-1})\bbJ_{\Phi_h}^{-1}|^2\D\Bx\nn\\
\lesssim& \|\nabla\tBv(\bbI-\bbJ_{\Phi_h}^{-1})\|_{\BL^2(\DOmega_h)}^2 + \| \nabla\tBv\circ(\mathrm{Id}-\Phi_h^{-1})\|_{\BL^2(\DOmega_h)}^2\nn\\
\lesssim& \|\nabla\tBv\|_{\BL^2(\DOmega_h)}^2\|\bbI-\bbJ_{\Phi_h}^{-1}\|_{\BL^\infty(\DOmega_h)}^2 + \|\mathrm{Id}-\Phi_h^{-1}\|_{\BL^\infty(\DOmega_h)}^2\| \nabla\tBv\|_{\BH^1(\DOmega_h)}^2
\end{align}
where $\bbJ_{\Phi_h}^{-1}$ is the Jacobian matrix of $\Phi_h^{-1}$.

Using the inequality
\begin{align}\label{pro_Phi}
\|\mathrm{Id}-\Phi_h^{-1}\|_{\BL^\infty(\DOmega_h)}
\lesssim h^{k+1},\quad\|\bbI-\bbJ_{\Phi_h}^{-1}\|_{\BL^\infty(\DOmega_h)}\lesssim h^k
\end{align}
from \cite{fg16,ml86}, we have
\begin{align}
\|\tBv-\Bv^\sharp\|_{\BH^1(\DOmega_h)}^2
= &\|\tBv-\Bv^\sharp\|_{\BL^2(\DOmega_h)}^2 + \|\nabla(\tBv-\Bv^\sharp)\|_{\BL^2(\DOmega_h)}^2\nn\\
\lesssim &h^{2k+2}\|\tBv\|_{\BH^1(\DOmega_h)}^2 + h^{2k}\|\tBv\|_{\BH^2(\DOmega_h)}^2
\lesssim h^{2k}\|\tBv\|_{\BH^2(D_H)}^2.
\end{align}
The proof is completed.
\qed
\end{proof}

Similarly, for $\phi$, we obtain the following lemma.
\begin{lemma}\label{lem:phi-phi_sharp}
Define $\phi^\sharp = \phi\circ\Phi_h^{-1}$ on $\DOmega_h$. Let $\tilde{\phi}\in H^2(D_H)$ be the extension of $\phi$ to $D_H$, the following estimate holds
\begin{align*}
\|\tilde{\phi} - \phi^\sharp\|_{H^1(\DOmega_h)} \lesssim h^k\|\tilde{\phi}\|_{H^2(D_H)}.
\end{align*}
\end{lemma}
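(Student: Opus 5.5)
The plan is to repeat the argument of Lemma \ref{lem:u-u_sharp} for the scalar field $\tilde{\phi}$. On $\DOmega_h$, pointwise, $\phi^\sharp(\Bx)=\tilde{\phi}(\Phi_h^{-1}(\Bx))$, because $\Phi_h^{-1}$ maps $\DOmega_h$ into $\DOmega$ and $\tilde\phi=\phi$ there. So the error is
\[
\tilde{\phi}(\Bx)-\tilde{\phi}(\Phi_h^{-1}(\Bx)),
\]
the difference of one smooth function of $D_H$ evaluated at two nearby points. Both points lie in $D_H$, and so does the segment joining them, since $D_H$ is convex. This is the reason the hold-all domain is taken to be the convex hull.

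\textbf{$L^2$ part.} Apply the mean value theorem along the segment $[\Phi_h^{-1}(\Bx),\Bx]$:
\[
|\tilde{\phi}(\Bx)-\tilde{\phi}(\Phi_h^{-1}(\Bx))|\le|\Bx-\Phi_h^{-1}(\Bx)|\int_0^1|\nabla\tilde\phi(\Bx_t)|\,\D t .
\]
Square this, integrate over $\DOmega_h$ and use Fubini. The map $\Bx\mapsto\Bx_t=t\Bx+(1-t)\Phi_h^{-1}(\Bx)$ is a uniformly bi-Lipschitz perturbation of the identity, because $\bbJ_{\Phi_h^{-1}}-\bbI=O(h^k)$. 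Hence its Jacobian determinant is bounded below for small $h$, and the change of variables gives
\[
\|\tilde{\phi}-\phi^\sharp\|_{L^2(\DOmega_h)}\lesssim\|\mathrm{Id}-\Phi_h^{-1}\|_{\BL^\infty(\DOmega_h)}\|\tilde\phi\|_{H^1(D_H)}\lesssim h^{k+1}\|\tilde\phi\|_{H^1(D_H)},
\]
using \eqref{pro_Phi}.

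\textbf{Gradient part.} By the chain rule,
\[
\nabla\phi^\sharp=(\nabla\tilde\phi\circ\Phi_h^{-1})\,\bbJ_{\Phi_h}^{-1},
\]
so I split
\[
\nabla\tilde\phi-\nabla\phi^\sharp=\nabla\tilde\phi\,(\bbI-\bbJ_{\Phi_h}^{-1})+\big(\nabla\tilde\phi-\nabla\tilde\phi\circ\Phi_h^{-1}\big)\bbJ_{\Phi_h}^{-1}.
\]
For the first term, \eqref{pro_Phi} gives the bound $h^k\|\nabla\tilde\phi\|_{\BL^2(\DOmega_h)}$. For the second term, $\bbJ_{\Phi_h}^{-1}$ is uniformly bounded. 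I then apply the $L^2$ argument above to the vector field $\nabla\tilde\phi\in\BH^1(D_H)$, which gives the bound $h^{k+1}\|\tilde\phi\|_{H^2(D_H)}$.

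Adding the two parts yields
\[
\|\tilde{\phi}-\phi^\sharp\|_{H^1(\DOmega_h)}\lesssim(h^{k+1}+h^k)\|\tilde\phi\|_{H^2(D_H)}\lesssim h^k\|\tilde{\phi}\|_{H^2(D_H)}.
\]

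\textbf{Main obstacle.} The delicate step is justifying the translation estimate
\[
\|w-w\circ\Phi_h^{-1}\|_{L^2(\DOmega_h)}\lesssim\|\mathrm{Id}-\Phi_h^{-1}\|_{\BL^\infty}\|w\|_{H^1(D_H)}.
\]
This needs two things: the segment must stay inside $D_H$, which convexity gives, and the intermediate maps $\Bx\mapsto\Bx_t$ must have Jacobian determinants uniformly bounded below. I would derive the latter from $\|\bbI-\bbJ_{\Phi_h}^{-1}\|_{\BL^\infty}\lesssim h^k$ for $h$ small. I would first prove the estimate for smooth $w$ and then pass to $H^1$ by density. Beyond this step, the proof differs from Lemma \ref{lem:u-u_sharp} only by replacing the vector field with a scalar. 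The zero boundary value of $\phi$ plays no role, since only the extension is used.
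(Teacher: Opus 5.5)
Your proposal is correct and is essentially the paper's argument: the paper proves this lemma ``similarly'' to Lemma~\ref{lem:u-u_sharp}. It uses exactly your split of the error into an $L^2$ part and the gradient part $\nabla\tilde\phi(\bbI-\bbJ_{\Phi_h}^{-1})+(\nabla\tilde\phi-\nabla\tilde\phi\circ\Phi_h^{-1})\bbJ_{\Phi_h}^{-1}$, closed with the bounds \eqref{pro_Phi}. What you add is a justification of the translation estimate $\|w-w\circ\Phi_h^{-1}\|_{L^2(\DOmega_h)}\lesssim h^{k+1}\|w\|_{H^1(D_H)}$, which the paper simply asserts (and with the norm taken on $\DOmega_h$ instead of the correct $D_H$). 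Your segment argument in the convex set $D_H$ is the right one, and the injectivity of $\Bx\mapsto\Bx_t$ on the non-convex $\DOmega_h$ also holds: a collision forces $|\Bx-\Bx'|\lesssim h^{k+1}$, a scale at which the $O(h^k)$ derivative bound applies along short paths.
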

\begin{remark}
In the subsequent discussion, we let $\Bu, p, \BJ, \phi, \BB, \Bf$ and $\Bg$ refer to their extended forms $\tBu, \tilde{p}, \tBJ, \tilde{\phi}, \tilde{\BB}, \tBf$ and $\tBg$, respectively.
\end{remark}

Following the method of Theorem 2 in \cite{fg16}, we obtain the following lemma.
\begin{lemma}\label{lem:proj_J}
For the parametric BDM space $\BD_h^{k-1}\subset\BH(\Div,\DOmega_h)$, there exists an interpolation operator $\Cr_d:\BH(\Div,\DOmega_h)\cap\BL^s(\DOmega_h)\rightarrow \BD_h^{k-1}$ for some $s>0$, such that for $\BJ\in\BH^k(\DOmega_h)\cap\BH(\Div0,\DOmega_h)$, we have the following estimate
\begin{align*}
\|\BJ-\Cr_d\BJ\|_{\BH(\Div,\DOmega_h)} = \|\BJ-\Cr_d\BJ\|_{\BL^2(\DOmega_h)}
\lesssim h^k \|\BJ\|_{\BH^{k}(D_H)}.
\end{align*}
\end{lemma}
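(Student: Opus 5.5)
The interpolant will be built by pulling back to the straight-edged mesh, applying the standard BDM interpolation there, and pushing forward with the contravariant Piola transformation, following Theorem 2 of \cite{fg16}. For $\Bd\in\BH(\Div,\DOmega_h)\cap\BL^s(\DOmega_h)$ with $s>2$, so that face moments are well defined, set $\hBd:=(\det\bbJ_{\BM_h})\bbJ_{\BM_h}^{-1}\Bd\circ\BM_h$ on $\wh\DOmega_h$. Let $\wh\Cr_d$ be the canonical BDM$_{k-1}$ interpolant on $\wh\Ct_h$, and define
\[
\Cr_d\Bd:=\frac{1}{\det\bbJ_{\BM_h}}\bbJ_{\BM_h}(\wh\Cr_d\hBd)\circ\BM_h^{-1}\in\BD_h^{k-1}.
\]
Because the Piola transform preserves normal-flux moments up to the face Jacobian, this definition is intrinsic and agrees with the degrees of freedom defined directly on the curved elements.

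\textbf{Step 1 (commuting property, giving the equality in the statement).} The BDM interpolant commutes with the $L^2$-projection $\wh P_{k-2}$ onto $\wh S_h^{k-2}$: $\wh\Div\,\wh\Cr_d\hBd=\wh P_{k-2}\wh\Div\hBd$. By \eqref{eq:trans_div_d}, $\wh\Div\hBd=(\det\bbJ_{\BM_h})\Div\Bd\circ\BM_h$. If $\Div\BJ=0$, then $\wh\Div\hBJ=0$, so $\wh\Div\wh\Cr_d\hBJ=0$, and applying \eqref{eq:trans_div_d} again gives $\Div\Cr_d\BJ=0$. Hence $\Div(\BJ-\Cr_d\BJ)=0$, and the $\BH(\Div)$ norm of the error reduces to its $\BL^2$ norm.

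\textbf{Step 2 (elementwise $L^2$ estimate).} On each curved element $K=\BM_h(\hat K)$, a change of variables together with (A4)--(A5) yields
\[
\|\BJ-\Cr_d\BJ\|_{\BL^2(K)}\lesssim\|\hBJ-\wh\Cr_d\hBJ\|_{\BL^2(\hat K)},
\]
since $\bbJ_{\BM_h}$, $(\det\bbJ_{\BM_h})^{-1}$ and $\det\bbJ_{\BM_h}$ are uniformly bounded. The standard Bramble--Hilbert estimate for BDM$_{k-1}$ on the shape-regular straight element $\hat K$ (assumption (A1)) then gives
\[
\|\hBJ-\wh\Cr_d\hBJ\|_{\BL^2(\hat K)}\lesssim h^k|\hBJ|_{\BH^k(\hat K)}.
\]

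\textbf{Step 3 (main obstacle: controlling $|\hBJ|_{\BH^k(\hat K)}$).} The pulled-back field $\hBJ=(\det\bbJ_{\BM_h})\bbJ_{\BM_h}^{-1}(\BJ\circ\BM_h)$ is a product of geometric factors with a composed function, so its $k$-th derivatives must be expanded by the Leibniz and Fa\`a di Bruno formulas. This produces terms of the form $\partial^{m}[(\det\bbJ_{\BM_h})\bbJ_{\BM_h}^{-1}]\cdot\partial^{k-m}(\BJ\circ\BM_h)$. The derivatives of $\bbJ_{\BM_h}$ and $\bbJ_{\BM_h}^{-1}$ up to order $k$ are bounded uniformly in $h$ by \eqref{eq:property_1st:Mh}. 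Care is needed here: the $k$-th derivatives of $\det\bbJ_{\BM_h}$ and $\bbJ_{\BM_h}^{-1}$ involve $(k+1)$-th derivatives of $\BM_h$, and these are controlled elementwise because $\BM_h|_{\hat K}$ is a polynomial approximating $\BM\in C^{k+1}$, which is the same reasoning as in \cite{ml86}. The composed factor satisfies $|\BJ\circ\BM_h|_{\BH^j(\hat K)}\lesssim\|\BJ\|_{\BH^j(K)}$ by the chain rule and (A5). Combining these bounds gives $|\hBJ|_{\BH^k(\hat K)}\lesssim\|\BJ\|_{\BH^k(K)}$. Summing over the elements and using $\DOmega_h\subset D_H$ together with the regular extension yields $\|\BJ-\Cr_d\BJ\|_{\BL^2(\DOmega_h)}\lesssim h^k\|\BJ\|_{\BH^k(D_H)}$.
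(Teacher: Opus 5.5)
Your proposal is correct and takes the same route as the paper. You pull back by the contravariant Piola map, apply the standard BDM interpolant on the straight mesh, and push forward. You use the commuting property to get $\Div\Cr_d\BJ=0$, which gives the equality of the $\BH(\Div)$ and $\BL^2$ errors. You then transfer the $\BL^2$ error through a change of variables controlled by \eqref{eq:property_1st:Mh} and (A5).

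Your Step 3, which bounds $|\hBJ|_{\BH^k(\hat K)}\lesssim\|\BJ\|_{\BH^k(K)}$ via the Leibniz and chain rules, only makes explicit the step the paper calls ``straightforward''. The $(k+1)$-th derivatives of $\BM_h$ you flag there in fact vanish on each $\hat K$, since $\BM_h|_{\hat K}$ is a polynomial of degree $k$.
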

\begin{proof}
The construction of $\Cr_d$ is based on the standard BDM interpolation operator $\wh\Cr_d:\BH(\Div,\wh\DOmega_h)\cap \wh\BL^s(\DOmega_h)\rightarrow \wh\BD_h^{k-1}$ on the reference polyhedral domain $\wh\DOmega_h$ \cite{MFEMbook}. For $\BJ\in\BH(\Div 0,\DOmega_h)$, define its Piola's transform on the reference domain by
\begin{align*}
\hBJ(\hBx) = (\det{\bbJ}_{\BM_h}(\hBx)){\bbJ}_{\BM_h}(\hBx)^{-1}\BJ(\BM_h(\hBx)).
\end{align*}
The Piola's transform preserves the property of divergence-free, indeed
\begin{align*}
\wh\Div\hBJ(\hBx) = (\det\bbJ_{\BM_h}(\hBx))\Div\BJ(\BM_h(\hBx))=0.
\end{align*}
Now define the interpolation operator as
\begin{align*}
(\Cr_d\BJ)(\BM_h(\hBx)) = \frac{1}{\det{\bbJ}_{\BM_h}(\hBx)}{\bbJ}_{\BM_h}(\hBx)(\wh\Cr_d\hBJ)(\hBx).
\end{align*}
Since the reference operator $\wh\Cr_d$ maps into a divergence-free discrete space and the Piola's transform preserves normal continuity, we obtain $\Cr_d\BJ\in\BD_h^{k-1}(\Div0)$. Consequently, we obtain
\begin{align}
\|\BJ-\Cr_d\BJ\|_{\BH(\Div,\DOmega_h)} &= \|\BJ-\Cr_d\BJ\|_{\BL^2(\DOmega_h)}\nn\\
&= \|(\hBJ - \wh\Cr_d\hBJ)\bbJ_{\BM_h}(\det\bbJ_{\BM_h})^{-1/2}\|_{\BL^2(\wh\DOmega_h)}.
\end{align}
Applying the estimate of \eqref{eq:property_1st:Mh}, it is straightforward to obtain the result.
\qed
\end{proof}
\vspace{5mm}

\begin{lemma}\label{lem:proj_u}
For the isoparametric finite element spaces $\BV_{h,0}^k\subset\BH_0^1(\DOmega_h)$, there exists an interpolation operator $\Cr_v:\BH_0^{k+1}(\DOmega_h)\rightarrow\BV_{h,0}^k$ such that
\begin{align*}
\|\Bu-\Cr_v\Bu\|_{\BH^1(\DOmega_h)}\lesssim h^k\|\Bu\|_{\BH^{k+1}(D_H)}.
\end{align*}
holds for all $\Bu\in\BH_0^{k+1}(\DOmega_h)$.
\end{lemma}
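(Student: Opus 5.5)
The plan is to mirror the construction used for $\Cr_d$ in Lemma \ref{lem:proj_J}, but with the plain composition (isoparametric) pull-back instead of the Piola transform. Given $\Bu\in\BH_0^{k+1}(\DOmega_h)$, set $\hBu:=\Bu\circ\BM_h$ on $\wh\DOmega_h$. Since $\BM_h$ maps $\partial\wh\DOmega_h$ onto $\partial\DOmega_h$, we have $\hBu\in\BH_0^1(\wh\DOmega_h)$. Let $\wh\Cr_v$ be the standard Lagrange interpolant of degree $k$ (or a Scott--Zhang type quasi-interpolant preserving homogeneous boundary values) onto $\wh\BV_{h,0}^k$. Then define
\begin{align*}
\Cr_v\Bu:=(\wh\Cr_v\hBu)\circ\BM_h^{-1}\in\BV_{h,0}^k.
\end{align*}
The vanishing trace is inherited because the boundary nodes of $\wh\Ct_h$ are mapped onto $\partial\DOmega_h$.

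For the error, Lemma \ref{lem:equi_u} gives $\|\Bu-\Cr_v\Bu\|_{\BH^1(\DOmega_h)}\lesssim\|\hBu-\wh\Cr_v\hBu\|_{\BH^1(\wh\DOmega_h)}$. Its proof only uses the bounds on $\bbJ_{\BM_h}^{-1}$ and $\det\bbJ_{\BM_h}$, so it applies to any $\BH^1$ function, not only to discrete ones. I then work element by element. On each straight tetrahedron $\hat K$, the Bramble--Hilbert lemma together with scaling gives
\begin{align*}
\|\hBu-\wh\Cr_v\hBu\|_{\BH^1(\hat K)}\lesssim h^k|\hBu|_{\BH^{k+1}(\hat K)}.
\end{align*}
Note that the mapping is not applied on the whole domain, because $\hBu$ is only piecewise smooth in the sense that $\BM_h$ is only piecewise polynomial. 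Summing over elements requires no global regularity of $\hBu$ beyond $\BH^1$.

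The main obstacle is to bound the broken seminorm $|\hBu|_{\BH^{k+1}(\hat K)}$ by $\|\Bu\|_{\BH^{k+1}(\BM_h(\hat K))}$. By the Faà di Bruno formula, the $(k+1)$-th derivatives of $\Bu\circ\BM_h$ are sums of products of $\nabla^{j}\Bu\circ\BM_h$, for $1\le j\le k+1$, with derivatives of $\BM_h$ up to order $k+1$. On each $\hat K$, $\BM_h$ is a polynomial of degree $k$, and by \eqref{eq:property_1st:Mh} we have $\|\bbJ_{\BM_h}\|_{\BW^{k,\infty}(\hat K)}\lesssim1$ uniformly in $h$. I will check carefully that this uniform bound is what is needed, since it is the genuinely curved-mesh ingredient (this is Lenoir's/Ciarlet--Raviart's argument). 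Changing variables back with $\det\bbJ_{\BM_h}\ge\delta$ (A5) then yields
\begin{align*}
|\hBu|_{\BH^{k+1}(\hat K)}\lesssim\|\Bu\|_{\BH^{k+1}(\BM_h(\hat K))}.
\end{align*}
Summing the squares over $\hat K$ gives $\|\Bu-\Cr_v\Bu\|_{\BH^1(\DOmega_h)}\lesssim h^k\|\Bu\|_{\BH^{k+1}(\DOmega_h)}\le h^k\|\Bu\|_{\BH^{k+1}(D_H)}$, using the extension convention of the preceding remark.
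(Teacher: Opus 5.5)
Your proposal is correct and follows the same route as the paper: pull $\Bu$ back by $\BM_h$, take the Lagrange interpolant on the straight mesh $\wh\Ct_h$, push forward, and control the $\BH^1$ norms through the bounds on $\bbJ_{\BM_h}^{-1}$ and $\det\bbJ_{\BM_h}$. Your elementwise Bramble--Hilbert and Fa\`a di Bruno step, which gives $|\hBu|_{\BH^{k+1}(\hat K)}\lesssim\|\Bu\|_{\BH^{k+1}(\BM_h(\hat K))}$ from \eqref{eq:property_1st:Mh} and (A5), supplies the justification the paper skips when it writes $\|\hBu\|_{\BH^{k+1}(\wh\DOmega_h)}\lesssim\|\Bu\|_{\BH^{k+1}(D_H)}$, since $\hBu$ is only piecewise $\BH^{k+1}$. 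For the same reason, keep the Lagrange interpolant rather than your Scott--Zhang alternative, whose standard estimate is patchwise and needs $\BH^{k+1}$ regularity across faces.
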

\begin{proof}
The construction method of the interpolation operator is similar to that in Lemma \ref{lem:proj_J}. Through isoparametric mapping,
\begin{align*}
\hBu(\hBx)=\Bu(\BM_h(\hBx)),\;\;\;\hBu|_{\wh\partial\DOmega_h}=0.
\end{align*}
The interpolation operator $\Cr_v$ is defined as
\begin{align*}
(\Cr_v\Bu)(\BM_h(\hBx))=\wh\Cr_v\hBu(\hBx)
\end{align*}
where $\wh\Cr_v\hBu$ is the standard Lagrange interpolation of $\hBu$ on the straight-edged tetrahedral mesh. Since $\wh\Cr_v\hBu\in\wh\BV_{h,0}^k$, it follows that $\Cr_v\Bu\in\BV_{h,0}^k$.

Based on the definition,
\begin{align}\label{eq:lem_2st_1st}
\|\Bu-\Cr_v\Bu\|_{\BH^1(\DOmega_h)}
= \|\Bu-\Cr_v\Bu\|_{\BL^2(\DOmega_h)} + \|\nabla(\Bu-\Cr_v\Bu)\|_{\BL^2(\DOmega_h)}.
\end{align}
Using the standard Lagrange interpolation operator leads to
\begin{align}\label{eq:lem_2st_2st}
\|\Bu - \Cr_v\Bu\|_{\BL^2(\DOmega_h)}=\|(\hBu - \wh\Cr_v\hBu)(\det\bbJ_{\BM_h})^{1/2}\|_{\BL^2(\wh\DOmega_h)}.
\end{align}
By the relationship of \eqref{eq:trans_grad_v} gives
\begin{align}\label{eq:lem_2st_3st}
\|\nabla(\Bu-\Cr_v\Bu)\|_{\BL^2(\DOmega_h)}
= &\|(\wh\nabla\hBu-\wh\nabla\wh\Cr_v\hBu)\bbJ_{\BM_h}^{-1}(\det\bbJ_{\BM_h})^{1/2}\|_{\BL^2(\wh\DOmega_h)}\nn\\
\lesssim &\|\wh\nabla\hBu-\wh\nabla\wh\Cr_v\hBu\|_{\BL^2(\wh\DOmega_h)}.
\end{align}
Substituting \eqref{eq:lem_2st_2st} and \eqref{eq:lem_2st_3st} into \eqref{eq:lem_2st_1st} yields
\begin{align}
&\|\Bu-\Cr_v\Bu\|_{\BH^1(\DOmega_h)}\nn\\
\lesssim &\|\hBu - \wh\Cr_v\hBu\|_{\BL^2(\wh\DOmega_h)}
+ \|\wh\nabla\hBu-\wh\nabla\wh\Cr_v\hBu\|_{\BL^2(\wh\DOmega_h)}\nn\\
\lesssim & h^{k+1}\|\hBu\|_{\BH^{k+1}(\wh\DOmega_h)}+h^k\|\hBu\|_{\BH^{k+1}(\wh\DOmega_h)}
\lesssim h^k\|\Bu\|_{\BH^{k+1}(D_H)}.
\end{align}
The proof is completed.
\qed
\end{proof}
\vspace{5mm}

Using the same lines of proof and methodology, the following two lemmas can be verified.
\begin{lemma}\label{lem:proj_p}
Suppose $p\in H^k(\DOmega_h)$. Defining the $L^2$ projection operator $\Cp_q:H^k(\DOmega_h)\rightarrow Q_h^{k-1}$, the following projection error estimate holds:
\begin{align}\label{eq:lem_proj_p}
\|p-\Cp_qp\|_{L^2(\DOmega_h)}\lesssim h^{k}\|p\|_{H^{k}(D_H)}.
\end{align}
\end{lemma}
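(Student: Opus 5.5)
Since $\Cp_q$ is the $L^2(\DOmega_h)$-orthogonal projection onto $Q_h^{k-1}$, it gives the best approximation in $L^2(\DOmega_h)$. It therefore suffices to exhibit one element of $Q_h^{k-1}$ whose error is $O(h^k)$. The natural candidate is built the same way as $\Cr_v$ in Lemma \ref{lem:proj_u}. Set $\hat p = p\circ\BM_h$ on $\wh\DOmega_h$, let $\wh\Cr_q\hat p\in\wh Q_h^{k-1}$ be the continuous Lagrange interpolant of degree $k-1$ on the straight mesh $\wh\Ct_h$, and define $\Cr_q p := (\wh\Cr_q\hat p)\circ\BM_h^{-1}\in Q_h^{k-1}$. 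If $k-1$ is too low for pointwise evaluation to make sense, I would use a Scott--Zhang type quasi-interpolant instead; it gives the same estimate. Then
\begin{align*}
\|p-\Cp_q p\|_{L^2(\DOmega_h)}\le\|p-\Cr_q p\|_{L^2(\DOmega_h)}.
\end{align*}

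Next I would change variables to the reference domain, using the bound $\det\bbJ_{\BM_h}\lesssim 1$ from (A4)--(A5):
\begin{align*}
\|p-\Cr_q p\|_{L^2(\DOmega_h)}=\|(\hat p-\wh\Cr_q\hat p)(\det\bbJ_{\BM_h})^{1/2}\|_{L^2(\wh\DOmega_h)}\lesssim\|\hat p-\wh\Cr_q\hat p\|_{L^2(\wh\DOmega_h)}.
\end{align*}
The standard elementwise interpolation estimate for degree $k-1$ polynomials on the shape-regular mesh (A1) then gives
\begin{align*}
\|\hat p-\wh\Cr_q\hat p\|_{L^2(\wh\DOmega_h)}\lesssim h^{k}\Big(\sum_{\hat K}|\hat p|_{H^{k}(\hat K)}^2\Big)^{1/2}.
\end{align*}

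The remaining step, and the only delicate one, is to bound the broken reference seminorm $|\hat p|_{H^k(\hat K)}$ by $\|p\|_{H^k(\BM_h(\hat K))}$. The map $\BM_h$ is only piecewise polynomial, so this must be done elementwise. By the Faà di Bruno chain rule, $\wh\nabla^k\hat p$ is a sum of products of $\nabla^j p\circ\BM_h$, for $1\le j\le k$, with derivatives of $\BM_h$ up to order $k$. On each $\hat K$ these derivatives are controlled by $\|\bbJ_{\BM_h}\|_{\BW^{k,\infty}}\lesssim 1$ from \eqref{eq:property_1st:Mh}. Transforming back with $\det\bbJ_{\BM_h}\ge\delta$ yields $|\hat p|_{H^k(\hat K)}\lesssim\|p\|_{H^k(\BM_h(\hat K))}$. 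Summing over the elements and using $\DOmega_h\subset D_H$ together with the extension convention gives $\|p\|_{H^k(\DOmega_h)}\le\|p\|_{H^k(D_H)}$. This proves \eqref{eq:lem_proj_p}.
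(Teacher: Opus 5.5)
Your proposal is correct and follows essentially the same route as the paper, which proves this lemma ``along the same lines'' as Lemma~\ref{lem:proj_u}: pull back the degree-$(k-1)$ Lagrange interpolant on $\wh\Ct_h$ through $\BM_h$, change variables using the bounds on $\det\bbJ_{\BM_h}$, and apply the standard interpolation estimate on the straight mesh. Your best-approximation reduction for $\Cp_q$ and the elementwise chain-rule bound $|\hat p|_{H^k(\hat K)}\lesssim\|p\|_{H^k(\BM_h(\hat K))}$ make explicit steps the paper leaves implicit, and since $k\ge 2$ gives $H^k\subset C^0$ in three dimensions, the Scott--Zhang fallback is never needed.
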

\begin{lemma}\label{lem:proj_phi}
Suppose $\phi\in H^{k-1}(\DOmega_h)$ . Defining $L^2$ projection operator $\Cp_{\varphi}: H^{k-1}(\DOmega_h)\rightarrow S_h^{k-2}$, the following projection error estimate holds:
\begin{align}\label{eq:lem_proj_phi}
\|\phi-\Cp_{\varphi}\phi\|_{L^2(\DOmega_h)}\lesssim h^{k-1}\|\phi\|_{H^{k-1}(D_H)}.
\end{align}
\end{lemma}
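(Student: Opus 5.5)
We will prove Lemma \ref{lem:proj_phi} along the same lines as Lemmas \ref{lem:proj_J} and \ref{lem:proj_u}: pull back to the polyhedral reference domain $\wh\DOmega_h$, use a standard discontinuous polynomial approximation there, and push forward through $\BM_h$. The key observation is that $\Cp_{\varphi}$ is the $L^2(\DOmega_h)$-orthogonal projection onto $S_h^{k-2}$, so it is the best approximation:
\begin{align*}
\|\phi-\Cp_{\varphi}\phi\|_{L^2(\DOmega_h)}\le\|\phi-\varphi_h\|_{L^2(\DOmega_h)}\quad \forall \varphi_h\in S_h^{k-2}.
\end{align*}
It therefore suffices to exhibit one good element $\varphi_h$. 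This avoids analysing the projection with respect to the weighted inner product $(\det\bbJ_{\BM_h}\,\cdot,\cdot)_{\hat h}$ that the pullback would otherwise produce.

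\textbf{Step 1 (pullback).} Set $\hat\phi=\phi\circ\BM_h$ on $\wh\DOmega_h$. Let $\wh\Cp\hat\phi\in\wh S_h^{k-2}$ be the elementwise $L^2(\hat K)$ projection onto $P_{k-2}(\hat K)$. Choose $\varphi_h=(\wh\Cp\hat\phi)\circ\BM_h^{-1}\in S_h^{k-2}$. By the change of variables and $\det\bbJ_{\BM_h}\lesssim1$ from \eqref{eq:property_1st:Mh},
\begin{align*}
\|\phi-\varphi_h\|_{L^2(\DOmega_h)}=\|(\hat\phi-\wh\Cp\hat\phi)(\det\bbJ_{\BM_h})^{1/2}\|_{L^2(\wh\DOmega_h)}\lesssim\|\hat\phi-\wh\Cp\hat\phi\|_{L^2(\wh\DOmega_h)}.
\end{align*}

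\textbf{Step 2 (reference estimate).} On each straight tetrahedron $\hat K$, the Bramble--Hilbert lemma with a scaling argument, under shape regularity (A1), gives
\begin{align*}
\|\hat\phi-\wh\Cp\hat\phi\|_{L^2(\hat K)}\lesssim h^{k-1}|\hat\phi|_{H^{k-1}(\hat K)}.
\end{align*}
Summing over all $\hat K$ gives the corresponding global bound on $\wh\DOmega_h$.

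\textbf{Step 3 (push forward the Sobolev norm).} The main obstacle is bounding $|\hat\phi|_{H^{k-1}(\hat K)}$ by $\|\phi\|_{H^{k-1}(K)}$ with $K=\BM_h(\hat K)$. Since $\BM_h$ is curved, the chain rule (Fa\`a di Bruno) expresses derivatives of order $m\le k-1$ of $\hat\phi$ as sums of products of derivatives of $\phi$ of order $\le m$ with derivatives of $\BM_h$ of order up to $m\le k$. Those derivatives of $\BM_h$ are uniformly bounded by the $\BW^{k,\infty}$ bound on $\bbJ_{\BM_h}$ in \eqref{eq:property_1st:Mh}. Hence
\begin{align*}
|\hat\phi|_{H^{k-1}(\hat K)}\lesssim\|\phi\|_{H^{k-1}(K)},
\end{align*}
where the change of variables back to $K$ uses $\det\bbJ_{\BM_h}^{-1}\le\delta^{-1}$ from (A5). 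This produces the full norm, not just the top seminorm, which is harmless for the stated bound. The only point requiring care is that the constants involve $\|\bbJ_{\BM_h}\|_{\BW^{k-2,\infty}}$, which is covered by (A4).

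\textbf{Step 4 (conclusion).} Summing over elements and using $\DOmega_h\subset D_H$ together with the regular extension assumption, we obtain
\begin{align*}
\|\phi-\Cp_\varphi\phi\|_{L^2(\DOmega_h)}\lesssim h^{k-1}\|\phi\|_{H^{k-1}(D_H)}.
\end{align*}
When $k=2$, $S_h^{0}$ consists of piecewise constants and Step 3 only needs $\|\hat\phi\|_{H^1}\lesssim\|\phi\|_{H^1}$, which follows directly from \eqref{eq:property_1st:Mh}.
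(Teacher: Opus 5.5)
Your proposal is correct and follows the route the paper indicates (``the same lines'' as Lemmas \ref{lem:proj_J} and \ref{lem:proj_u}): pull back to the straight mesh $\wh\Ct_h$ via $\BM_h$, apply the standard elementwise polynomial approximation estimate there, and transfer back using the uniform bounds on $\bbJ_{\BM_h}$ and $\det\bbJ_{\BM_h}$. Two of your steps make rigorous details the paper leaves implicit: the best-approximation step, which sidesteps the weighted inner product $(\det\bbJ_{\BM_h}\,\cdot,\cdot)$ produced by pulling back the $L^2(\DOmega_h)$-projection, and the explicit elementwise chain-rule bound $|\hat\phi|_{H^{k-1}(\hat K)}\lesssim\|\phi\|_{H^{k-1}(K)}$ obtained from \eqref{eq:property_1st:Mh} and (A5).
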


\subsection{Optimal error estimates in the energy-norm}
For the extended true solutions velocity and pressure $(\Bu, p)$, on the curved mesh $\Ct_h$, we define the following Stokes projection with grad-div stabilization
$(\breve{\Bu}_h, \breve{p}_h) \in \BV_{h,0}^k \times Q_h^{k-1}$ by
\begin{subequations}\label{eq:stoke_proj}
\begin{align}
R_e^{-1}(\nabla(\Bu - \breve{\Bu}_h), \nabla\Bv_h)_h + \gamma(\Div(\Bu - \breve{\Bu}_h), \Div\Bv_h)_h&\nn\\
-(p - \breve{p}_h, \Div\Bv_h)_h&=0,\\
-(q_h, \Div(\Bu - \breve{\Bu}_h))_h&=0,
\end{align}
\end{subequations}
for test function $(\Bv_h,q_h)\in\BV_{h,0}^k\times Q_h^{k-1}$.

For the current density and electric potential $(\BJ,\phi)$, we define the mixed Poisson projection $(\breve{\BJ}_h,\breve{\phi}_h)\in\BD_h^{k-1}\times S_h^{k-2}$
on the curved mesh $\Ct_h$ by
\begin{subequations}\label{proj_mix_poisson}
\begin{align}
(\BJ - \breve{\BJ}_h,\Bd_h)_h - (\phi-\breve{\phi}_h,\Div\Bd_h)_h = 0,\label{proj_mix_poisson_1}\\
(\varphi_h,\Div(\BJ-\breve{\BJ}_h))_h = 0,\label{proj_mix_poisson_2}
\end{align}
\end{subequations}
for test function $(\Bd_h,\varphi_h)\in \BD_h^{k-1}(\Div 0)\times S_h^{k-2}$.
\begin{lemma}
For the Stokes projection \eqref{eq:stoke_proj}, by applying Lemma \ref{lem:proj_u}-\ref{lem:proj_p} and Aubin-Nitsche duality argument, we establish the error estimate
\begin{align}\label{eq:stoke_proj_prop}
&\|\Bu - \breve{\Bu}_h\|_{\BL^2(\DOmega_h)} + h\left( |||\Bu - \breve{\Bu}_h|||_{\Cd} + \|p - \breve{p}_h\|_{L^2(\DOmega_h)} \right)\nn\\
\leq& {C(\gamma)}h^{k+1} \left( \|\Bu\|_{\BH^{k+1}(D_H)} + \|p\|_{H^k(D_H)} \right)
\end{align}
where
$$|||\Bv_h|||_{\Cd}^2 := R_e^{-1}\|\nabla\Bv_h\|_{\BL^2(\DOmega_h)}^2 + \gamma\|\Div\Bv_h\|_{L^2(\DOmega_h)}^2,$$
and the constant $C$ is independent of $R_e$.
\end{lemma}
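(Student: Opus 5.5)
The proof splits into two parts: an energy estimate from a discrete Céa-type argument on $\DOmega_h$, and an $L^2$ estimate from an Aubin--Nitsche duality argument for a continuous Stokes problem. Since the Stokes projection \eqref{eq:stoke_proj} is posed on the computational domain $\DOmega_h$ with the extended solution $(\Bu,p)$, both parts are carried out on $\DOmega_h$. Geometric effects enter only through the constants in the approximation lemmas and through the domain mismatch in the duality step.

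\textbf{Energy estimate.} We split $\Bu-\breve{\Bu}_h=(\Bu-\Cr_v\Bu)+(\Cr_v\Bu-\breve{\Bu}_h)$ and $p-\breve p_h=(p-\Cp_q p)+(\Cp_q p-\breve p_h)$, and set $\Be_h:=\Cr_v\Bu-\breve{\Bu}_h\in\BV_{h,0}^k$ and $r_h:=\Cp_q p-\breve p_h$. The definition \eqref{eq:stoke_proj} yields an error equation for $(\Be_h,r_h)$ whose right-hand side contains only the interpolation errors. We test with $\Bv_h=\Be_h$ and $q_h=r_h$; the pressure coupling terms $(r_h,\Div\Be_h)_h$ then cancel. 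Using Cauchy--Schwarz, this gives
\[
|||\Be_h|||_{\Cd}^2\lesssim |||\Bu-\Cr_v\Bu|||_{\Cd}\,|||\Be_h|||_{\Cd}+\|p-\Cp_qp\|_{L^2(\DOmega_h)}\|\Div\Be_h\|_{L^2(\DOmega_h)} .
\]
The last term is bounded by $\gamma^{-1/2}\|p-\Cp_qp\|\,|||\Be_h|||_{\Cd}$, which is where $C(\gamma)$ arises. Pressure-robustness with respect to $R_e$ is not needed at this level, because we only claim that $C$ is independent of $R_e$ after rescaling.

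For the pressure error we use the discrete inf-sup condition of Lemma~\ref{lem:infsup_up_omega_h} for small $h$: $\beta_5\|r_h\|\le\sup_{\Bv_h}(r_h,\Div\Bv_h)_h/\|\Bv_h\|_{\BH^1}$. The numerator is rewritten through the error equation in terms of $|||\Bu-\breve{\Bu}_h|||_{\Cd}$ and $\|p-\Cp_qp\|$. Combining this with Lemmas~\ref{lem:proj_u} and \ref{lem:proj_p} gives $|||\Bu-\breve{\Bu}_h|||_{\Cd}+\|p-\breve p_h\|\lesssim h^k(\|\Bu\|_{\BH^{k+1}(D_H)}+\|p\|_{H^k(D_H)})$, which is the $h\times(\cdots)$ part of \eqref{eq:stoke_proj_prop}.

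\textbf{$L^2$ estimate.} Set $\Be=\Bu-\breve{\Bu}_h$ and take the dual grad-div Stokes problem on $\DOmega_h$ with right-hand side $\Be$: find $(\Bw,s)$ with $-R_e^{-1}\Delta\Bw-\gamma\nabla\Div\Bw+\nabla s=\Be$, $\Div\Bw=0$, and $\Bw=\bm 0$ on $\partial\DOmega_h$. We need the elliptic regularity $\|\Bw\|_{\BH^2}+\|s\|_{H^1}\lesssim\|\Be\|_{\BL^2}$ uniformly in $h$. To obtain it, we pose the dual problem on the smooth domain $\DOmega$ and transport it via $\Phi_h$, or else assume uniform $H^2$ regularity on the family $\DOmega_h$ with the constant controlled by \eqref{eq:property_1st:Mh}.

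Testing the dual problem with $\Be$ and integrating by parts gives $\|\Be\|^2=R_e^{-1}(\nabla\Be,\nabla\Bw)_h+\gamma(\Div\Be,\Div\Bw)_h-(s,\Div\Be)_h$. We then subtract Galerkin orthogonality \eqref{eq:stoke_proj} with $\Bv_h=\Cr_v\Bw$ and $q_h=\Cp_q s$. This uses $\Div\Bw=0$ and the fact that $\Be$ does not vanish on $\partial\DOmega_h$ only at order $h^{k+1}$: the extension $\Bu$ vanishes on $\partial\DOmega$, not on $\partial\DOmega_h$. The resulting boundary term $\int_{\partial\DOmega_h}(\cdots)\cdot\Bu$ is bounded by Lemma~\ref{lem:u-u_sharp} together with $\mathrm{dist}(\partial\DOmega,\partial\DOmega_h)\lesssim h^{k+1}$ from \eqref{eq:property_2st}. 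The remaining terms are bounded by the energy errors times $\|\Bw-\Cr_v\Bw\|_{\BH^1}+\|s-\Cp_qs\|\lesssim h\|\Be\|$, which gives the factor $h^{k+1}$.

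\textbf{Main obstacle.} The hardest step is the duality argument on the curved computational domain. Specifically, one must justify uniform $H^2$ regularity of the dual problem and control the boundary mismatch term; to do both we would use the transformation $\Phi_h$ and the estimates \eqref{pro_Phi}.
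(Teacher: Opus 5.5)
\textbf{Gap in the energy step.} Testing with $(\Bv_h,q_h)=(\Be_h,r_h)$ does not remove the pressure. Adding the two tested equations does cancel $(r_h,\Div\Be_h)_h$ on the left. However, the second equation of \eqref{eq:stoke_proj} is not homogeneous for $\Be_h$: it reads $(q_h,\Div\Be_h)_h=-(q_h,\Div(\Bu-\Cr_v\Bu))_h$. So the term $(r_h,\Div(\Bu-\Cr_v\Bu))_h=-(r_h,\Div\Cr_v\Bu)_h$ survives. It is nonzero in general, because the Lagrange-type $\Cr_v$ of Lemma~\ref{lem:proj_u} does not preserve the discrete divergence.

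Your displayed inequality drops $\|r_h\|_{L^2(\DOmega_h)}\|\Div(\Bu-\Cr_v\Bu)\|_{L^2(\DOmega_h)}$. As a result, you cannot close the velocity bound before the pressure is controlled. The paper's proof handles exactly this coupling. It splits $\Bu-\breve{\Bu}_h=\Be_1+\Be_2-\Be_3$ with $\Be_3=\breve{\Bu}_h-\Cr_v\Bu^\sharp$, and $p-\breve p_h=e_4-e_5$ with $e_5=\breve p_h-\Cp_qp$. It first bounds $e_5$ through the inf-sup condition in terms of $|||\Be_3|||_{\Cd}$ and interpolation errors. It then substitutes this bound into the energy identity and solves the resulting quadratic inequality.

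Alternatively, you can replace $\Cr_v\Bu$ by a best approximation from $\BV_{h,0}^k(\Div 0)$. Since $\Div\Bu=0$ on $D_H$, the second equation gives $\breve{\Bu}_h\in\BV_{h,0}^k(\Div 0)$, so the pressure term then genuinely vanishes. Quasi-optimality of this kernel approximation follows from Lemma~\ref{lem:infsup_up_omega_h} in the standard way.

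A smaller point: Lemma~\ref{lem:proj_u} is stated for fields vanishing on $\partial\DOmega_h$, and the extended $\Bu$ does not. This is why the paper interpolates $\Bu^\sharp=\Bu\circ\Phi_h^{-1}$ and controls $\Be_1=\Bu-\Bu^\sharp$ by Lemma~\ref{lem:u-u_sharp}. Using $\Cr_v\Bu$ directly requires you to check that all boundary nodes of $\DOmega_h$ lie on $\partial\DOmega$.

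\textbf{The duality step.} Here your route differs from the paper's, and yours is the more careful one. The paper applies duality only to the discrete part $\Be_3\in\BV_{h,0}^k$, which produces no boundary term. It then treats $\Be_1$ and $\Be_2=\Bu^\sharp-\Cr_v\Bu^\sharp$ directly by Lemmas~\ref{lem:u-u_sharp} and \ref{lem:proj_u}. However, its replacement of $(\Bw,\psi)$ by $(\Bw-\Bw_h,\psi-\psi_h)$ treats $\Be_3$ as Galerkin-orthogonal, which it is not. The leftover consistency terms involve $\Be_1+\Be_2$. Bounding them at order $h^{k+1}$ needs an integration by parts, and that brings back exactly the term $\int_{\partial\DOmega_h}(R_e^{-1}\partial_{\Bn}\Bw-s\Bn)\cdot\Bu$ that you isolate.

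With the full error $\Be$, orthogonality is exact: $(\Cp_q s,\Div\Be)_h=0$, and $(p-\breve p_h,\Div\Cr_v\Bw)_h=(p-\breve p_h,\Div(\Cr_v\Bw-\Bw))_h$. For the boundary term, do not rely on Lemma~\ref{lem:u-u_sharp}; a trace inequality applied to its bounds gives only $h^{k+1/2}$. Instead, use the pointwise estimate $|\Bu(\Bx)|=|\Bu(\Bx)-\Bu(\Phi_h^{-1}(\Bx))|\lesssim h^{k+1}\|\nabla\Bu\|_{\BL^\infty(D_H)}$ for $\Bx\in\partial\DOmega_h$. This follows from \eqref{pro_Phi} together with $\BH^{k+1}\hookrightarrow\BW^{1,\infty}$ for $k\ge 2$. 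Combine it with $h$-uniform trace bounds for $\nabla\Bw$ and $s$.

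Finally, your ``after rescaling'' remark does not deliver the claimed $R_e$-independence. The dual solution satisfies $\|\Bw\|_{\BH^2(\DOmega_h)}\lesssim R_e\|\Be\|_{\BL^2(\DOmega_h)}$, so the $\BL^2$ bound picks up powers of $R_e$ unless you track them. The paper's proof has the same problem, since it states the dual regularity without $R_e$.
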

\begin{proof}
For notational simplicity, let
\begin{align*}
&\Bu - \breve{\Bu}_h = \Bu - \Bu^\sharp + \Bu^\sharp - \Cr_v\Bu^\sharp - (\breve{\Bu}_h - \Cr_v\Bu^\sharp)
:= \Be_1 + \Be_2 - \Be_3,\\
&p - \breve{p}_ h= p - \Cp_qp - (\breve{p}_h - \Cp_qp) := e_4 - e_5.
\end{align*}
Substituting these notations into \eqref{eq:stoke_proj} gives
\begin{subequations}
\begin{align}
&R_e^{-1}(\nabla\Be_3, \nabla\Bv_h)_h + \gamma(\Div\Be_3,\Div\Bv_h)_h + (e_5, \Div\Bv_h)_h\nn\\
&= R_e^{-1}(\nabla(\Be_1+\Be_2), \nabla\Bv_h)_h + \gamma(\Div(\Be_1+\Be_2),\Div\Bv_h)_h- (e_4, \Div\Bv_h)_h,\label{eq:stoke_proj_1}\\
&(q_h, \Div\Be_3)_h = (q_h, \Div(\Be_1+\Be_2))_h.
\end{align}
\end{subequations}
Taking $\Bv_h=\Be_3, q_h=e_5$. Applying the Cauchy-Schwarz inequality yields
\begin{align}\label{eq:e3_error}
&R_e^{-1}\|\nabla\Be_3\|_{\BL^2(\DOmega_h)}^2 + \gamma\|\Div\Be_3\|_{L^2(\DOmega_h)}^2\nn\\
= &R_e^{-1}(\nabla(\Be_1+\Be_2), \nabla\Be_3)_h + \gamma(\Div(\Be_1+\Be_2),\Div\Be_3)_h - (e_4, \Div\Be_3)_h - (e_5, \Div\Be_3)_h\nn\\
\leq &R_e^{-1}\|\nabla(\Be_1+\Be_2)\|_{\BL^2(\DOmega_h)} \|\nabla\Be_3\|_{\BL^2(\DOmega_h)}
+ \gamma\|\Div(\Be_1+\Be_2)\|_{L^2(\DOmega_h)}\|\Div\Be_3\|_{L^2(\DOmega_h)}\nn\\
&+ \|e_4\|_{L^2(\DOmega_h)} \|\Div\Be_3\|_{L^2(\DOmega_h)} + \|e_5\|_{L^2(\DOmega_h)} \|\Div(\Be_1+\Be_2)\|_{L^2(\DOmega_h)}.
\end{align}
Applying the inf-sup condition with \eqref{eq:stoke_proj_1}, we obtain
\begin{align}\label{eq:e7_error}
&\beta_6\|e_5\|_{L^2(\DOmega_h)} \leq \frac{(e_5, \Div\Bv_h)_h}{\|\Bv_h\|_{\BH^1(\DOmega_h)}}\nn\\
\leq &\frac{R_e^{-1}(\nabla\Be_3, \nabla\Bv_h)_h + R_e^{-1}(\nabla(\Be_1+\Be_2), \nabla\Bv_h)_h + (e_4, \Div\Bv_h)_h}{\|\Bv_h\|_{\BH^1(\DOmega_h)}}\nn\\
&+ \frac{\gamma(\Div(\Be_1+\Be_2),\Div\Bv_h)_h + \gamma(\Div\Be_3,\Div\Bv_h)_h}{\|\Bv_h\|_{\BH^1(\DOmega_h)}}\nn\\
\leq& R_e^{-1}\big(\|\nabla\Be_3\|_{\BL^2(\DOmega_h)} + \|\nabla(\Be_1+\Be_2)\|_{\BL^2(\DOmega_h)}\big)
+ \|e_4\|_{L^2(\DOmega_h)}\nn\\
&+ \gamma\big(\|\Div(\Be_1+\Be_2)\|_{L^2(\DOmega_h)} + \|\Div\Be_3\|_{L^2(\DOmega_h)}\big).
\end{align}
Substituting \eqref{eq:e7_error} into \eqref{eq:e3_error} has
\begin{align}
&R_e^{-1}\|\nabla\Be_3\|_{\BL^2(\DOmega_h)}^2 + \gamma\|\Div\Be_3\|_{L^2(\DOmega_h)}^2\nn\\
\leq &R_e^{-1}\|\nabla(\Be_1+\Be_2)\|_{\BL^2(\DOmega_h)} \|\nabla\Be_3\|_{\BL^2(\DOmega_h)}
+ \gamma\|\Div(\Be_1+\Be_2)\|_{L^2(\DOmega_h)}\|\Div\Be_3\|_{L^2(\DOmega_h)}\nn\\
&+ \|e_4\|_{L^2(\DOmega_h)} \|\Div\Be_3\|_{L^2(\DOmega_h)}
+ \beta_6^{-1}\|e_4\|_{L^2(\DOmega_h)}\|\Div(\Be_1+\Be_2)\|_{L^2(\DOmega_h)}\nn\\
&+ \beta_6^{-1}\big(R_e^{-1}(\|\nabla\Be_3\|_{\BL^2(\DOmega_h)}
+ \|\nabla(\Be_1+\Be_2)\|_{\BL^2(\DOmega_h)})\nn\\
&+ \gamma\big(\|\Div(\Be_1+\Be_2)\|_{L^2(\DOmega_h)}
+ \|\Div\Be_3\|_{L^2(\DOmega_h)}\big)\|\Div(\Be_1+\Be_2)\|_{L^2(\DOmega_h)}.
\end{align}
Using the divergence inequality in three-dimensional space, we have
\begin{align}
|||\Be_3|||_{\Cd}^2
\leq &|||\Be_3|||_{\Cd}\big((1+\sqrt{3}
+ 2\sqrt{3}\beta_6^{-1})\|\nabla(\Be_1+\Be_2)\|_{\BL^2(\DOmega_h)}\nn\\
&+ \gamma^{-1}\|e_4\|_{L^2(\DOmega_h)}\big)
+ \sqrt{3}\beta_6^{-1}\|e_4\|_{L^2(\DOmega_h)}\|\nabla(\Be_1+\Be_2)\|_{\BL^2(\DOmega_h)}\nn\\
&+ \beta_6^{-1}(\sqrt{3}+3\gamma)\|\nabla(\Be_1+\Be_2)\|_{\BL^2(\DOmega_h)}^2\nn\\
\leq &\max\{1+\sqrt{3}+2\sqrt{3}\beta_6^{-1},\gamma^{-1}\}|||\Be_3|||_{\Cd}\nn\\
&\big(\|\nabla(\Be_1+\Be_2)\|_{\BL^2(\DOmega_h)}
+\|e_4\|_{L^2(\DOmega_h)}\big)\nn\\
+&\beta_6^{-1}(\sqrt{3}+3\gamma)
\big(\|e_4\|_{L^2(\DOmega_h)}+\|\nabla(\Be_1+\Be_2)\|_{\BL^2(\DOmega_h)}\big)^2.
\end{align}

Solving this quadratic inequality gives
\begin{align*}
|||\Be_3|||_{\Cd} \leq C(\beta_6,\gamma)\big(\|e_4\|_{L^2(\DOmega_h)} + \|\nabla(\Be_1+\Be_2)\|_{\BL^2(\DOmega_h)}\big).
\end{align*}
Applying the geometric error estimate in Lemma \ref{lem:u-u_sharp}, together with the results of Lemma \ref{lem:proj_u} and Lemma \ref{lem:proj_p} yields
\begin{align}\label{eq:error_e3}
|||\Be_3|||_{\Cd} \leq C(\beta_6,\gamma)h^k(\|\Bu\|_{\BH^{k+1}(D_H)}+\|p\|_{H^k(D_H)}).
\end{align}
Furthermore, combing \eqref{eq:e7_error} and \eqref{eq:error_e3}, we also have
\begin{align}
\|e_5\|_{L^2(\DOmega_h)} \leq C(\beta_6,\gamma)(\|\Bu\|_{\BH^{k+1}(D_H)}+\|p\|_{H^k(D_H)}).
\end{align}

Next, we introduce an auxiliary dual problem
\begin{subequations}
\begin{align}
-R_e^{-1}\Delta\Bw+\nabla\psi=\theta,\;\;&\textrm{in}\;\;\DOmega_h\\
\Div\Bw=0,\;\;&\textrm{in}\;\;\DOmega_h
\end{align}
\end{subequations}
with the homogeneous Dirichlet boundary condition $\Bw=0$ on $\partial\DOmega_h$. This problem admits the regularity estimate $\|\Bw\|_{\BH^2(\DOmega_h)}+\|\psi\|_{H^1(\DOmega_h)}\leq\|\theta\|_{\BL^2(\DOmega_h)}$.
Setting $\theta=\Be_3$ and taking $L^2$-inner product with $\Be_3$ leads to
\begin{align}
&\|\Be_3\|_{\BL^2(\DOmega_h)}^2\nn\\
= &R_e^{-1}(\nabla\Bw,\nabla\Be_3)_h - (\psi,\Div\Be_3)_h\nn\\
= &R_e^{-1}(\nabla(\Bw-\Bw_h),\nabla\Be_3)_h - (\psi-\psi_h,\Div\Be_3)_h\nn\\
\leq &R_e^{-1}\|\nabla(\Bw-\Bw_h)\|_{\BL^2(\DOmega_h)}\|\nabla\Be_3\|_{\BL^2(\DOmega_h)}
+ \|\psi-\psi_h\|_{L^2(\DOmega_h)}\|\Div\Be_3\|_{L^2(\DOmega_h)}\nn\\
\leq &\max\{1,\gamma^{-1}\}|||\Be_3|||_{\Cd}(h\|\Bw\|_{\BH^2(\DOmega_h)} + h\|\psi\|_{H^1(\DOmega_h)})\nn\\
\leq &\max\{1,\gamma^{-1}\}h|||\Be_3|||_{\Cd}\|\Be_3\|_{\BL^2(\DOmega_h)}.
\end{align}
Combining \eqref{eq:error_e3} yields
\begin{align*}
\|\Be_3\|_{\BL^2(\DOmega_h)}\leq C(\beta_6,\gamma)h^{k+1}(\|\Bu\|_{\BH^{k+1}(D_H)}+\|p\|_{H^k(D_H)}).
\end{align*}
Applying Lemma \ref{lem:u-u_sharp} and Lemma \ref{lem:proj_u},
the proof is completed.
\qed
\end{proof}
\vspace{5mm}

\begin{lemma}
For the mixed Poisson projection \eqref{proj_mix_poisson}, the following estimate holds
\begin{align}\label{eq:poisson_proj_prop}
\|\BJ-\breve{\BJ}_h\|_{\BL^2(\DOmega_h)} + h\|\phi-\breve{\phi}_h\|_{L^2(\DOmega_h)}
\lesssim h^k(\|\BJ\|_{\BH^k(D_H)}+\|\phi\|_{H^{k-1}(D_H)})
\end{align}
\end{lemma}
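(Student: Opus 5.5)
The plan mirrors the Stokes projection lemma: split errors via the interpolants of Lemma \ref{lem:proj_J} and Lemma \ref{lem:proj_phi}, then use the discrete inf-sup condition of Lemma \ref{lem:infsup_Jphi_omega_h}. Write
\begin{align*}
\BJ-\breve{\BJ}_h = (\BJ-\Cr_d\BJ) - (\breve{\BJ}_h-\Cr_d\BJ) := \Be_J - \Be_{J,h},\qquad
\phi-\breve{\phi}_h = (\phi-\Cp_\varphi\phi) - (\breve{\phi}_h-\Cp_\varphi\phi) := e_\phi - e_{\phi,h}.
\end{align*}
Since $\Div\BJ=0$ on $D_H$ by the extension assumption, Lemma \ref{lem:proj_J} gives $\Cr_d\BJ\in\BD_h^{k-1}(\Div0)$ and $\Div\Be_J=0$. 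Then \eqref{proj_mix_poisson_2} gives $(\varphi_h,\Div\Be_{J,h})_h=0$ for all $\varphi_h\in S_h^{k-2}$. Arguing exactly as in Lemma \ref{lem:div_J0} with $\varphi_h=\det\bbJ_{\BM_h}\Div\Be_{J,h}\in S_h^{k-2}$, we get $\Div\Be_{J,h}=0$ pointwise, so $\Be_{J,h}\in\BD_h^{k-1}(\Div0)$.

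For the current density, take $\Bd_h=\Be_{J,h}$ in \eqref{proj_mix_poisson_1}. The pressure-like term $(\phi-\breve\phi_h,\Div\Be_{J,h})_h$ vanishes, leaving $(\BJ-\breve{\BJ}_h,\Be_{J,h})_h=0$. This gives $\|\Be_{J,h}\|_{\BL^2(\DOmega_h)}^2=(\Be_J,\Be_{J,h})_h$, hence $\|\Be_{J,h}\|_{\BL^2(\DOmega_h)}\le\|\Be_J\|_{\BL^2(\DOmega_h)}\lesssim h^k\|\BJ\|_{\BH^k(D_H)}$. The triangle inequality then yields the bound for $\|\BJ-\breve\BJ_h\|_{\BL^2(\DOmega_h)}$. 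Because $\Div(\BJ-\breve\BJ_h)=0$, this bound also controls the full $\BH(\Div)$ error.

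For the potential, the main difficulty is that \eqref{proj_mix_poisson_1} is stated only for divergence-free test functions, which carry no information about $\phi$. To obtain the factor $h\|\phi-\breve\phi_h\|$, I would read the projection as the standard mixed problem, with \eqref{proj_mix_poisson_1} holding for all $\Bd_h\in\BD_h^{k-1}$; this is the natural definition, and the solvability of the saddle point system via Lemma \ref{lem:infsup_Jphi_omega_h} requires it. Then for any $\Bd_h$,
\begin{align*}
(e_{\phi,h},\Div\Bd_h)_h=(e_\phi,\Div\Bd_h)_h-(\BJ-\breve{\BJ}_h,\Bd_h)_h .
\end{align*}
Dividing by $\|\Bd_h\|_{\BH(\Div,\DOmega_h)}$ and applying \eqref{infsup_dis_Jphi} gives
$\beta_6\|e_{\phi,h}\|_{L^2(\DOmega_h)}\le\|e_\phi\|_{L^2(\DOmega_h)}+\|\BJ-\breve\BJ_h\|_{\BL^2(\DOmega_h)}$. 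Lemma \ref{lem:proj_phi} bounds $\|e_\phi\|$ by $h^{k-1}\|\phi\|_{H^{k-1}(D_H)}$. Combining these, $h\|\phi-\breve\phi_h\|_{L^2(\DOmega_h)}\lesssim h^k(\|\BJ\|_{\BH^k(D_H)}+\|\phi\|_{H^{k-1}(D_H)})$, which completes the estimate.

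A refinement is possible: $(e_\phi,\Div\Bd_h)_h$ would vanish if $\Cp_\varphi$ were the $L^2$ projection with weight $\det\bbJ_{\BM_h}$, because $\det\bbJ_{\BM_h}\Div\Bd_h\in S_h^{k-2}$. This is not needed at the stated order, but it shows the geometry contributes no loss.
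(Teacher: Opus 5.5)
Your proof is correct and follows the paper's argument: the same splitting through $\Cr_d\BJ$ and $\Cp_\varphi\phi$, the same test function $\breve{\BJ}_h-\Cr_d\BJ$ giving $\|\breve{\BJ}_h-\Cr_d\BJ\|_{\BL^2(\DOmega_h)}\le\|\BJ-\Cr_d\BJ\|_{\BL^2(\DOmega_h)}$, and the same discrete inf-sup step for $\breve{\phi}_h-\Cp_\varphi\phi$. Reading \eqref{proj_mix_poisson_1} over all of $\BD_h^{k-1}$, and checking that $\breve{\BJ}_h-\Cr_d\BJ$ is divergence-free, is exactly what the paper's proof tacitly needs, because with only divergence-free test functions its inf-sup step is vacuous; the one slip is in your optional aside, where the weight that makes $(e_\phi,\Div\Bd_h)_h$ vanish is $(\det\bbJ_{\BM_h})^{-1}$ (equivalently, the unweighted projection on $\wh\DOmega_h$), not $\det\bbJ_{\BM_h}$.
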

\begin{proof}
Substituting $\Bd_h$ in \eqref{proj_mix_poisson_1} with $\breve{\BJ}_h-\Cr_d\BJ$ gives
\begin{align}
\|\breve{\BJ}_h-\Cr_d\BJ\|_{\BL^2(\DOmega_h)}^2 &=(\BJ-\Cr_d\BJ,\breve{\BJ}_h-\Cr_d\BJ)\nn\\
&\leq \|\BJ-\Cr_d\BJ\|_{\BL^2(\DOmega_h)}\|\breve{\BJ}_h-\Cr_d\BJ\|_{\BL^2(\DOmega_h)},
\end{align}
which means $\|\Cr_d\BJ-\breve{\BJ}_h\|_{\BL^2(\DOmega_h)} \leq \|\BJ-\Cr_d\BJ\|_{\BL^2(\DOmega_h)}$.
Using triangle inequality and Lemma \ref{lem:proj_J}, we obtain
\begin{align}\label{lem:poisson_proj_J}
\|\BJ-\breve{\BJ}_h\|_{\BL^2(\DOmega_h)}
\leq &\|\BJ-\Cr_d\BJ\|_{\BL^2(\DOmega_h)}
+ \|\Cr_d\BJ-\breve{\BJ}_h\|_{\BL^2(\DOmega_h)}\nn\\
\leq &2\|\BJ-\Cr_d\BJ\|_{\BL^2(\DOmega_h)}
\lesssim h^k\|\BJ\|_{\BH^k(D_H)}.
\end{align}
Using the inf-sup condition, for any $\Bd_h\in\BD_h^{k-1}(\Div 0)$, we have
\begin{align*}
\|\breve{\phi}_h-\Cp_\varphi\phi\|_{L^2(\DOmega_h)}
\leq &\frac{(\breve{\phi}_h-\Cp_\varphi\phi,\Div\Bd_h)_h}{\|\Bd_h\|_{\BH(\Div,\DOmega_h)}}\nn\\
\lesssim &\frac{(\BJ-\breve{\BJ}_h,\Bd_h)_h-(\phi-\Cp_\varphi\phi,\Div\Bd_h)_h}{\|\Bd\|_{\BH(\Div,\DOmega_h)}}\nn\\
\lesssim &\|\BJ-\breve{\BJ}_h\|_{\BL^2(\DOmega_h)}+\|\phi-\Cp_\varphi\phi\|_{L^2(\DOmega_h)}.
\end{align*}
Consequently, using the result of Lemma \ref{lem:proj_phi} gives
\begin{align}\label{lem:poisson_proj_phi}
\|\phi-\breve{\phi}_h\|_{L^2(\DOmega_h)}
\leq&\|\phi-\Cp_\varphi\phi\|_{L^2(\DOmega_h)}+\|\BJ-\breve{\BJ}_h\|_{\BL^2(\DOmega_h)}\nn\\
\lesssim& h^{k-1}(\|\BJ\|_{\BH^k(D_H)}+\|\phi\|_{H^{k-1}(D_H)}).
\end{align}
Combining \eqref{lem:poisson_proj_J} and \eqref{lem:poisson_proj_phi} yields the estimate.
\qed
\end{proof}
\vspace{5mm}

Based on the above lemmas, we will present the main results of this paper.
\begin{theorem}\label{the:main_error}
Assume that the extended exact solution $(\Bu,p,\BJ,\phi)\in\BH^{k+1}(D_H)\times H^k(D_H)\times\BH^k(D_H)\times H^{k-1}(D_H)$ and that $\BB\in\BL^{\infty}(D_H)$,
the numerical solution $(\Bu_h,p_h,\BJ_h,\phi_h)$ of \eqref{eq:model_8st-0} satisfies
\begin{align*}
&|||(\Bu-\Bu_h,\BJ-\BJ_h)|||_{*}+\|p-p_h\|_{L^2(\DOmega_h)}+h\|\phi-\phi_h\|_{L^2(\DOmega_h)}\nn\\
\leq& {C(R_e,\alpha,\gamma,\|\BB\|_{\BL^\infty(D_H)})}h^k\big(\|\Bu\|_{\BH^{k+1}(D_H)}+\|\BJ\|_{\BH^{k}(D_H)}\nn\\
&+\|p\|_{H^k(D_H)}+\|\phi\|_{H^{k-1}(D_H)}+|||(\Bf,\Bg)|||_{1}\big),
\end{align*}
where $$|||(\Bu,\BJ)|||_{*}^2:=|||\Bu|||_{\Cd}^2+\alpha\|\BJ\|_{\BL^2(\DOmega_h)}^2$$
Note that the finite element order for $\BJ_h$
is $k-1$, thus the $\BL^2$ error of current density $\BJ$ is optimal.
\end{theorem}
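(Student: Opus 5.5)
The plan is to split each error into a projection part and a discrete part, using the Stokes projection \eqref{eq:stoke_proj} for $(\Bu,p)$ and the mixed Poisson projection \eqref{proj_mix_poisson} for $(\BJ,\phi)$. Write
\[
\Bu-\Bu_h=(\Bu-\breve{\Bu}_h)+(\breve{\Bu}_h-\Bu_h)=:\Beta_u+\Bxi_u,\qquad p-p_h=:\eta_p+\xi_p,
\]
and analogously $\BJ-\BJ_h=:\Beta_J+\Bxi_J$, $\phi-\phi_h=:\eta_\phi+\xi_\phi$. The $\eta$-parts are already controlled: \eqref{eq:stoke_proj_prop} gives the velocity and pressure terms, and \eqref{eq:poisson_proj_prop} gives $h^k$ for $\BJ$ and $h^{k-1}$ for $\phi$. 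The factor $h$ in front of $\|\phi-\phi_h\|$ in the statement reflects exactly this loss. It therefore remains to bound the $\xi$-parts by $h^k$ times data.

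\textbf{Error equations.} I would derive these by inserting the extended exact solution into the discrete forms on $\DOmega_h$. Because $\Bf,\Bg$ are extended through the PDE \eqref{eq:model_2-0st} and $\Div\Bu=\Div\BJ=0$ on $D_H$, the extended solution satisfies the strong equations on $\DOmega_h$. Integrating by parts on $\DOmega_h$ then produces consistency residuals of two kinds:
\begin{itemize}
\item[(a)] boundary terms, since $\Bu\neq0$ and $\phi\neq0$ on $\partial\DOmega_h$;
\item[(b)] data mismatch terms, since the scheme uses $\Bf_h=\Bf\circ\Phi_h^{-1}$ and $\Bg_h=\Bg\circ\Phi_h^{-1}$ rather than the extensions.
\end{itemize}
Type (b) is $O(h^{k+1})|||(\Bf,\Bg)|||_{1}$, by the argument of Lemma \ref{lem:u-u_sharp} with the $\BW^{1,\infty}$ data; this is where $|||(\Bf,\Bg)|||_1$ enters. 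For type (a), I would replace $\Bu$ by $\Bu^\sharp$, which vanishes on $\partial\DOmega_h$, and use Lemma \ref{lem:u-u_sharp} to pay only $h^k$; Lemma \ref{lem:phi-phi_sharp} handles $\phi$ in the same way.

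Subtracting, and using the defining relations of the two projections, the linear Stokes and Darcy parts cancel against $\Beta$. What remains is: for all test functions $(\Bv_h,\Bd_h)$,
\[
\Ca(\Bxi_u,\Bxi_J;\Bv_h,\Bd_h)_h+\gamma(\Div\Bxi_u,\Div\Bv_h)_h-\Cb(\xi_p,\xi_\phi;\Bv_h,\Bd_h)_h
\]
equals a right-hand side made of the coupling terms $\alpha(\Beta_J\times\BB,\Bv_h)_h$ and $\alpha(\Beta_u\times\BB,\Bd_h)_h$, the convection difference $\Co(\Bu;\Bu,\Bv_h)_h-\Co(\Bu_h;\Bu_h,\Bv_h)_h$, and the $O(h^k)$ consistency functionals.

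\textbf{Energy estimate.} Test with $\Bv_h=\Bxi_u$, $\Bd_h=\alpha\Bxi_J$, $q_h=\xi_p$, $\varphi_h=\xi_\phi$. The $\Cb$ terms cancel because of the discrete divergence constraints: \eqref{eq:model_8st:2} for the discrete solution and the second equations of \eqref{eq:stoke_proj} and \eqref{proj_mix_poisson}. Lemma \ref{lem:div_J0} and the identity $\Div\Cr_d\BJ=0$ help here. The cross terms $\BB$-terms cancel exactly as in \eqref{eq:lemma_coer_con}, which leaves $|||\Bxi_u|||_{\Cd}^2+\alpha\|\Bxi_J\|^2$ on the left.

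For the convection term, split
\[
\Co(\Bu;\Bu,\Bxi_u)-\Co(\Bu_h;\Bu_h,\Bxi_u)=\Co(\Bu-\Bu_h;\Bu,\Bxi_u)+\Co(\Bu_h;\Beta_u,\Bxi_u),
\]
using $\Co(\Bu_h;\Bxi_u,\Bxi_u)=0$ from \eqref{eq:lemma_O_pro}. The piece $\Co(\Bxi_u;\Bu,\Bxi_u)$ is bounded by $C_{\Co}\|\Bu\|_{\BH^1}\|\Bxi_u\|_{\BH^1}^2$. I would absorb it using the small-data condition \eqref{div0_unique_dis} together with the stability bound of Theorem \ref{thm_div0_exist_dis}; this step needs care to keep constants uniform in $h$ on $\DOmega_h$. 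The remaining pieces are handled by Young's inequality, which gives $|||(\Bxi_u,\Bxi_J)|||_*\lesssim h^k(\cdots)$.

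\textbf{Pressure and potential, then triangle inequality.} Next apply the discrete inf-sup conditions: Lemma \ref{lem:infsup_up_omega_h} for $\xi_p$ and Lemma \ref{lem:infsup_Jphi_omega_h} for $\xi_\phi$, each inserted into the error equation. This bounds $\|\xi_p\|$ and $\|\xi_\phi\|$ by the energy error plus $O(h^k)$. Summing with the projection bounds via the triangle inequality completes the proof.

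The main obstacle is the consistency step. The boundary residuals and domain mismatch must be shown to be $O(h^k)$ in the dual norm; this relies on the $h^{k+1}$ boundary distance from \eqref{pro_Phi} and trace estimates on strips of width $h^{k+1}$. Absorbing the nonlinear term under only the discrete small-data hypothesis is the second delicate point.
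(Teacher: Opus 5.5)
Your proposal is correct and follows the paper's proof essentially step for step: Stokes and mixed Poisson projection splitting, the boundary residual $\alpha\langle\phi,\Bn\cdot\Bd_h\rangle_{\partial\DOmega_h}$ reduced to $\phi-\phi^\sharp$ and Lemma~\ref{lem:phi-phi_sharp}, an $O(h^{k+1})$ data mismatch, an energy test in which the convection term is absorbed under a small-data assumption (which the paper also uses, only implicitly), and the discrete inf-sup conditions for $p$ and $\phi$. Two bookkeeping fixes are needed: in the $\Ca$-scaled error equation test with $\Bd_h=\boldsymbol{\xi}_J$ rather than $\alpha\boldsymbol{\xi}_J$ (otherwise the $\BB$-terms cancel only for $\alpha=1$); and since Theorem~\ref{thm_div0_exist_dis} bounds $\Bu_h$ rather than the extended $\Bu$, split the convection difference as $\Co(\Bu;\Bu-\breve{\Bu}_h,\boldsymbol{\xi}_u)_h+\Co(\Bu-\breve{\Bu}_h;\Bu_h,\boldsymbol{\xi}_u)_h+\Co(\boldsymbol{\xi}_u;\Bu_h,\boldsymbol{\xi}_u)_h$, so that the term to be absorbed involves $\Bu_h$.
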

\begin{proof}
By restricting \eqref{eq:model_2st} on $\DOmega_h$ , we have
\begin{subequations}\label{eq:err_1st}
\begin{align}
&\Ca(\Bu,\BJ;\Bv_h,\Bd_h)_h + \gamma(\Div\Bu,\Div\Bv_h)_h+ \Co(\Bu;\Bu,\Bv_h)_h
- \Cb(p,\phi;\Bv_h,\Bd_h)_h\nn\\
&+ \alpha\langle\phi,\Bn\cdot\Bd_h\rangle_{\partial\DOmega_h} = \Cl(\Bf,\Bg,\Bv_h,\Bd_h)_h,\\
&\Cb(q_h,\varphi_h;\Bu,\BJ)_h = 0.
\end{align}
\end{subequations}
for any $(\Bv_h,q_h,\Bd_h,\varphi_h) \in \BV_{h,0}^k\times Q_h^{k-1}\times\BD_h^{k-1}\times S_h^{k-2}$.
Note that $\phi$ is nonzero on $\partial\DOmega_h$, even though it vanishes on $\partial\DOmega$.

Subtracting \eqref{eq:model_8st} from \eqref{eq:err_1st}, we have the error equation
\begin{subequations}\label{eq:err_2st}
\begin{align}
&\Ca(\Bu-\Bu_h,\BJ-\BJ_h;\Bv_h,\Bd_h)_h + \gamma(\Div(\Bu-\Bu_h),\Div\Bv_h)_h + \Co(\Bu;\Bu,\Bv_h)_h\nn\\
&-\Co(\Bu_h;\Bu_h,\Bv_h)_h - \Cb(p-p_h,\phi-\phi_h;\Bv_h,\Bd_h)_h + \alpha\langle\phi,\Bn\cdot\Bd_h\rangle_{\partial\DOmega_h}\nn\\
&= \Cl(\Bf-\Bf_h,\Bg-\Bg_h,\Bv_h,\Bd_h)_h,\\
&\Cb(q_h,\varphi_h;\Bu-\Bu_h,\BJ-\BJ_h)_h = 0.
\end{align}
\end{subequations}
Setting
\begin{align*}
&\Bu-\Bu_h = \Bu-\breve{\Bu}_h + (\breve{\Bu}_h - \Bu_h) = \Be_\Bu + \eta_\Bu,\\
&p - p_h = p - \breve{p}_h + (\breve{p}_h - p_h) = e_p + \eta_p,\\
&\BJ-\BJ_h = \BJ-\breve{\BJ}_h + (\breve{\BJ}_h - \BJ_h) = \Be_\BJ + \eta_\BJ,\\
&\phi - \phi_h = \phi - \breve{\phi}_h + (\breve{\phi}_h - \phi_h) = e_\phi + \eta_\phi.
\end{align*}
With the Stokes projection \eqref{eq:stoke_proj} and the mixed Poisson projection \eqref{proj_mix_poisson},
setting $(\Bv_h,q_h,\Bd_h,\varphi_h)=(\eta_\Bu,\eta_p,\eta_\BJ,\eta_\phi)$, \eqref{eq:err_2st} can be rewritten as
\begin{align}\label{eq:err_3st}
& R_e^{-1}(\nabla\eta_\Bu,\nabla\eta_\Bu)_h+\gamma(\Div\eta_\Bu,\Div\eta_\Bu)_h + \alpha(\eta_\BJ,\eta_\BJ)_h\nn\\
= &\Cl(\Bf-\Bf_h,\Bg-\Bg_h,\eta_\Bu,\eta_\BJ)_h + \Co(\Bu_h;\Bu_h,\eta_\Bu)_h - \Co(\Bu;\Bu,\eta_\Bu)_h\nn\\
&+ \alpha\langle\phi,\Bn\cdot\eta_\BJ\rangle_{\partial\DOmega_h}
+ \alpha(\Be_\BJ\times\BB,\eta_\Bu)_h + \alpha(\Be_\Bu\times\BB,\eta_\BJ)_h.
\end{align}

Using the property \eqref{pro_Phi} and Young's inequality, the source term becomes
\begin{align}\label{eq:err_3st_r1}
&\Cl(\Bf-\Bf_h,\Bg-\Bg_h;\eta_\Bu,\eta_\BJ)_h\nn\\
\leq &\|\Bf-\Bf\circ\Phi_h^{-1}\|_{\BL^2(\DOmega_h)}\|\eta_\Bu\|_{\BL^2(\DOmega_h)} + \alpha\|\Bg-\Bg\circ\Phi_h^{-1}\|_{\BL^2(\DOmega_h)}\|\eta_\BJ\|_{\BL^2(\DOmega_h)}\nn\\
\leq &h^{k+1}\|\Bf\|_{\BW^{1,\infty}(\DOmega_h)}\|\nabla\eta_\Bu\|_{\BL^2(\DOmega_h)} + \alpha h^{k+1}\|\Bg\|_{\BW^{1,\infty}(\DOmega_h)}\|\eta_\BJ\|_{\BL^2(\DOmega_h)}\nn\\
\leq &C(R_e,\alpha,\varepsilon)h^{2k+2}|||(\Bf,\Bg)|||_{1}^2+\varepsilon R_e|||(\eta_\Bu,\eta_\BJ)|||_{*}^2
\end{align}
where $|||(\Bf,\Bg)|||_{1}^2:=\|\Bf\|_{\BW^{1,\infty}(D_H)}^2+\|\Bg\|_{\BW^{1,\infty}(D_H)}^2$.

Applying embedding $\BH_0^1(\DOmega_h)\hookrightarrow\BL^6(\DOmega_h)$ and the stability bound of $|||(\Bu,\BJ)|||_{\Ca}$, $|||(\Bu_h,\BJ_h)|||_{\Ca_h}$, we have
\begin{align}\label{eq:err_3st_r2}
&\Co(\Bu_h;\Bu_h,\eta_\Bu)_h-\Co(\Bu;\Bu,\eta_\Bu)_h\nn\\
= &\Co(\Bu_h-\Bu;\Bu,\eta_\Bu)_h+\Co(\Bu_h;\Bu_h-\Bu,\eta_\Bu)_h\nn\\
= &\Co(-\Be_\Bu;\Bu,\eta_\Bu)_h+\Co(-\eta_\Bu;\Bu,\eta_\Bu)_h+\Co(\Bu_h;-\Be_\Bu,\eta_\Bu)_h\nn\\
\leq &\frac{1}{2}\|\Be_\Bu\|_{\BL^6(\DOmega_h)}\|\nabla\Bu\|_{\BL^2(\DOmega_h)}\|\eta_\Bu\|_{\BL^3(\DOmega_h)}
+ \frac{1}{2}\|\Be_\Bu\|_{\BL^6(\DOmega_h)}\|\nabla\eta_\Bu\|_{\BL^2(\DOmega_h)}\|\Bu\|_{\BL^3(\DOmega_h)}\nn\\
+ &\frac{1}{2}\|\eta_\Bu\|_{\BL^6(\DOmega_h)}\|\nabla\Bu\|_{\BL^2(\DOmega_h)}\|\eta_\Bu\|_{\BL^3(\DOmega_h)}
+ \frac{1}{2}\|\eta_\Bu\|_{\BL^6(\DOmega_h)}\|\nabla\eta_\Bu\|_{\BL^2(\DOmega_h)}\|\Bu\|_{\BL^3(\DOmega_h)}\nn\\
+ &\frac{1}{2}\|\Bu_h\|_{\BL^6(\DOmega_h)}\|\nabla\Be_\Bu\|_{\BL^2(\DOmega_h)}\|\eta_\Bu\|_{\BL^3(\DOmega_h)}
+ \frac{1}{2}\|\Bu_h\|_{\BL^6(\DOmega_h)}\|\nabla\eta_\Bu\|_{\BL^2(\DOmega_h)}\|\Be_\Bu\|_{\BL^3(\DOmega_h)}\nn\\
\lesssim &\|\nabla\eta_\Bu\|_{\BL^2(\DOmega_h)}\|\nabla\Be_\Bu\|_{\BL^2(\DOmega_h)}
(\|\nabla\Bu\|_{\BL^2(\DOmega_h)}+\|\nabla\Bu_h\|_{\BL^2(\DOmega_h)})\nn\\
& +\|\nabla\eta_\Bu\|_{\BL^2(\DOmega_h)}^2\|\nabla\Bu\|_{\BL^2(\DOmega_h)}\nn\\
\lesssim &C(R_e,\alpha,\varepsilon)\|\nabla\Be_\Bu\|_{\BL^2(\DOmega_h)}^2
+ \varepsilon R_e|||(\eta_\Bu,\eta_\BJ)|||_{*}^2.
\end{align}
Applying the Green's formula and Cauchy-Schwarz inequality, we obtain
\begin{align}\label{eq:err_3st_r4}
&\alpha\langle \phi,\Bn\cdot\eta_\BJ\rangle_{\partial\DOmega_h}
= \alpha\langle \phi-\phi^\sharp,\Bn\cdot\eta_\BJ\rangle_{\partial\DOmega_h}\nn\\
= &\alpha(\nabla(\phi-\phi^\sharp),\eta_\BJ)_h+\alpha(\phi-\phi^\sharp,\Div\eta_\BJ)_h
\lesssim \alpha\|\phi-\phi^\sharp\|_{H^1(\DOmega_h)}\|\eta_\BJ\|_{\BL^2(\DOmega_h)}\nn\\
\lesssim& C(\alpha,\varepsilon) \|\phi-\phi^\sharp\|_{H^1(\DOmega_h)}^2+\varepsilon|||(\eta_\Bu,\eta_\BJ)|||_{*}^2.
\end{align}
Using the Young's inequality gives
{\begin{align}\label{eq:err_3st_r3}
&\alpha(\Be_\BJ\times\BB,\eta_\Bu)_h+\alpha(\Be_\Bu\times\BB,\eta_\BJ)_h\nn\\
\leq&\alpha\|\BB\|_{\BL^\infty(\DOmega_h)} (\|\Be_\BJ\|_{\BL^2(\DOmega_h)}\|\eta_\Bu\|_{\BL^2(\DOmega_h)}
+ \|\Be_\Bu\|_{\BL^2(\DOmega_h)}\|\eta_\BJ\|_{\BL^2(\DOmega_h)})\nn\\
\leq& C(R_e,\alpha,\varepsilon,\|\BB\|_{\BL^\infty(\DOmega_h)})(\|\Be_\BJ\|_{\BL^2(\DOmega_h)}^2\nn\\
&+\|\Be_\Bu\|_{\BL^2(\DOmega_h)}^2)
+ \varepsilon R_e|||(\eta_\Bu,\eta_\BJ)|||_{*}^2
\end{align}
Combing \eqref{eq:err_3st}-\eqref{eq:err_3st_r3} and Young's inequality yields
\begin{align}
&(1-3\varepsilon R_e-\varepsilon)|||(\eta_\Bu,\eta_\BJ)|||_{*}^2\nn\\
\lesssim &C(R_e,\alpha,\varepsilon)h^{2k+2}|||(\Bf,\Bg)|||_{1}^2
+ C(R_e,\alpha,\varepsilon)\|\nabla\Be_\Bu\|_{\BL^2(\DOmega_h)}^2+C(\alpha,\varepsilon) \|\phi-\phi^\sharp\|_{H^1(\DOmega_h)}^2\nn\\
&
+ C(R_e,\alpha,\varepsilon,\|\BB\|_{\BL^\infty(\DOmega_h)})(\|\Be_\BJ\|_{\BL^2(\DOmega_h)}^2 + \|\Be_\Bu\|_{\BL^2(\DOmega_h)}^2)
\end{align}
If $\varepsilon<\frac{1}{3R_e+1}$, it follows from Lemma \ref{lem:phi-phi_sharp} that
\begin{align}
&|||(\eta_\Bu,\eta_\BJ)|||_{*}\nn\\
\lesssim &C(R_e,\alpha,\|\BB\|_{\BL^\infty(\DOmega_h)})(h^{k+1}|||(\Bf,\Bg)|||_{1}+\|\nabla\Be_\Bu\|_{\BL^2(\DOmega_h)}
+ \|\phi-\phi^\sharp\|_{H^1(\DOmega_h)}\nn\\
&+ \|\Be_\BJ\|_{\BL^2(\DOmega_h)})\nn\\
\lesssim &C(R_e,\alpha,\|\BB\|_{\BL^\infty(D_H)})h^k(|||(\Bf,\Bg)|||_{1}+\|\Bu\|_{\BH^{k+1}(D_H)}+\|\BJ\|_{\BH^{k}(D_H)}\nn\\
&+ \|p\|_{H^{k}(D_H)}+\|\phi\|_{H^2(D_H)}).
\end{align}
Combining \eqref{eq:stoke_proj_prop}, \eqref{eq:poisson_proj_prop} and triangle inequality, we obtain
\begin{align}\label{eq_error_uJ}
&|||(\Bu-\Bu_h,\BJ-\BJ_h)|||_{*}\nn\\
&\lesssim C(R_e,\alpha,\gamma,\|\BB\|_{\BL^\infty(D_H)})h^k(|||(\Bf,\Bg)|||_{1}+\|\Bu\|_{\BH^{k+1}(D_H)}+\|\BJ\|_{\BH^{k}(D_H)}\nn\\
&+\|p\|_{H^{k}(D_H)}+\|\phi\|_{H^2(D_H)}).
\end{align}

By the inf-sup condition \eqref{infsup_dis_up}, using Lemma \ref{lem:proj_p} and \eqref{eq_error_uJ} gives
\begin{align}
&\|\eta_p\|_{L^2(\DOmega_h)}\nn\\
\lesssim &\sup_{\Bv_h\in\BV_{h,0}^k}\frac{(\eta_p,\Div\Bv_h)_h}{\|\Bv_h\|_{\BH^1(\DOmega_h)}}
\lesssim \sup_{\Bv_h\in\BV_{h,0}^k}\frac{(-e_p+p-p_h,\Div\Bv_h)_h}{\|\Bv_h\|_{\BH^1(\DOmega_h)}}\nn\\
\lesssim &\|e_p\|_{L^2(\DOmega_h)}
+ \frac{\Co(\Bu-\Bu_h;\Bu,\Bv_h)_h+ \Co(\Bu_h;\Bu-\Bu_h,\Bv_h)_h}{\|\Bv_h\|_{\BH^1(\DOmega_h)}}\nn\\
&+ \frac{ R_e^{-1}(\nabla(\Bu-\Bu_h),\nabla\Bv_h)_h+\gamma(\Div(\Bu-\Bu_h),\Div\Bv_h)_h}{\|\Bv_h\|_{\BH^1(\DOmega_h)}}\nn\\
&+ \frac{\alpha((\BJ-\BJ_h)\times\BB,\Bv_h)_h+(\Bf-\Bf_h,\Bv_h)_h}{\|\Bv_h\|_{\BH^1(\DOmega_h)}}\nn\\
\lesssim &\|e_p\|_{L^2(\DOmega_h)}+|||(\Bu-\Bu_h,\BJ-\BJ_h)|||_{*}+h^{k+1}\|\Bf\|_{\BW^{1,\infty}(\DOmega_h)}\nn\\
\lesssim & C(R_e,\alpha,\gamma,\|\BB\|_{\BL^\infty(D_H)})h^k\big(|||(\Bf,\Bg)|||_{1}+\|\Bu\|_{\BH^{k+1}(D_H)}+\|\BJ\|_{\BH^{k}(D_H)}\nn\\
& +\|p\|_{H^k(D_H)}+\|\phi\|_{H^2(D_H)}\big).
\end{align}
Similarly, based on \eqref{infsup_dis_Jphi}, \eqref{eq:lem_proj_phi} and \eqref{eq_error_uJ}, we obtain
\begin{align}
\|\eta_\phi\|_{L^2(\DOmega_h)}
\lesssim &\sup_{\Bd_h\in\BD_h^{k-1}}\frac{(-e_\phi+\phi-\phi_h,\Div\Bd_h)_h}{\|\Bd_h\|_{\BH(\Div,\DOmega_h)}}\nn\\
\lesssim &\|e_\phi\|_{L^2(\DOmega_h)}+\frac{(\BJ-\BJ_h,\Bd_h)_h+\langle \phi,\Bn\cdot\Bd_h\rangle_{\partial\DOmega_h}}{\|\Bd_h\|_{\BH(\Div,\DOmega_h)}}\nn\\
& +\frac{((\Bu-\Bu_h)\times\BB,\Bd_h)_h+(\Bg-\Bg_h,\Bd_h)_h}{\|\Bd_h\|_{\BH(\Div,\DOmega_h)}}\nn\\
\lesssim &\|e_\phi\|_{L^2(\DOmega_h)}+\|\BJ-\BJ_h\|_{\BL^2(\DOmega_h)}+\|\phi-\phi^\sharp\|_{H^1(\DOmega_h)}\nn\\
& +\|\Bu-\Bu_h\|_{\BH^1(\DOmega_h)}+h^{k+1}\|\Bg\|_{\BW^{1,\infty}(\DOmega_h)}\nn\\
\lesssim &C(R_e,\alpha,\gamma,\|\BB\|_{\BL^\infty(D_H)})h^{k-1}(|||(\Bf,\Bg)|||_{1}+\|\Bu\|_{\BH^{k+1}(D_H)}\nn\\
&+\|\BJ\|_{\BH^{k}(D_H)} +\|p\|_{H^k(D_H)}+\|\phi\|_{H^{k-1}(D_H)}).
\end{align}
}
Combing the triangle inequality completes the proof.
\qed
\end{proof}

\subsection{Optimal error estimate in the $\BL^2$-norm for velocity}
In Theorem \ref{the:main_error}, the optimal $\BL^2$ error estimate for $\BJ$ has been proven. In this subsection, we mainly prove the optimal $\BL^2$-norm error estimate for $\Bu$, which is precisely stated in the following theorem.
\begin{theorem}\label{the:main_error1}
Under the assumptions of Theorem \ref{the:main_error}, we have
\begin{align*}
\|\Bu-\Bu_h\|_{\BL^2(\DOmega_h)}
\leq& {C(R_e,\alpha,\gamma,\|\BB\|_{\BL^\infty(D_H)})}  h^{k+1}\big(|||(\Bf,\Bg)|||_{1}
+ \|\Bu\|_{\BH^{k+1}(D_H)}\nn\\
&+\|\BJ\|_{\BH^{k}(D_H)} + \|p\|_{H^k(D_H)}+\|\phi\|_{H^{k-1}(D_H)}\big).
\end{align*}
\end{theorem}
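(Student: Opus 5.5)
Using the splitting of Theorem \ref{the:main_error}, I write $\Bu-\Bu_h=\Be_\Bu+\eta_\Bu$ with $\Be_\Bu=\Bu-\breve{\Bu}_h$. By \eqref{eq:stoke_proj_prop}, $\|\Be_\Bu\|_{\BL^2(\DOmega_h)}\lesssim h^{k+1}(\|\Bu\|_{\BH^{k+1}(D_H)}+\|p\|_{H^k(D_H)})$. It therefore suffices to show $\|\eta_\Bu\|_{\BL^2(\DOmega_h)}\lesssim h^{k+1}(\cdots)$. For this I use an Aubin--Nitsche argument on $\DOmega_h$, built from a dual problem that is the adjoint of the linearized, coupled MHD operator at $(\Bu,\BJ)$.

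\textbf{Dual problem.} Find $(\Bw,\psi,\BZ,\chi)$ on $\DOmega_h$ with $\Bw=0$ and $\chi=0$ on $\partial\DOmega_h$ such that
\begin{align*}
&\Co(\Bu;\Bv,\Bw)+\Co(\Bv;\Bu,\Bw)+R_e^{-1}(\nabla\Bv,\nabla\Bw)+\gamma(\Div\Bv,\Div\Bw)-(\psi,\Div\Bv)-\alpha(\Bv\times\BB,\BZ)=(\eta_\Bu,\Bv),\\
&\alpha(\Bd,\BZ)-\alpha(\chi,\Div\Bd)-\alpha(\Bd\times\BB,\Bw)=0,\qquad \Div\Bw=0,\qquad \Div\BZ=0.
\end{align*}
I assume this problem is well posed under the small-data condition \eqref{div0_unique_dis}, and that it has the regularity
\[
\|\Bw\|_{\BH^2(\DOmega_h)}+\|\psi\|_{H^1(\DOmega_h)}+\|\BZ\|_{\BH^1(\DOmega_h)}+\|\chi\|_{H^2(\DOmega_h)}\lesssim\|\eta_\Bu\|_{\BL^2(\DOmega_h)}.
\]
This is the analogue of the Stokes regularity already used in the proof of \eqref{eq:stoke_proj_prop}.

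\textbf{Duality identity.} I test the dual problem with $(\Bv,\Bd)=(\eta_\Bu,\eta_\BJ)$ and subtract the discrete Galerkin relations, tested with $(\Cr_v\Bw,\Cp_q\psi,\Cr_d\BZ,\Cp_\varphi\chi)$. Writing the error equation \eqref{eq:err_2st} in terms of $\eta$ and the projection errors then gives
\[
\|\eta_\Bu\|^2_{\BL^2(\DOmega_h)}=\text{(I)}+\text{(II)}+\text{(III)}+\text{(IV)}.
\]
\begin{itemize}
\item[(I)] Approximation terms: energy errors times $\|\Bw-\Cr_v\Bw\|_{\BH^1}$, $\|\psi-\Cp_q\psi\|$, $\|\BZ-\Cr_d\BZ\|$ and so on. Each factor is $O(h)$ times the dual data norm. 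By Theorem \ref{the:main_error}, together with Lemmas \ref{lem:proj_u}, \ref{lem:proj_J}, \ref{lem:proj_p} at order one, each term is $O(h^{k+1})\|\eta_\Bu\|$.
\item[(II)] Nonlinear remainder $\Co(\Bu-\Bu_h;\Bu-\Bu_h,\Bw)$. This is bounded by the square of the energy error, hence $O(h^{2k})\le O(h^{k+1})$ for $k\ge1$.
\item[(III)] Projection-residual terms, such as $\alpha(\Be_\BJ\times\BB,\Bw)$ and $\alpha(\Be_\Bu\times\BB,\BZ)$. These must be shown to be of higher order. $\Be_\Bu$ is fine in $\BL^2$ by \eqref{eq:stoke_proj_prop}. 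For $\Be_\BJ$, I exploit that $\breve{\BJ}_h$ is the mixed Poisson projection and subtract $\Cr_d$ of the smooth test field, using the orthogonality \eqref{proj_mix_poisson_1}. This yields the extra power of $h$, which is the $H^{-1}$-type gain mentioned in the introduction.
\item[(IV)] Geometric consistency terms: $\Cl(\Bf-\Bf_h,\Bg-\Bg_h;\cdot)$ and the boundary term $\alpha\langle\phi-\phi^\sharp,\Bn\cdot\Cr_d\BZ\rangle_{\partial\DOmega_h}$. The source part is $O(h^{k+1})$ by \eqref{pro_Phi}.
\end{itemize}

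\textbf{Main obstacle.} The boundary term in (IV) and the current-density coupling in (III) are where care is needed. The estimate used in \eqref{eq:err_3st_r4} only gives $h^k$. To gain one more power of $h$, I do not pass through $H^1$. Instead I bound
\[
\|\phi-\phi^\sharp\|_{L^2(\partial\DOmega_h)}\lesssim h^{k+1}\|\phi\|_{W^{1,\infty}},
\]
using $\|\mathrm{Id}-\Phi_h^{-1}\|_{\BL^\infty}\lesssim h^{k+1}$, and combine it with a trace bound $\|\Bn\cdot\Cr_d\BZ\|_{L^2(\partial\DOmega_h)}\lesssim\|\BZ\|_{\BH^1}$. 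This requires $\phi$ slightly smoother than $H^{k-1}$; I would absorb that into the extension assumptions.

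\textbf{Conclusion.} Collecting (I)--(IV) gives
\[
\|\eta_\Bu\|^2_{\BL^2(\DOmega_h)}\lesssim h^{k+1}(\cdots)\,\|\eta_\Bu\|_{\BL^2(\DOmega_h)}.
\]
Dividing by $\|\eta_\Bu\|_{\BL^2(\DOmega_h)}$ and applying the triangle inequality with the bound on $\Be_\Bu$ completes the proof.
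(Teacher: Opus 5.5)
Your argument is a valid alternative under the regularity you assume, but it is not the paper's route. The paper never introduces an adjoint of the coupled linearized operator. Instead it proceeds in four steps:
\begin{itemize}
\item it tests the momentum error equation with $\eta_\Bu$;
\item it lets the Stokes projection \eqref{eq:stoke_proj} remove the viscous, grad-div and pressure contributions;
\item it integrates the convection difference by parts so that only $\|\Be_\Bu\|_{\BL^2(\DOmega_h)}$ appears;
\item it bounds the Lorentz term by $\|\BJ-\BJ_h\|_{\BH^{-1}(\DOmega_h)}\|\BB\times\eta_\Bu\|_{\BH^1(\DOmega_h)}$, with the $\BH^{-1}$ bound coming from a separate, decoupled dual problem $\Bj+\nabla\varphi=\chi$, $\Div\Bj=0$.
\end{itemize}
This yields the stronger supercloseness claim $|||\eta_\Bu|||_{\Cd}=O(h^{k+1})$, after which Poincar\'e gives the $\BL^2$ bound. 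It needs no regularity theory for a coupled adjoint MHD system.

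Its weak point is the assertion $(\BJ-\BJ_h,\Bd_h)_h=0$ for $\Bd_h\in\BD_h^{k-1}(\Div 0)$. The discrete Ohm's law only gives $(\BJ-\BJ_h,\Bd_h)_h=((\Bu-\Bu_h)\times\BB,\Bd_h)_h+(\Bg-\Bg_h,\Bd_h)_h-\langle\phi,\Bn\cdot\Bd_h\rangle_{\partial\DOmega_h}$ for such $\Bd_h$. Your coupled adjoint problem absorbs exactly this $\Bu$--$\BJ$ coupling and the boundary term. As a result you need only the energy bounds of Theorem \ref{the:main_error}, the $\BL^2$ bound in \eqref{eq:stoke_proj_prop}, and first-order interpolation of the dual solution.

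Your route has two costs:
\begin{itemize}
\item You assume uniform-in-$h$ well-posedness and $\BH^2\times H^1\times\BH^1$ regularity of the adjoint system on $\DOmega_h$. Note that ${\boldsymbol{Z}}=\BB\times\Bw-\nabla\chi\in\BH^1$ requires $\BB\in\BW^{1,\infty}$. The paper needs this too, for its bound $\|\BB\times\eta_\Bu\|_{\BH^1}\lesssim\|\BB\|_{\BL^\infty}\|\nabla\eta_\Bu\|$.
\item Your boundary estimate puts $\|\phi\|_{W^{1,\infty}(D_H)}$ on the right-hand side instead of the stated $\|\phi\|_{H^{k-1}(D_H)}$.
\end{itemize}

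One step in (III) should be stated differently. Orthogonality \eqref{proj_mix_poisson_1} holds only against $\BD_h^{k-1}(\Div 0)$, and $\BB\times\Bw$ is not divergence-free, so you cannot subtract $\Cr_d(\BB\times\Bw)$. You do not need to. Test the second dual equation with the divergence-free field $\Bd=\Be_\BJ$: this gives $\alpha(\Be_\BJ,{\boldsymbol{Z}})-\alpha(\Be_\BJ\times\BB,\Bw)=0$, so the $\Be_\BJ$ contributions cancel identically. Equivalently, test the dual problem with $\Bd=\BJ-\BJ_h$ and drop the mixed Poisson projection altogether.
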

\begin{proof}
For the extended true solutions, we have
\begin{align}\label{eq:model_9st}
&\Co(\Bu;\Bu,\Bv_h)_h + R_e^{-1}(\nabla\Bu,\nabla\Bv_h)_h + \gamma(\Div\Bu,\Div\Bv_h)_h\nn\\
&\quad - (p,\Div\Bv_h)_h- \alpha(\BJ\times\BB,\Bv_h)_h= (\Bf,\Bv_h)_h,
\end{align}
Subtracting \eqref{eq:model_8st-0:1} from \eqref{eq:model_9st}, we have the error equation
\begin{align}\label{eq_err_l2}
&\Co(\Bu;\Bu,\Bv_h)_h- \Co(\Bu_h;\Bu_h,\Bv_h)_h+ R_e^{-1}(\nabla(\Bu-\Bu_h),\nabla\Bv_h)_h\nn\\
&+ \gamma(\Div(\Bu-\Bu_h),\Div\Bv_h)_h- (p-p_h,\Div\Bv_h)_h- \alpha((\BJ-\BJ_h)\times\BB,\Bv_h)_h\nn\\
= &(\Bf-\Bf_h,\Bv_h)_h.
\end{align}
To simplify the writing, we set
\begin{align*}
&\Bu-\Bu_h = \Bu-\breve{\Bu}_h + (\breve{\Bu}_h - \Bu_h) = \Be_\Bu + \eta_\Bu,\\
&p - p_h = p - \breve{p}_h + (\breve{p}_h - p_h) = e_p + \eta_p.
\end{align*}
Take $(\Bv_h,q_h)= (\eta_\Bu,\eta_p)$, and substitute \eqref{eq:stoke_proj} into \eqref{eq_err_l2} to obtain
\begin{align}\label{eq:nabla_eta_u}
& R_e^{-1}(\nabla\eta_\Bu,\nabla\eta_\Bu)_h+\gamma(\Div\eta_\Bu,\Div\eta_\Bu)_h\nn\\
&\hspace{-0.6cm}=(\Bf-\Bf_h,\eta_\Bu)_h-\Co(\Bu;\Bu,\eta_\Bu)_h- \Co(\Bu_h;\Bu_h,\eta_\Bu)_h\nn\\
&\hspace{-0.6cm}\quad + \alpha((\BJ-\BJ_h)\times\BB,\eta_\Bu)_h.
\end{align}
Using the Young's inequality to the first term on the right-hand side leads to
\begin{align}\label{eq:l2error_right_f}
(\Bf-\Bf_h,\eta_\Bu)_h
= &\|\Bf-\Bf\circ\Phi_h^{-1}\|_{\BL^2(\DOmega_h)}\|\eta_\Bu\|_{\BL^2(\DOmega_h)} \nn\\
\leq &h^{k+1}\|\Bf\|_{\BW^{1,\infty}(\DOmega_h)}\|\nabla\eta_\Bu\|_{\BL^2(\DOmega_h)}\nn\\
\leq &\varepsilon^{-1}h^{2k+2}\|\Bf\|_{\BW^{1,\infty}(\DOmega_h)}^2
+ \varepsilon R_e|||\eta_\Bu|||_{\Cd}^2.
\end{align}
By applying \eqref{eq:tri_define} and embedding inequality, the trilinear term becomes
\begin{align}\label{eq:l2error_right_trilinear}
&\Co(\Bu_h;\Bu_h,\eta_\Bu)_h-\Co(\Bu;\Bu,\eta_\Bu)_h\nn\\
\leq &\frac{1}{2}\|\Be_\Bu\|_{\BL^2(\DOmega_h)}\|\nabla\Bu\|_{\BL^3(\DOmega_h)}\|\eta_\Bu\|_{\BL^6(\DOmega_h)}
+ \frac{1}{2}\|\Be_\Bu\|_{\BL^2(\DOmega_h)}\|\nabla\eta_\Bu\|_{\BL^2(\DOmega_h)}\|\Bu\|_{\BL^\infty(\DOmega_h)}\nn\\
& +\frac{1}{2}\|\eta_\Bu\|_{\BL^4(\DOmega_h)}\|\nabla\Bu\|_{\BL^2(\DOmega_h)}\|\eta_\Bu\|_{\BL^4(\DOmega_h)}
+ \frac{1}{2}\|\eta_\Bu\|_{\BL^4(\DOmega_h)}\|\nabla\eta_\Bu\|_{\BL^2(\DOmega_h)}\|\Bu\|_{\BL^4(\DOmega_h)}\nn\\
& +\|\Bu_h\|_{\BL^\infty(\DOmega_h)}\|\Be_\Bu\|_{\BL^2(\DOmega_h)}\|\nabla\eta_\Bu\|_{\BL^2(\DOmega_h)}
+ \frac{1}{2}\|\nabla\cdot\Bu_h\|_{\BL^3(\DOmega_h)}\|\Be_\Bu\|_{\BL^2(\DOmega_h)}\|\eta_\Bu\|_{\BL^6(\DOmega_h)}\nn\\
\lesssim &\big(\|\Be_\Bu\|_{\BL^2(\DOmega_h)}\|\nabla\Bu\|_{\BL^3(\DOmega_h)}\|\nabla\eta_\Bu\|_{\BL^2(\DOmega_h)}
+ \|\Be_\Bu\|_{\BL^2(\DOmega_h)}\|\nabla\eta_\Bu\|_{\BL^2(\DOmega_h)}\|\Bu\|_{\BL^\infty(\DOmega_h)}\nn\\
& +\|\nabla\eta_\Bu\|_{\BL^2(\DOmega_h)}^2\|\nabla\Bu\|_{\BL^2(\DOmega_h)}
+ \|\Bu_h\|_{\BL^\infty(\DOmega_h)}\|\Be_\Bu\|_{\BL^2(\DOmega_h)}\|\nabla\eta_\Bu\|_{\BL^2(\DOmega_h)}\nn\\
& +\|\nabla\Bu_h\|_{\BL^3(\DOmega_h)}\|\Be_\Bu\|_{\BL^2(\DOmega_h)}\|\nabla\eta_\Bu\|_{\BL^2(\DOmega_h)}\big)\nn\\
\lesssim &C(R_e,\alpha,\varepsilon)\|\Be_\Bu\|_{\BL^2(\DOmega_h)}^2+\varepsilon R_e|||\eta_\Bu|||_{\Cd}^2.
\end{align}

In order to obtain a more precise estimate, we introduce an auxiliary dual problem. Given $\chi\in\BH^1(\DOmega_h)$, find $(\Bj,\varphi)\in\BH^1(\DOmega_h)\times H^2(\DOmega_h)$ satisfying
\begin{subequations}\label{eq:newnew}
\begin{align}
\Bj + \nabla \varphi = \chi,\;\;&\textrm{in}\;\;\DOmega_h,\label{eq:auxi_j_1}\\
\Div\Bj = 0,\;\;&\textrm{in}\;\;\DOmega_h,\label{eq:auxi_j_2}\\
\varphi = 0,\;\;&\textrm{on}\;\;\partial\DOmega_h.
\end{align}
\end{subequations}
Taking the divergence of \eqref{eq:auxi_j_1} and using \eqref{eq:auxi_j_2}, we obtain
\begin{align*}
\Delta\varphi = \Div\chi,
\end{align*}
which implies the regularity
\begin{align*}
\|\Bj\|_{\BH^1(\DOmega_h)} \leq \|\nabla\varphi\|_{\BH^1(\DOmega_h)} + \|\chi\|_{\BH^1(\DOmega_h)}
\leq C\|\chi\|_{\BH^1(\DOmega_h)}.
\end{align*}
Let $\Bj_h$ be the finite element solution of $\Bj$ of \eqref{eq:newnew}  in the space $\BD_h^{k-1}(\Div 0)$  ($k \geq 2$).
Because $(\BJ-\BJ_h, \Bd_h)_h = 0$ for any $\Bd_h \in \BD_h^{k-1}(\Div 0)$.
By taking $\BL^2$-inner product of \eqref{eq:auxi_j_1} with $\BJ-\BJ_h$ gives
\begin{align}
&(\BJ-\BJ_h,\chi)_h = (\BJ-\BJ_h,\Bj)_h = (\BJ-\BJ_h,\Bj-\Bj_h)_h\nn\\
&\leq \|\BJ-\BJ_h\|_{\BL^2(\DOmega_h)}\|\Bj-\Bj_h\|_{\BL^2(\DOmega_h)} \lesssim h  \|\chi\|_{\BH^1(\DOmega_h)} \|\BJ-\BJ_h\|_{\BL^2(\Omega_h)}.
\end{align}
According to the Theorem \ref{the:main_error} and the definition of $\BH^{-1}$-norm,
\begin{align}
&\|\BJ-\BJ_h\|_{\BH^{-1}(\DOmega_h)}\nn\\
= &\sup_{\chi\in \BH(\Div,\DOmega_h)}\frac{(\BJ-\BJ_h,\chi)_h}{\|\chi\|_{\BH^1(\DOmega_h)}}
\lesssim h\|\BJ-\BJ_h\|_{\BL^2(\DOmega_h)}\nn\\
\leq & {C(R_e,\alpha,\gamma,\|\BB\|_{\BL^\infty(D_H)})}h^{k+1}\big(|||(\Bf,\Bg)|||_{1}+\|\Bu\|_{\BH^{k+1}(D_H)}+\|\BJ\|_{\BH^{k}(D_H)}\nn\\
& +\|p\|_{H^k(D_H)}+\|\phi\|_{H^2(D_H)}\big).
\end{align}
Using the H\"{o}lder's inequality and Young's inequality has
{\begin{align}\label{eq:l2error_right_times}
&((\BJ-\BJ_h)\times\BB,\eta_\Bu)_h\nn\\
\leq &\|\BJ-\BJ_h\|_{\BH^{-1}(\DOmega_h)}\|\BB\times\eta_\Bu\|_{\BH^1(\DOmega_h)}\nn\\
\lesssim &\|\BB\|_{\BL^\infty(\DOmega_h)}\|\BJ-\BJ_h\|_{\BH^{-1}(\DOmega_h)}\|\nabla\eta_\Bu\|_{\BL^2(\DOmega_h)}\nn\\
\lesssim &\varepsilon R_e|||\eta_\Bu|||_{\Cd}^2+\varepsilon^{-1}\|\BB\|_{\BL^\infty(\DOmega_h)}^2\|\BJ-\BJ_h\|_{\BH^{-1}(\DOmega_h)}^2.
\end{align}
Based on \eqref{eq:l2error_right_f}, \eqref{eq:l2error_right_trilinear} and \eqref{eq:l2error_right_times}, equation \eqref{eq:nabla_eta_u} can be rewritten as
\begin{align}
(1-3\varepsilon R_e)|||\eta_\Bu|||_{\Cd}^2\lesssim C(R_e,\alpha,\|\BB\|_{\BL^\infty(\DOmega_h)},\varepsilon)\Big(h^{2k+2}\|\Bf\|_{\BW^{1,\infty}(\DOmega_h)}^2\nn\\
+ \|\Be_\Bu\|_{\BL^2(\DOmega_h)}^2
 +\|\BJ-\BJ_h\|_{\BH^{-1}(\DOmega_h)}^2\Big).
\end{align}
If $\varepsilon<\frac{1}{3R_e}$, it is easy to obtain
\begin{align}
|||\eta_\Bu|||_{\Cd}
\leq &C(R_e,\alpha,\gamma,\|\BB\|_{\BL^\infty(D_H)})h^{k+1}\big(|||(\Bf,\Bg)|||_{1}+\|\Bu\|_{\BH^{k+1}(D_H)}\nn\\
&+\|\BJ\|_{\BH^{k}(D_H)} +\|p\|_{H^k(D_H)}+\|\phi\|_{H^{k-1}(D_H)}\big).
\end{align}
Using Poincar\'{e}'s inequality and the triangle inequality, we have
\begin{align}
\|\Bu-\Bu_h\|_{\BL^2(\DOmega_h)}
&\leq\|\Be_u\|_{\BL^2(\DOmega_h)}+R_e|||\eta_\Bu|||_{\Cd}\nn\\
& \leq C(R_e,\alpha,\gamma,\|\BB\|_{\BL^\infty(D_H)})h^{k+1}\big(|||(\Bf,\Bg)|||_{1}+\|\Bu\|_{\BH^{k+1}(D_H)}\nn\\
&+\|\BJ\|_{\BH^{k}(D_H)} +\|p\|_{H^k(D_H)}+\|\phi\|_{H^{k-1}(D_H)}\big).
\end{align}
}
The proof is completed.
\end{proof}

\section{Numerical experiments}
\label{sec:exp}
{In this section, we present numerical examples to validate the theoretical analysis. All computations are carried out using the PHG library \cite{ZhangPHG}. The nonlinear system is solved by the Picard iteration described in Appendix, with the linear systems solved using the augmented Lagrangian block preconditioner from \cite{lnz19x}. }
\begin{example}
The computational domain is given by an unit ball
\begin{align*}
\DOmega=\{\Bx\in\mathbb{R}^3:|\Bx|<1\}.
\end{align*}
Furthermore, for simplicity, we set
\begin{align*}
\BB=(1,0,0);\;\;R_e=1;\;\;\gamma=0.5;\;\;\alpha=1.
\end{align*}
The exact solution is chosen to be sufficiently smooth and is prescribed as follows:
\begin{align*}
\Bu&=(x^2+y^2+z^2-1)(y-z, z-x, x-y)^\top;\;\;
p=x^2+y^2+z^2-3/5;\\
\BJ&=(\sin y, \cos z, -x)^\top;\;\;
\phi=x^2+y^2+z^2-1;
\end{align*}
The corresponding source term $\Bf$ and $\Bg$ are derived accordingly.
For the spatial discretization, we employ second-order Taylor-Hood elements for the velocity $\Bu$ and the pressure $p$, and first-order parametric BDM elements for the current density $\BJ$; that is, we set $k=2$ in our implementation.
Firstly in Table \ref{tab:mesh}, we give the five meshes information and 3D view of meshes $\Ct_1^{(2)}$ and $\Ct_2^{(2)}$ are given in Fig. \ref{fig_order2grid}.
\begin{table}[!htbp]
  \centering
  \caption{Curved meshes and DOFs used in the convergence tests ($k=2$).}
  \label{tab:mesh}
    \begin{tabular}{ccccccc}
    \hline
  Mesh & $h_{\max}$  & $h_{\min}$ & Elements & DOFs of $\Bu_h$ & DOFs of $p_h$  & DOFs of $\BJ_h$  \cr
    \hline
   $\Ct_1^{(2)}$   &1.1281   &1.1281    &48    &375    &27    &360     \\
   $\Ct_2^{(2)}$  &0.9292   &0.5640     &384    &2187    &125    &2592     \\
   $\Ct_3^{(2)}$  &0.5971   &0.2820    &3072    &14739    &729    &19584     \\
   $\Ct_4^{(2)}$  &0.3347   &0.1410    &24576    &107811    &4913    &152064     \\
   $\Ct_5^{(2)}$  &0.1768   &0.0705    &196608    &823875    &35937    &1198080     \\
\hline
\end{tabular}
\end{table}

\begin{figure}[!htbp]
  \centering
  \includegraphics[width=0.35\textwidth]{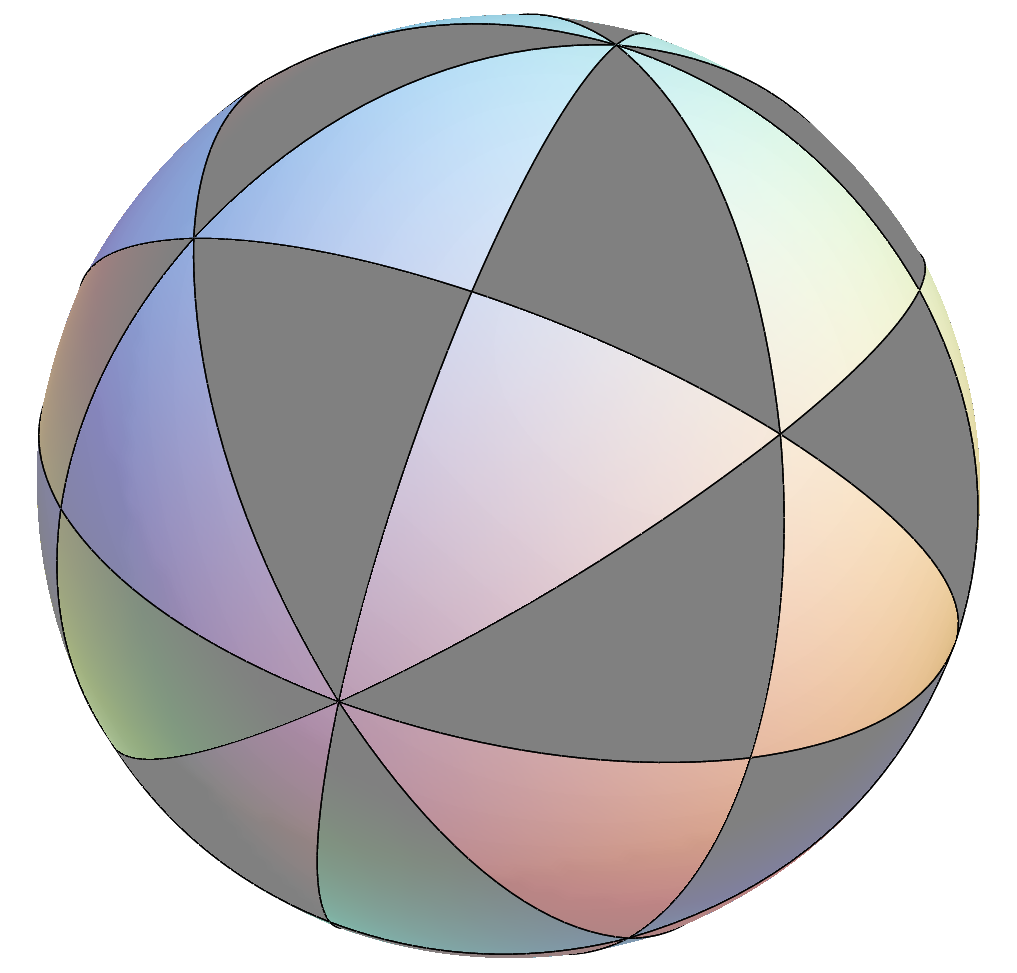}
  \includegraphics[width=0.36\textwidth]{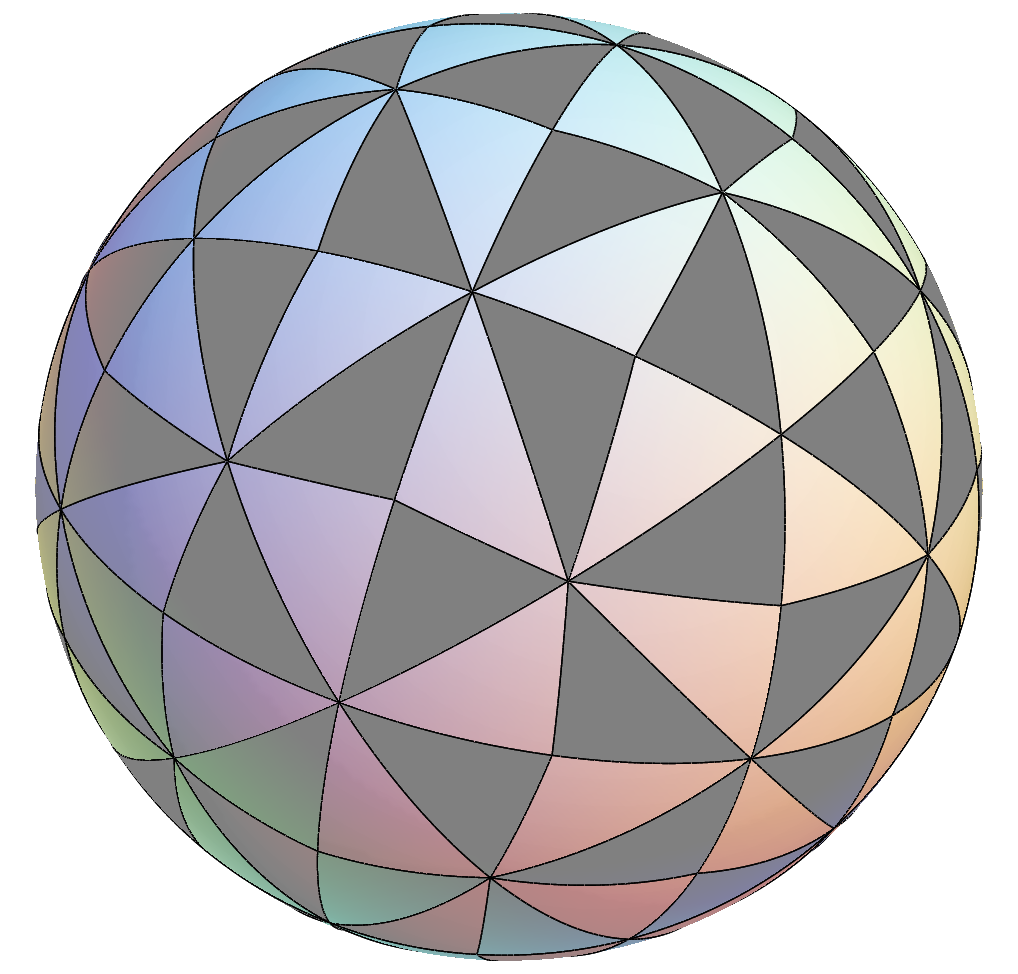}
  \caption{3D view of curved meshes $\Ct_1^{(2)}$ (left) and $\Ct_2^{(2)}$ (right).}\label{fig_order2grid}
\end{figure}

In Table \ref{tab:J-phi-straight}, we give the computed finite element errors of standard mixed finite element method \cite{lnz19}, namely the domain is approximated using
traditional straight meshes. Then in Table \ref{tab:J-phi-curve}, the finite element errors of the proposed parametric mixed finite element method in present work are shown.
We also plot the decreasing trend of the $\BL^2$-error for velocity  and current density in Fig. \ref{fig_err1}.
The $\BH^1$-error of $\Bu_h$ and $L^2$-error of $p_h$ are shown in Fig. \ref{fig_err2}. In 3D case, $h \sim N^{-1/3}$ and $N$ is the
number of degrees of freedoms of the corresponding physical field. From Fig. \ref{fig_err1} and Fig. \ref{fig_err2}, we know that for traditional method
\begin{equation}
\|\Bu-\hat{\Bu}_h\|_{\BL^2} \sim O(h^2),~~\|\BJ-\hat{\BJ}_h\|_{\BL^2} \sim O(h^{3/2}),~~\|\Div\hat{\BJ}_h\|_{L^2} \sim 0,
\end{equation}
\begin{equation}
\|\Bu-\hat{\Bu}_h\|_{\BH^1} \sim O(h^{3/2}),~~\|p-\hat{p}_h\|_{L^2} \sim O(h^{3/2}).
\end{equation}
Noting that $k=2$, namely although the divergence-free condition for $\BJ_h$ is satisfied,
the convergence rates are not optimal. While for our proposed parametric mixed finite element method, we have
\begin{equation*}
\|\Bu-\Bu_h\|_{\BL^2} \sim O(h^3),~~\|\BJ-\BJ_h\|_{\BL^2} \sim O(h^{2}),~~\|\Div\BJ_h\|_{L^2} \sim 0,
\end{equation*}
\begin{equation*}
\|\Bu-\Bu_h\|_{\BH^1} \sim O(h^{2}),~~\|p-p_h\|_{L^2} \sim O(h^{2}).
\end{equation*}
Namely the convergence rates are optimal and the discrete current density is also exactly divergence-free.
Those observations are in good agreement with Theorem \ref{the:main_error}, Theorem \ref{the:main_error1} and Lemma \ref{lem:div_J0}.
\begin{table}[!htbp]
  \centering
  \caption{Finite element errors of the standard mixed finite element method on straight meshes \cite{lnz19}.}
  \label{tab:J-phi-straight}
  {    \begin{tabular}{cccccc}
    \hline
 Mesh & $\|\Bu-\Bu_h\|_{\BL^2}$ & Order & $\|\BJ-\BJ_h\|_{\BL^2}$ & Order & $\|\Div\BJ_h\|_{L^2}$ \cr
    \hline
   $\widehat{\Ct}_1^{(2)}$  &3.2622e-01    &-     &2.4456e-01   &-       &1.5541e-13         \\
   $\widehat{\Ct}_2^{(2)}$  &1.0817e-01    &1.59  &9.5606e-02   &1.36    &3.2337e-13         \\
   $\widehat{\Ct}_3^{(2)}$  &2.7332e-02    &1.98  &2.9912e-02   &1.68    &1.2268e-13         \\
   $\widehat{\Ct}_4^{(2)}$  &6.6183e-03    &2.05  &9.4330e-03   &1.66    &3.2814e-13         \\
   $\widehat{\Ct}_5^{(2)}$  &1.6135e-03    &2.04  &3.1034e-03   &1.60    &7.4283e-13         \\
\hline
\end{tabular}
\\[3mm]
\begin{tabular}{ccccc}
    \hline
 Mesh & $\|\Bu-\Bu_h\|_{\BH^1}$   & Order & $\|p-p_h\|_{\BL^2}$  & Order \cr
    \hline
   $\widehat{\Ct}_1^{(2)}$   &4.4183e-01    &-     &2.0555e-01    &-       \\
   $\widehat{\Ct}_2^{(2)}$   &2.0448e-01    &1.11  &1.0348e-01    &0.99     \\
   $\widehat{\Ct}_3^{(2)}$   &7.8651e-02    &1.38  &4.0156e-02    &1.37     \\
   $\widehat{\Ct}_4^{(2)}$   &2.8828e-02    &1.45  &1.3601e-02    &1.56     \\
   $\widehat{\Ct}_5^{(2)}$   &1.0399e-02    &1.47  &4.2855e-03    &1.67     \\
\hline
\end{tabular}}
\end{table}

\begin{table}[!htbp]
  \centering
  \caption{Finite element errors of the proposed parametric mixed finite element method on curved meshes ($k=2$).}
  \label{tab:J-phi-curve}
  {  \begin{tabular}{cccccc}
    \hline
 Mesh & $\|\Bu-\Bu_h\|_{\BL^2}$ & Order & $\|\BJ-\BJ_h\|_{\BL^2}$ & Order & $\|\Div\BJ_h\|_{L^2}$ \cr
    \hline
   $\Ct_1^{(2)}$   &1.0476e-01    &-       &8.3757e-02  &-       &3.3235e-13         \\
   $\Ct_2^{(2)}$   &2.9791e-02    &1.81    &2.5082e-02  &1.74    &3.2982e-13         \\
   $\Ct_3^{(2)}$   &4.7740e-03    &2.64    &5.5435e-03  &2.18    &1.0841e-13         \\
   $\Ct_4^{(2)}$   &6.2814e-04    &2.93    &1.2129e-03  &2.19    &5.8018e-13         \\
   $\Ct_5^{(2)}$   &7.9277e-05    &2.99    &2.7963e-04  &2.12    &2.1900e-12         \\
\hline
\end{tabular}
\\[3mm]
\begin{tabular}{ccccc}
\hline
 Mesh & $\|\Bu-\Bu_h\|_{\BH^1}$   & Order & $\|p-p_h\|_{\BL^2}$  & Order \cr
    \hline
   $\Ct_1^{(2)}$  &2.7992e-01   &-     &1.0036e-01    &-         \\
   $\Ct_2^{(2)}$  &1.0047e-01   &1.48  &9.1572e-02    &0.13      \\
   $\Ct_3^{(2)}$  &2.9500e-02   &1.77  &2.3066e-02    &1.99      \\
   $\Ct_4^{(2)}$  &7.8687e-03   &1.91  &6.0426e-03    &1.93      \\
   $\Ct_5^{(2)}$  &2.0047e-03   &1.97  &1.6163e-03    &1.90      \\
\hline
\end{tabular}}
\end{table}

\begin{figure}[!htbp]
  \centering
  \includegraphics[width=0.95\textwidth]{uhJh.png}
  \caption{The decreasing trend of $\BL^2$-error of $\Bu_h$ and $\BL^2$-error of $\BJ_h$. The symbol with $\widehat{\Omega}_h$ indicates computation on straight mesh.}\label{fig_err1}
\end{figure}

\begin{figure}[!htbp]
  \centering
  \includegraphics[width=0.95\textwidth]{uhph.png}
  \caption{The decreasing trend of energy error of $\Bu_h$ and $L^2$-error of $p_h$. The symbol with $\widehat{\Omega}_h$ indicates computation on straight mesh. }\label{fig_err2}
\end{figure}

{
To verify the theoretical results for higher-order elements, we repeat the same
test with $k=3$, i.e., third-order Taylor-Hood elements for $\Bu$ and $p$,
and second-order parametric BDM elements for $\BJ$. Table~\ref{tab:dof_k3} lists the mesh information for this case, and the corresponding numerical errors are reported in Table~\ref{tab:J-phi-curve_k3}. The results demonstrate
\begin{equation*}
\|\Bu-\Bu_h\|_{\BL^2} \sim O(h^4),~~\|\BJ-\BJ_h\|_{\BL^2} \sim O(h^{3}),~~\|\Div\BJ_h\|_{L^2} \sim 0,
\end{equation*}
\begin{equation*}
\|\Bu-\Bu_h\|_{\BH^1} \sim O(h^{3}),~~\|p-p_h\|_{L^2} \sim O(h^{3}).
\end{equation*}
which are fully consistent with the optimal convergence rates in Theorem \ref{the:main_error}, Theorem \ref{the:main_error1} and Lemma \ref{lem:div_J0}.
}

\begin{table}[!htbp]
  \centering
{  \caption{Curved meshes and DOFs used in the convergence tests ($k=3$).}
  \label{tab:dof_k3}
    \begin{tabular}{ccccccc}
    \hline
  Mesh & $h_{\max}$  & $h_{\min}$ & Elements & DOFs of $\Bu_h$ & DOFs of $p_h$  & DOFs of $\BJ_h$  \cr
    \hline
   $\Ct_1^{(3)}$  &1.1281   &1.1281    &48     &1029     &125     &1008      \\
   $\Ct_2^{(3)}$  &0.9292   &0.5640    &384    &6591     &729     &7488      \\
   $\Ct_3^{(3)}$  &0.5971   &0.2820    &3072   &46875    &4913    &57600     \\
   $\Ct_4^{(3)}$  &0.3347   &0.1410    &24576  &352947   &35937   &451584     \\
\hline
\end{tabular}}
\end{table}

\begin{table}[!htbp]
  \centering
  {  \caption{Finite element errors of the proposed parametric mixed finite element method on curved meshes ($k=3$).}
   \label{tab:J-phi-curve_k3}
    \begin{tabular}{cccccc}
    \hline
Mesh & $\|\Bu-\Bu_h\|_{\BL^2}$ & Order & $\|\BJ-\BJ_h\|_{\BL^2}$ & Order & $\|\Div\BJ_h\|_{L^2}$ \cr
    \hline
   $\Ct_1^{(3)}$   &1.1065e-02 &-       &2.9152e-02  &-     &1.7329e-13         \\
   $\Ct_2^{(3)}$   &9.8165e-04 &3.49    &3.4303e-03  &3.09  &1.3160e-14         \\
   $\Ct_3^{(3)}$   &5.9388e-05 &4.05    &3.4396e-04  &3.32  &1.1532e-14         \\
   $\Ct_4^{(3)}$   &3.1021e-06 &4.26    &3.3182e-05  &3.37  &2.6662e-14         \\
\hline
\end{tabular}
\\[3mm]
\begin{tabular}{ccccccc}
\hline
 Mesh & $\|\Bu-\Bu_h\|_{\BH^1}$   & Order & $\|p-p_h\|_{\BL^2}$  & Order&  \cr
    \hline
   $\Ct_1^{(3)}$  &4.4909e-02 &-      &1.6900e-02  &-         \\
   $\Ct_2^{(3)}$  &7.1178e-03 &2.66   &1.4123e-02  &0.26       \\
   $\Ct_3^{(3)}$  &8.9639e-04 &2.99   &2.2112e-03  &2.68       \\
   $\Ct_4^{(3)}$  &9.3613e-05 &3.26   &2.4961e-04  &3.15       \\
\hline
\end{tabular}}
\end{table}
\end{example}

{\begin{example}
The computational domain is the spherical shell defined by
$$\DOmega=\{\Bx\in\mathbb{R}^3:0.4<|\Bx|<1\},$$
i.e., the unit ball with an open concentric ball of radius 0.4 removed.
The boundary conditions are
$$\Bu = 0,\;\;\textrm{on}\;\;|\Bx|=0.4\quad\text{and}\quad |\Bx|=1,$$
$$\phi = 0,\;\;\textrm{on}\;\;|\Bx|=0.4,\qquad\phi = 1,\;\;\textrm{on}\;\;|\Bx|=1.$$
The physical parameters are set to
$$\BB=(0,1,0),\;\; R_e=100,\;\; \gamma = 0.5,\;\; \alpha = 1.$$
The velocity field is discretised using isoparametric elements of degree $k=2$; other physical quantities are defined accordingly. The nonlinear system is solved using Picard iteration, and all relevant details are provided in the Appendix.

The imposed potential difference drives an electric current through the conducting fluid. The interaction of this current with the applied magnetic field produces a Lorentz force $\BJ\times\BB$, which drives fluid motion. The resulting velocity field, in turn, induces a current through the $\Bu\times\BB$ term in Ohm's law. Fig. \ref{fig:phy_dis} shows the distributions of the current density, velocity, electric potential, and pressure near the cross-section $z=0$. Fig. \ref{fig:velocity_dis} presents the velocity vector field near the plane $y=0$, together with the velocity magnitude on the planes $x=0$ and $y=0$. Since the magnetic field is oriented along the $y$-axis, the Lorentz force $\BJ\times\BB$ acts predominantly in directions perpendicular to $y$, driving fluid motion primarily in planes orthogonal to the magnetic field. Our numerical results actually capture this physical behaviour.

\begin{figure}[!htbp]
  \centering
{\includegraphics[width=0.48\textwidth]{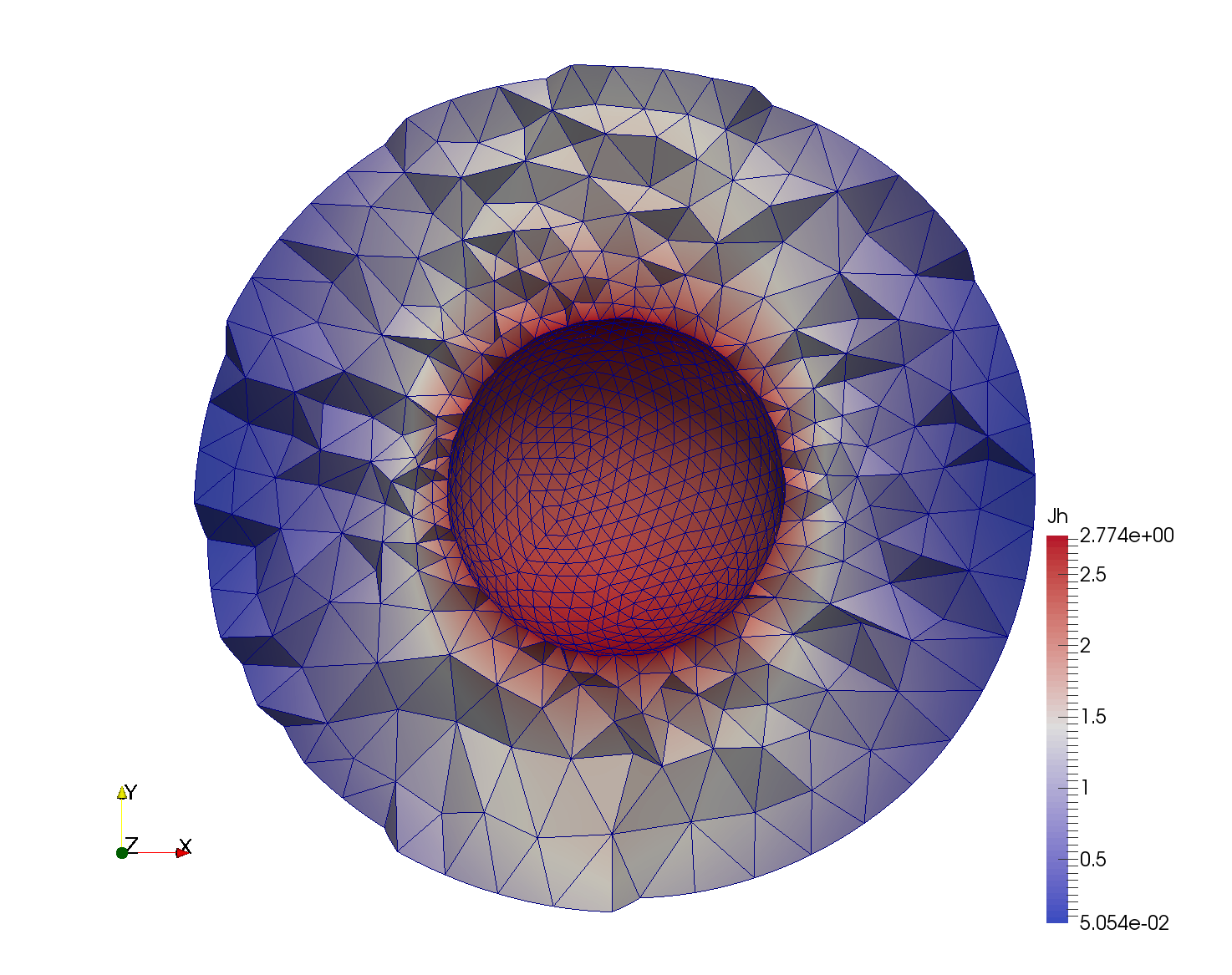}}
{\includegraphics[width=0.48\textwidth]{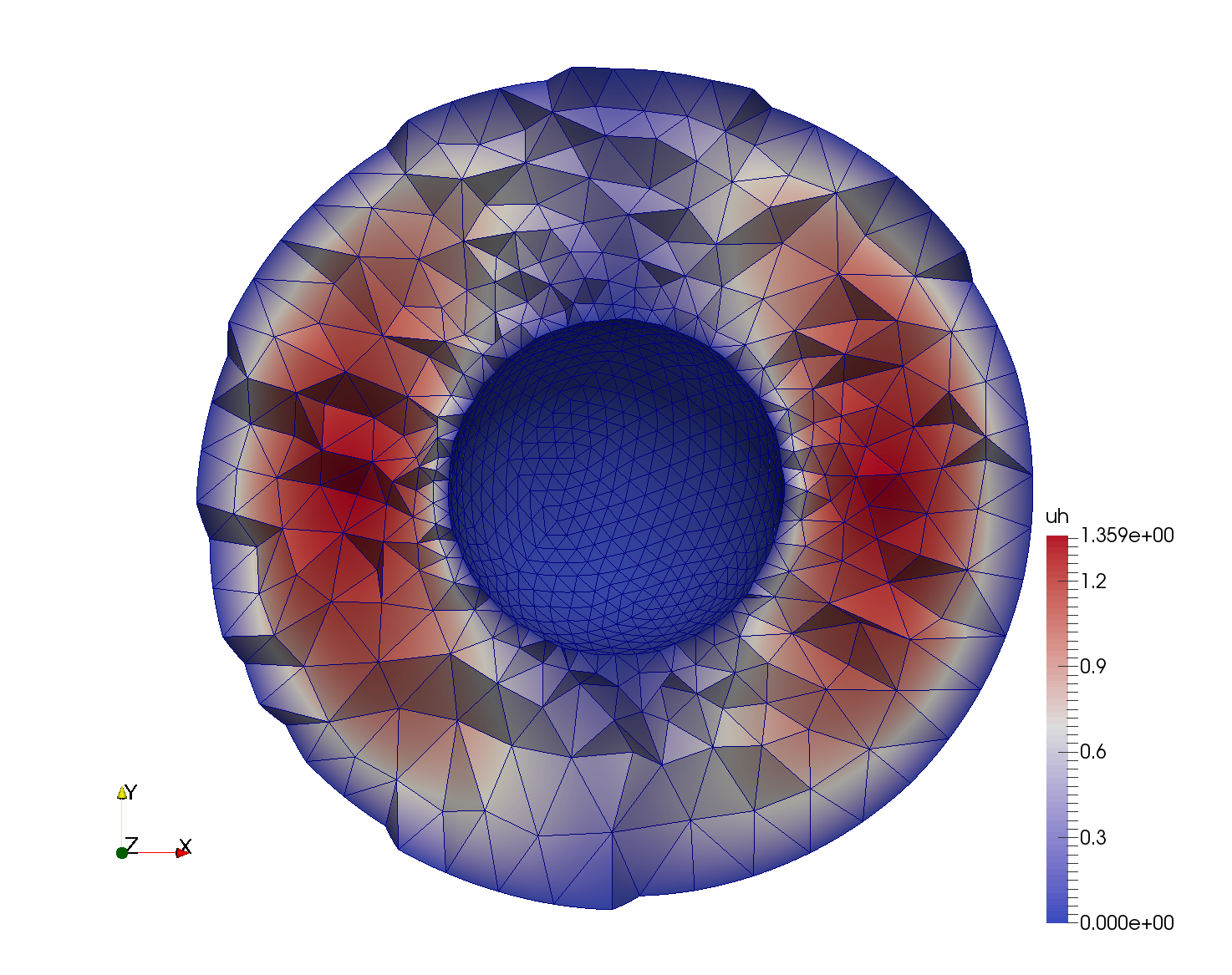}}\\
{\includegraphics[width=0.48\textwidth]{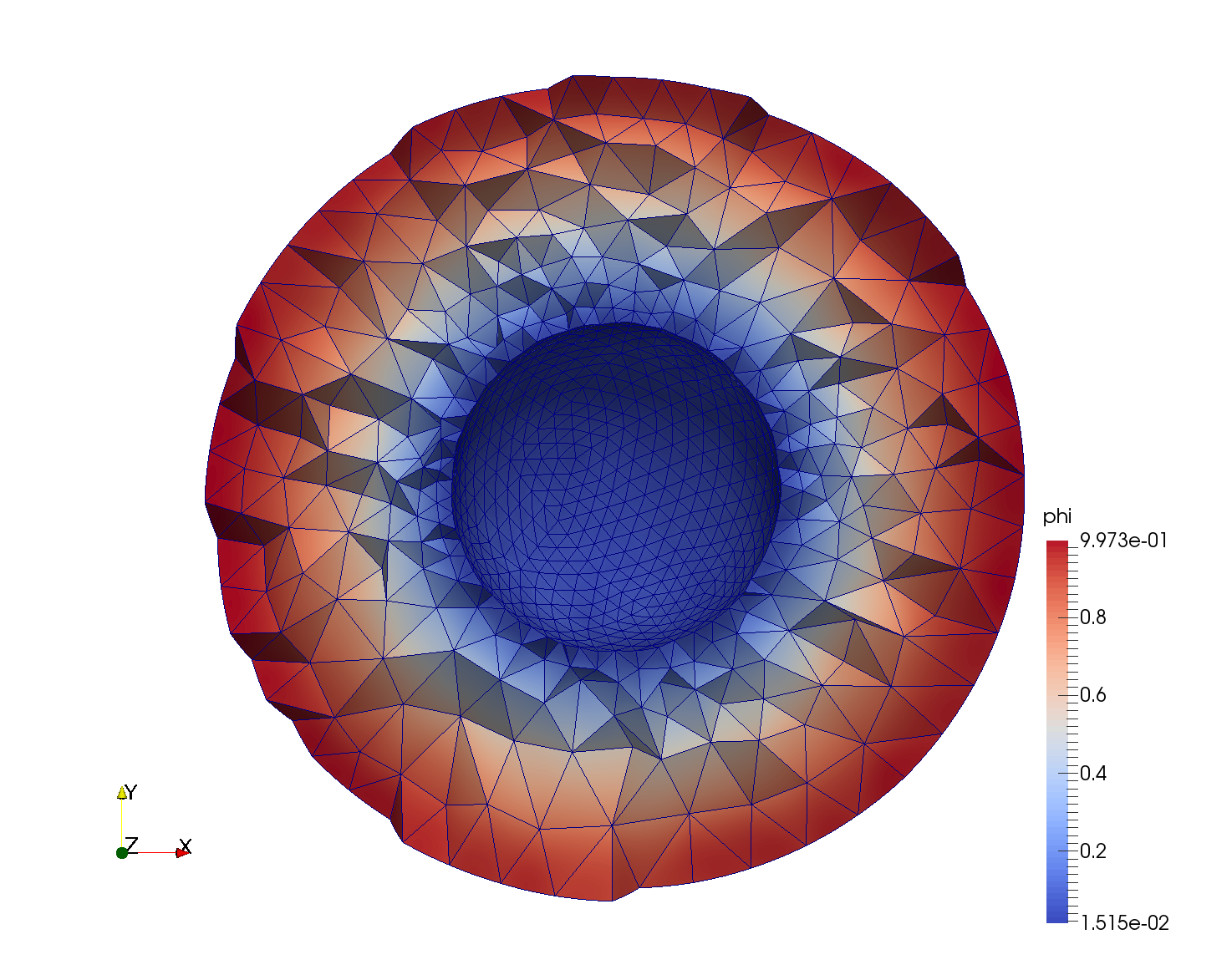}}
{\includegraphics[width=0.48\textwidth]{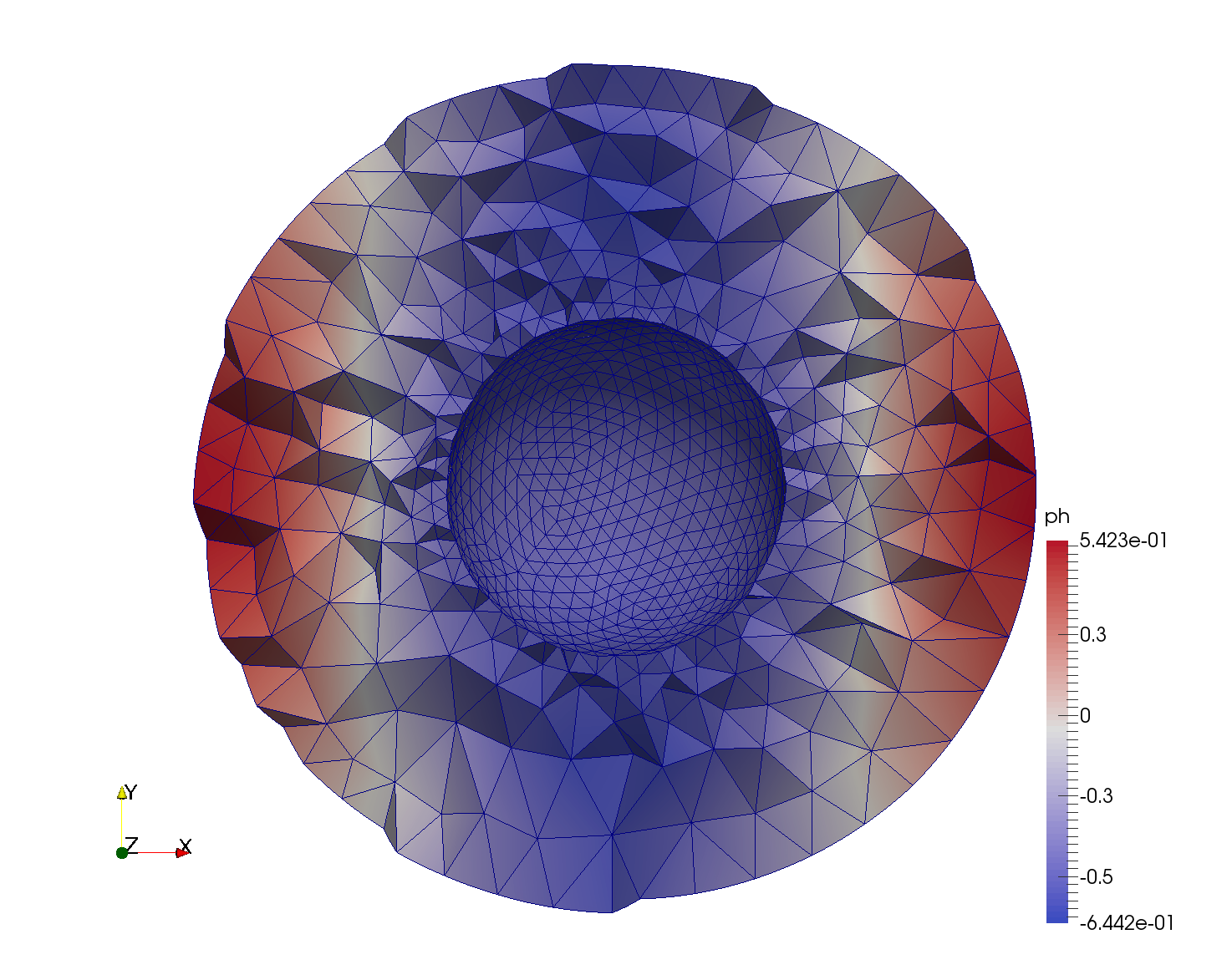}}
  \caption{Grid distribution and physical quantity distribution in the vicinity of cross-section $z=0$:
  Left top: current density $\BJ_h$; Right top velocity $\Bu_h$; Left down electric potential $\phi_h$;
  Right down: pressure $p_h$.}
  \label{fig:phy_dis}
\end{figure}

\begin{figure}[!htbp]
  \centering
  \includegraphics[width=0.48\textwidth]{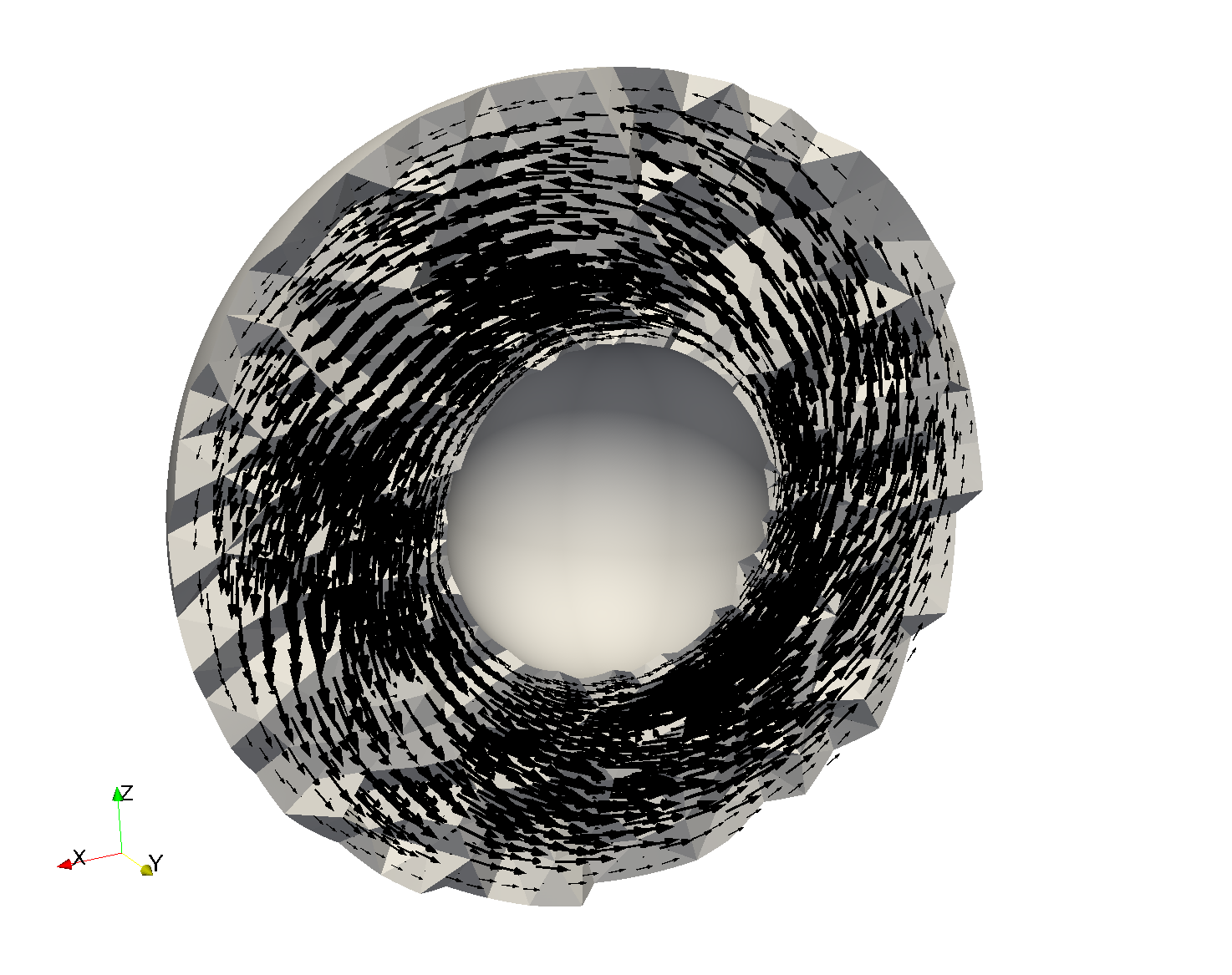}
  \includegraphics[width=0.48\textwidth]{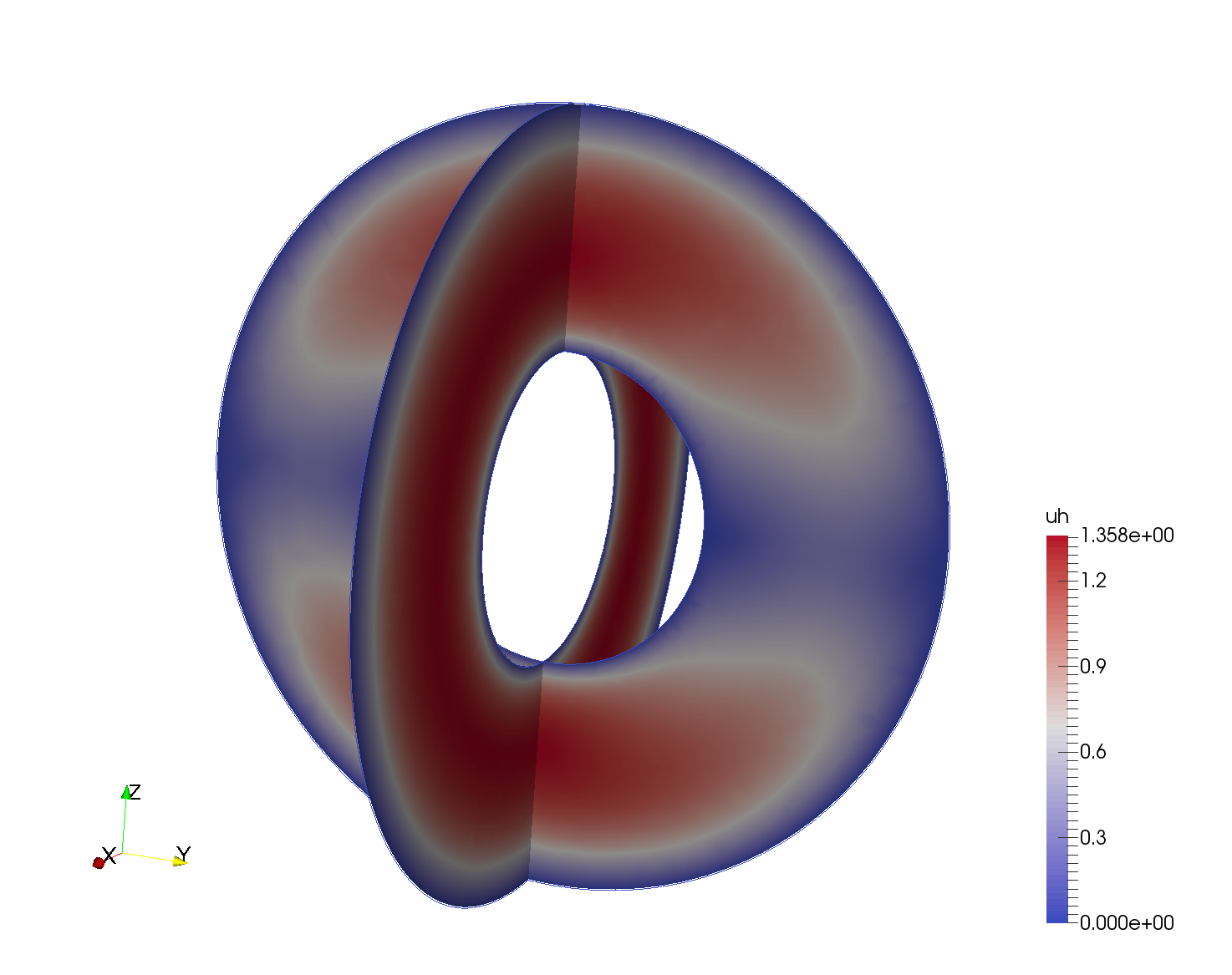}
  \caption{Velocity vector field in the vicinity of plane $y=0$ (left), velocity magnitude on the plane $x=0$ and $y=0$ (right).}\label{fig:velocity_dis}
\end{figure}
\end{example}
}
\section{Conclusion}
\label{sec:con}
This paper develops a parametric mixed finite element method with optimal convergence rates for the stationary incompressible inductionless MHD equations on 3D curved domains. The proposed method is charge-conservative in the sense that the discrete divergence-free constraint $\Div \BJ_h = 0$ is exactly preserved over the computational domain. We establish the well-posedness of both continuous and discrete variational formulations and prove the discrete inf-sup condition for the proposed scheme. Through a combination of geometric approximation error analysis and operator estimates, we derive optimal error bounds in the energy norm. Moreover, by using a Stokes projection, we also obtain an optimal $L^2$-norm error estimate for the velocity. Numerical experiments validate the theoretical results and demonstrate the accuracy and effectiveness of the proposed method. Several limitations of the present work should be noted. The analysis is restricted to stationary inductionless MHD and smooth curved domains, and uniqueness relies on small-data assumptions. Natural directions for future research include extensions to the full MHD system and time-dependent problems, as well as the treatment of non-smooth geometries and a refined study of the parameter dependence in the error estimates.

\vspace{5mm}

\section*{Appendix}
Since the discrete problem \eqref{eq:model_8st-0} contains a trilinear convection term, it constitutes a nonlinear system. We solve it by means of a Picard fixed point iteration, which linearizes the convective term while preserving the full coupling among all unknowns. The iteration is initialized with the zero guess.
Let $m\ge 1$ denote the iteration index. Given the previous iterates
\[(\Bu_h^{m-1}, p_h^{m-1}, \BJ_h^{m-1}, \phi_h^{m-1})\]
we replace $\Co(\Bu_h;\Bu_h,\Bv_h)$ with $\Co(\Bu_h^{m-1}; \Bu_h^m, \Bv_h)$. The linearized variational problem then reads: find $(\Bu_h^m, p_h^m, \BJ_h^m, \phi_h^m) \in \BV_{h,0}^k \times Q_h^{k-1} \times \BD_h^{k-1} \times S_h^{k-2}$ such that for all test functions $(\Bv_h, q_h, \Bd_h, \varphi_h) \in \BV_{h,0}^k \times Q_h^{k-1} \times \BD_h^{k-1} \times S_h^{k-2}$,
\begin{subequations}
\begin{align}
\Co(\Bu_h^{m-1}; \Bu_h^m, \Bv_h)_h + R_e^{-1}(\nabla\Bu_h^m, \nabla\Bv_h)_h + \gamma(\Div\Bu_h^m, \Div\Bv_h)_h \nn\\
 - (p_h^m, \Div\Bv_h)_h - \alpha (\BJ_h^m\times\BB, \Bv_h)_h = (\Bf_h, \Bv_h)_h, \label{eq:linear_mom} \\
(\BJ_h^m, \Bd_h)_h - (\phi_h^m, \Div\Bd_h)_h - (\Bu_h^m \times\BB, \Bd_h)_h = (\Bg_h, \Bd_h)_h, \label{eq:linear_ohm} \\
-(\Div\Bu_h^m, q_h)_h = 0, \label{eq:linear_cont} \\
-(\Div\BJ_h^m, \varphi_h)_h = 0. \label{eq:linear_charge}
\end{align}
\end{subequations}

The corresponding algebraic system takes the block form
\begin{equation*}
\mathbb{A}^{m} \xi^{m} = b^{m},
\end{equation*}
where $\xi^{m} = (\Bu_h^m, p_h^m, \BJ_h^m, \phi_h^m)^\top$ and
\begin{equation*}
\mathbb{A}^{m} =
\begin{pmatrix}
\BL_u + \BN_u^{m} & -(B_q)^\top & -\alpha\,\BC_J & 0\\
-B_q & 0 & 0 & 0\\
-\BC_u & 0 & \BQ_J & -(B_\phi)^\top\\
0 & 0 & -B_\phi & 0
\end{pmatrix}.
\end{equation*}
Here $\BL_u$ consists of the viscous and grad-div terms, $\BN_u^{m}$ is the convection operator, $\BC_u$ and $\BC_J$ are the Lorentz coupling matrices, $B_q$ and $B_\phi$ are the divergence constraint matrices for velocity and current density, and $\BQ_J$ is the mass matrix for $\BJ$. The right-hand side $b^{m}$ contains the source terms. The monolithic linear system is solved via a preconditioned Krylov subspace method with a relative residual $10^{-6}$.
The Picard iteration is terminated when the norm of the increments over all solution components is below a tolerance $10^{-8}$,
\begin{align*}
\|\Bu_h^m - \Bu_h^{m-1}\|_{\BL^2(\DOmega_h)} < 10^{-8},
\quad
\|p_h^m - p_h^{m-1}\|_{L^2(\DOmega_h)} < 10^{-8},\\
\|\BJ_h^m - \BJ_h^{m-1}\|_{\BL^2(\DOmega_h)} < 10^{-8},
\quad
\|\phi_h^m - \phi_h^{m-1}\|_{L^2(\DOmega_h)} < 10^{-8}.
\end{align*}
For simplicity, details of the preconditioner are omitted here and can be found in \cite{lz17,lnz19x}.

\input{refs}

\end{document}

%% file: macros.tex
\providecommand{\Div}{\operatorname{div}}          

\providecommand*{\Dist}[2]{\operatorname{dist}({#1};{#2})}   
\providecommand*{\Dist}[2]{\Dist{#1}{#2}}

\newcommand{\Bd}{{\boldsymbol{d}}}
\newcommand{\Be}{{\boldsymbol{e}}}
\newcommand{\Bf}{{\boldsymbol{f}}}
\newcommand{\Bg}{{\boldsymbol{g}}}

\newcommand{\Bj}{{\boldsymbol{j}}}

\newcommand{\Bn}{{\boldsymbol{n}}}

\newcommand{\Bu}{{\boldsymbol{u}}}
\newcommand{\Bv}{{\boldsymbol{v}}}
\newcommand{\Bw}{{\boldsymbol{w}}}
\newcommand{\Bx}{{\boldsymbol{x}}}

\newcommand{\hBd}{{\boldsymbol{\hat d}}}

\newcommand{\hBu}{{\boldsymbol{\hat u}}}
\newcommand{\hBv}{{\boldsymbol{\hat v}}}

\newcommand{\hBx}{{\boldsymbol{\hat x}}}

\newcommand{\tBf}{{\boldsymbol{\tilde f}}}
\newcommand{\tBg}{{\boldsymbol{\tilde g}}}

\newcommand{\tBu}{{\boldsymbol{\tilde u}}}
\newcommand{\tBv}{{\boldsymbol{\tilde v}}}

\newcommand{\hBJ}{{\boldsymbol{\hat J}}}

\newcommand{\tBJ}{{\boldsymbol{\tilde J}}}

\newcommand{\BB}{{\boldsymbol{B}}}
\newcommand{\BC}{{\boldsymbol{C}}}
\newcommand{\BD}{{\boldsymbol{D}}}

\newcommand{\BH}{{\boldsymbol{H}}}

\newcommand{\BJ}{{\boldsymbol{J}}}

\newcommand{\BL}{{\boldsymbol{L}}}
\newcommand{\BM}{{\boldsymbol{M}}}
\newcommand{\BN}{{\boldsymbol{N}}}

\newcommand{\BP}{{\boldsymbol{P}}}
\newcommand{\BQ}{{\boldsymbol{Q}}}

\newcommand{\BV}{{\boldsymbol{V}}}
\newcommand{\BW}{{\boldsymbol{W}}}

\newcommand{\Ca}{\mathcal{A}}
\newcommand{\Cb}{\mathcal{B}}

\newcommand{\Cd}{\mathcal{D}}

\newcommand{\Cf}{\mathcal{F}}

\newcommand{\Cl}{\mathcal{L}}

\newcommand{\Co}{\mathcal{O}}
\newcommand{\Cp}{\mathcal{P}}

\newcommand{\Cr}{\mathcal{R}}

\newcommand{\Ct}{\mathcal{T}}

\newcommand{\bbI}{\mathbb{I}}
\newcommand{\bbJ}{\mathbb{J}}

\newcommand{\bbN}{\mathbb{N}}

\providecommand*{\wh}[1]{\widehat{#1}}

\newcommand{\D}{\mathrm{d}}

\newcommand{\be}{\begin{eqnarray}}
\newcommand{\ee}{\end{eqnarray}}
\newcommand{\nn}{\nonumber}
\newcommand{\ben}{\begin{eqnarray*}}
\newcommand{\een}{\end{eqnarray*}}


%% file: refs.tex
\bibliographystyle{amsplain}

%% file: arXiv.bbl
\begin{thebibliography}{10}

\bibitem{rc23}
Aylwin, R., Jerez-Hanckes, C.:
Finite-element domain approximation for Maxwell variational problems on curved domains.
SIAM J. Numer. Anal. \textbf{61}(3), 1139-1171 (2023).
https://doi.org/10.1137/21M1468772

\bibitem{fg16}
Bertrand, F., Starke, G.:
Parametric Raviart-Thomas elements for mixed methods on domains with curved surfaces.
SIAM J. Numer. Anal. \textbf{54}(6), 3648-3667 (2016).
https://doi.org/10.1137/15M1045442


\bibitem{fsg14}
Bertrand, F., Munzenmaier, S., Starke, G.:
First-order system least squares on curved boundaries: High-order Raviart-Thomas elemetns.
SIAM J. Numer. Anal. \textbf{52}, 3165-3180 (2014).


\bibitem{MFEMbook}
Boffi, D., Brezzi, F., Fortin, M.:
Mixed finite element methods and applications.
Springer, Heidelberg (2013).
https://doi.org/10.1007/978-3-642-36519-5

\bibitem{TEbook}
Brenner, S.C., Scott, L.R.:
The mathematical theory of finite element methods, vol.~3.
Springer, New York (2008).
https://doi.org/10.1007/978-0-387-75934-0

\bibitem{clsw10}
Chen, G., Li, D., Sch\"{o}tzau, D., Wei, X.:
A mixed finite element method with exactly divergence-free velocities for incompressible magnetohydrodynamics.
Comput. Methods Appl. Mech. Engrg. \textbf{199}, 2840-2855 (2010).
https://doi.org/10.1016/j.cma.2010.05.007

\bibitem{cpg02}
Ciarlet, P.G.:
The finite element method for elliptic problems.
SIAM, Philadelphia (2002).
https://doi.org/10.1115/1.3424474

\bibitem{ch06}
Codina, R., Hern\'{a}ndez-Silva, N.:
Stabilized Finite Element Approximation of the Stationary Magneto-Hydrodynamics Equations.
Comput. Mech. \textbf{38}, 344-355 (2006).
https://doi.org/10.1007/s00466-006-0037-x

\bibitem{pa01}
Davidson, P.A.:
An introduction to magnetohydrodynamics.
Cambridge University Press, Cambridge (2001).
https://doi.org/10.1017/CBO9780511626333

\bibitem{fokd19}
Farrokhi, H., Otuya, D., Khimchenko, A., Dong, J.:
Magnetohydrodynamics in biomedical applications.
IntechOpen (2019).
https://doi.org/10.5772/intechopen.87109

\bibitem{gll06}
Gerbeau, J.F., LeBris, C., Leli\'{e}vre, T.:
Mathematical methods for the magnetohydrodynamics of liquid metals.
Oxford University Press, Oxford (2006).
https://doi.org/10.1093/acprof:oso/9780198566656.001.0001

\bibitem{NSbook}
Girault, V., Raviart, P.A.:
Finite element methods for Navier-Stokes equations.
Springer, New York (1986).
https://doi.org/10.1007/978-3-642-61623-5

\bibitem{jp19}
Goedbloed, H., Keppens, R., Poedts, S.:
Magnetohydrodynamics of laboratory and astrophysical plasmas.
Cambridge University Press, Cambridge (2019).
https://doi.org/10.1017/9781316403679

\bibitem{gmp91}
Gunzburger, M. D., Meir, A. J., Peterson, J. S.:
On the Existence, Uniqueness, and Finite Element Approximation of Solutions of the Equations of Stationary, Incompressible Magnetohydrodynamics.
Math. Comput. \textbf{56}, 523-563 (1991).
https://doi.org/10.1090/S0025-5718-1991-1066834-0

\bibitem{hlmz18}
Hiptmair, R., Li, L., Mao, S., Zheng, W.:
A fully divergence-free finite element method for magnetohydrodynamic equations.
Math. Models Methods Appl. Sci. \textbf{28}(4), 659-695 (2018).
https://doi.org/10.1142/S0218202518500173

\bibitem{jlmnr17}
John, V., Linke, A., Merdon, C., Neilan, M., Rebholz, L.G.:
On the divergence constraint in mixed finite element methods for incompressible flows.
SIAM Rev. \textbf{59}, 492-544 (2017).
https://doi.org/10.1137/15M1047696

\bibitem{ml86}
Lenoir, M.:
Optimal isoparametric finite elements and error estimates for domains involving curved boundaries.
SIAM J. Numer. Anal. \textbf{23}, 562-580 (1986).
https://doi.org/10.1137/0723036

\bibitem{lm89}
Lielpeteris, J., Moreau, R.:
Liquid metal magnetohydrodynamics.
Springer Dordrecht (1989).
https://doi.org/10.1007/978-94-009-0999-1

\bibitem{lz17}
Li, L., Zheng, W.:
A robust solver for the finite element approximation of stationary incompressible MHD equations in 3D,
J. Comput. Phys. \textbf{351}(15), 254-270 (2017).
https://doi.org/10.1016/j.jcp.2017.09.025

\bibitem{lnz19}
Li, L., Ni, M., Zheng, W.:
A charge-conservative Finite Element Method for inductionless MHD equations. Part I: Convergence.
SIAM J. Sci. Comput. \textbf{41}(4), B796-B815 (2019).
https://doi.org/10.1137/17M1160768

\bibitem{lnz19x}
Li, L., Ni, M., Zheng, W.:
A charge-conservative Finite Element Method for inductionless MHD equations. Part II: A robust solver.
SIAM J. Sci. Comput. \textbf{41}(4), B816--B842 (2019).
https://doi.org/10.1137/19M1260372

\bibitem{rm90}
Moreau, R.:
Magnetohydrodynamics.
Kluwer Academic Publishers, Dordrecht (1990).
https://doi.org/10.1007/978-94-015-7883-7

\bibitem{mn22}
Muir, H., Nikiforakis, N.:
Numerical modeling of imposed magnetohydrodynamic effects in hypersonic flows.
Phys. Fluids \textbf{34}(10), 107114 (2022).
https://doi.org/10.1063/5.0115424

\bibitem{nmh1}
Ni, M.J., Munipalli, R., Morley, N.B., Huang, P., Abdou, M.A.:
A current density conservative scheme for incompressible MHD flows at a low magnetic Reynolds number.
I. On a rectangular collocated grid system.
J. Comput. Phys. \textbf{227}(1), 174-204 (2007).
https://doi.org/10.1016/j.jcp.2007.07.025

\bibitem{nmh2}
Ni, M.J., Munipalli, R., Morley, N.B., Huang, P., Abdou, M.A.:
A current density conservative scheme for incompressible MHD flows at a low magnetic Reynolds number.
II. On an arbitrary collocated mesh.
J. Comput. Phys. \textbf{227}(1), 205-228 (2007).
https://doi.org/10.1016/j.jcp.2007.07.023

\bibitem{or04}
Olshanskii, M. A., Reusken, A.:
Grad-div stabilization for stokes equations.
Math. Comput. \textbf{73}, 1699-1718 (2004).

\bibitem{pbhn20}
Pamela, S.J.P., Bhole, A., Huijsmans, G.T.A., Nkonga, B., Hoelzl, M., Krebs, I., Strumberger, E.:
Extended full-MHD simulation of nonlinear instabilities in tokamak plasmas.
Phys. Plasmas \textbf{27}(10), 102510 (2020).
https://doi.org/10.1063/5.0018208

\bibitem{pjs88}
Peterson, J.S.:
On the finite element approximation of incompressible flows of an electrically conducting fluid.
Numer. Methods Partial Differ. Equ. \textbf{4}(1), 57-68 (1988).
https://doi.org/10.1002/num.1690040105

\bibitem{rp11}
Planas, R., Badia, S., Codina, R.:
Approximation of the inductionless MHD problem using a stabilized finite element method.
J. Comput. Phys. \textbf{230}(8), 2977-2996 (2011).
https://doi.org/10.1016/j.jcp.2010.12.046

\bibitem{rv21}
Ruas, V.:
Optimal-rate finite-element solution of Dirichlet problems in curved domains with straight-edged tetrahedra.
IMA J. of Numer. Anal. \textbf{41}(2), 1368-1410 (2021).
https://doi.org/10.1093/imanum/draa029

\bibitem{ds04}
Sch\"{o}tzau, D.:
Mixed finite element methods for stationary incompressible magnetohydrodynamics.
Numer. Math. \textbf{96}, 771-800 (2004).
https://doi.org/10.1007/s00211-003-0487-4

\bibitem{ss13}
Smolentsev, S., Vetcha, N., Abdou, M.:
Effect of a magnetic field on stability and transitions in liquid breeder flows in a blanket.
Fusion Eng. Des. \textbf{88}, 607-610 (2013).
https://doi.org/10.1016/j.fusengdes.2013.04.001

\bibitem{ZhangPHG}
Zhang, L.:
A parallel algorithm for adaptive local refinement of tetrahedral meshes using bisection.
Numer. Math. Theor. Meth. Appl. \textbf{2}, 65-89 (2009).
https://lsec.cc.ac.cn/phg/index.htm
\end{thebibliography}
